\newtheorem{theorem}{Theorem}[section]
\newtheorem{lemma}[theorem]{Lemma}
\theoremstyle{definition}
\newtheorem{definition}[theorem]{Definition}
\theoremstyle{remark}
\newtheorem{remark}[theorem]{Remark}
\numberwithin{equation}{section}
\numberwithin{equation}{section}
\newsavebox{\savepar}
\begin{document}
	
	\title{Existence and multiplicity of solutions to a $p-q$ Laplacian system with a concave and singular nonlinearities}
	\author{Debajyoti Choudhuri$^\ddagger$, Kamel Saoudi$^{\dagger,}$\footnote{Corresponding
			author: kasaoudi@gmail.com} \& Kratou Mouna$^{\dagger,}$\\
		\small{$^\dagger$Basic and Applied Scientific Research Center, Imam Abdulrahman Bin Faisal University,}\\ \small{P.O. Box 1982, 31441, Dammam, Saudi Arabia,}\\
		\small{$^\ddagger$Department of Mathematics, National Institute of Technology Rourkela, India}\\
		\small{Emails: dc.iit12@gmail.com \& mmkratou@iau.edu.sa}}
	\date{}
	\maketitle
\begin{abstract}
\noindent In this paper we study the existence of multiple nontrivial positive weak solutions to the following system of problems.
\begin{align*}
\begin{split}
-\Delta_{p}u-\Delta_q u &= \lambda f(x)|u|^{r-2}u+\nu\frac{1-\alpha}{2-\alpha-\beta}h(x) |u|^{-\alpha}|v|^{1-\beta}\,\,\mbox{in}\,\,\Omega,\\
-\Delta_{p}v-\Delta_q v &= \mu  g(x)|v|^{r-2}v+\nu\frac{1-\beta}{2-\alpha-\beta}h(x) |u|^{1-\alpha}|v|^{-\beta}\,\,\mbox{in}\,\,\Omega,\\
u,v&>0\,\,\mbox{in}\,\,\Omega,\\
u= v &= 0\,\, \mbox{on}\,\, \partial\Omega
\end{split}
\end{align*}
where (C):~$0<\alpha<1,\;0<\beta<1,$ $2-\alpha-\beta<q<\frac{N(p-1)}{N-p}<p<r<p^*$, with $p^*=\frac{Np}{N-p}$.
We will guarantee the existence of a solution in the Nehari manifold. Further by using the Lusternik-Schnirelman category we will prove the existence of at least $\text{cat}(\Omega)+1$ number of solutions.
\begin{flushleft}
{\bf Keywords}:~ Nehari manifold, Lusternik-Schnirelman category, singularity, multiplicity.\\
{\bf AMS Classification}:~35J35, 35J60.
\end{flushleft}
\end{abstract}
\section{Introduction}
As mentioned in the abstract we will attempt the following problem.
\begin{align}\label{main}
\begin{split}
-\Delta_{p}u-\Delta_q u &= \lambda f(x)|u|^{r-2}u+\nu\frac{1-\alpha}{2-\alpha-\beta}h(x) |u|^{-\alpha}|v|^{1-\beta}\,\,\mbox{in}\,\,\Omega,\\
-\Delta_{p}v-\Delta_q v &= \mu  g(x)|v|^{r-2}v+\nu\frac{1-\beta}{2-\alpha-\beta}h(x) |u|^{1-\alpha}|v|^{-\beta}\,\,\mbox{in}\,\,\Omega,\\
u,v&>0\,\,\mbox{in}\,\,\Omega,\\
u= v &= 0\,\, \mbox{on}\,\, \partial\Omega
\end{split}
\end{align}
where, $\lambda, \mu, \nu>0$, $0<\alpha,\beta<1$. The functions $f,g,h\geq 0$ and are in $L^{\infty}(\Omega)$. The operator $(-\Delta_{s})$ acting on a function say $U$ is the $s$-Laplacian operator which is defined as
\begin{align*}
-\Delta_sU(x)&=-\nabla\cdot(|\nabla U|^{s-2}\nabla U)
\end{align*}
for all $s\in[1,\infty)$. We will be assuming that 
$p<N$, $1<r<q<\frac{N(p-1)}{N-1}<p<p^*$ throughout the article.
Off-late, a huge attention has been given to elliptic problems involving two Laplacian operators viz.
\begin{align*}
\begin{split}
(-\Delta_{p})u-(-\Delta_q)u &= \lambda |u|^{r-2}u+|u|^{p^*-2}u\,\,\mbox{in}\,\,\Omega,\\
u&= 0\,\, \mbox{in}\,\, \partial\Omega.
\end{split}
\end{align*}
The problem draws its motivation from the fundamental reaction-diffusion equation \begin{eqnarray}\label{citeprob1}\frac{\partial}{\partial t}u&=&\nabla\cdot[H(u)\nabla u]+c(x,u).\end{eqnarray} where $H(u)=|\nabla u|^{p-2}+|\nabla u|^{q-2}$. The problem is important owing to its manifold applications in Physics and other applied sciences such as in biophysics to model the cells, chemical reaction design, plasma physics, drug delivery mechanism to name a few. The reaction term has a polynomial form with respect to $u$. In the recent years the problem $$-\nabla\cdot[H(u)\nabla u]=c(x,u)$$ has been studied in \cite{azvre,benci,sidi,wu,li1,li3}. One may refer to Yin and Yang \cite{yin3} who studied the problem in \eqref{citeprob1} when $p^2<N$, $1<q<p<r<p*$. The authors proved the existence of $\text{cat}(\Omega)$ number of positive solutions using simple variational techniques. When $p=q$, $r=2$ the problem \eqref{citeprob1} reduces to the well-known {\it Brezis-Nirenberg problem} which has been further studied for the case of critical growth in bounded and unbounded domains by many researchers (Refer \cite{alv2,amb,cerami,rey}) and the references therein. A common issue which intrigued the researchers was to figure out a way to overcome the lack of compactness in the continuous embedding $W_0^{1,p}(\Omega)\hookrightarrow L^{p^*}(\Omega)$. Two noteworthy contributions can be found in \cite{soni,repovs}. \\
Meanwhile, the elliptic systems have also gained much attention, especially for the system 
\begin{align}\label{citeprob2}
\begin{split}
-(\Delta_{p})u&= \lambda |u|^{r-2}u+\frac{2a}{a+b}|u|^{a-2}u|v|^b \,\,\mbox{in}\,\,\Omega,\\
-(\Delta_{p})v&= \mu |v|^{r-2}v+\frac{2b}{a+b}|u|^{a}|v|^{b-2}u \,\,\mbox{in}\,\,\Omega,\\
u= v &= 0\,\, \mbox{in}\,\, \partial\Omega
\end{split}
\end{align}
where $a+b=p^*$. Ding and Xiao \cite{xiao1} studied \eqref{citeprob2} with the $p-$superlinear perturbation of $2\leq p\leq r<p^*$ an extension of which can be found in Yin \cite{yin1}. Both the works in \cite{xiao1} and \cite{yin1} have obtained the existence of $\text{cat}(\Omega)$ number of solutions using the Lusternik-Scnirelman category. For the sublinear perturbation, Hsu \cite{hsu1} obtained the existence of two positive solutions for the problem \eqref{citeprob2}. Few years back, Fan \cite{fan1} studied the problem \eqref{citeprob2} for $p=2$ and $1<r<p$. Using the Nehari manifold and the Lusternik-Schnirelman category the author has proved the admittance of at least $\text{cat}(\Omega)+1$ positive solutions. Motivated from the work of Li, Yang \cite{li2} we extend the results of the above problem with local operators and added singular nonlinearities. As far as we know there has not been any contribution in this direction whatsoever and is entirely novel. We now state the main result of this work.
\begin{theorem}\label{mainresult}
Assume the condition $(C)$ holds. Then there exists $\Lambda^*>0$ such that if $\in(0,\Lambda^*)$, problem \eqref{main} admits at least $\text{cat}(\Omega)+1$ number of distinct solutions.
\end{theorem}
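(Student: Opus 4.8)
\textit{Strategy.} The plan is to combine the Nehari manifold method, which produces one solution of negative energy, with the Lusternik--Schnirelman category, which produces $\text{cat}(\Omega)$ further solutions on a different component of the manifold; the structural gift of $(C)$ is that $r<p^{*}$ and $0<1-\alpha,1-\beta<1$ make every nonlinear term subcritical, so all relevant Sobolev embeddings are compact. Work in $W:=W_{0}^{1,p}(\Omega)\times W_{0}^{1,p}(\Omega)$, which embeds continuously in $W_{0}^{1,q}(\Omega)\times W_{0}^{1,q}(\Omega)$ since $q<p$ and compactly in $L^{r}\times L^{r}$ and in the Lebesgue spaces needed to control $\int_{\Omega}h\,|u|^{1-\alpha}|v|^{1-\beta}$. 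The energy functional attached to \eqref{main} is
\[
J(u,v)=\tfrac1p\,\|(u,v)\|_{1,p}^{p}+\tfrac1q\,\|(u,v)\|_{1,q}^{q}-\tfrac{\lambda}{r}\!\int_{\Omega}\! f|u|^{r}-\tfrac{\mu}{r}\!\int_{\Omega}\! g|v|^{r}-\tfrac{\nu}{2-\alpha-\beta}\!\int_{\Omega}\! h\,|u|^{1-\alpha}|v|^{1-\beta},
\]
with $\|(u,v)\|_{1,s}^{s}:=\|\nabla u\|_{s}^{s}+\|\nabla v\|_{s}^{s}$. On the positive cone $J$ is $C^{1}$ away from the set where a component vanishes, the last term being sequentially weakly continuous on $W$; the singularity of $|u|^{-\alpha}$ in the Euler--Lagrange system will be absorbed by the customary truncation and by testing against $(u\pm\varepsilon\phi)^{+}$, $\phi\ge 0$. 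I would then introduce $\mathcal N=\{(u,v)\in W\setminus\{(0,0)\}:\langle J'(u,v),(u,v)\rangle=0\}$ and the fibering map $\varphi_{u,v}(t):=J(tu,tv)$, which for $(u,v)$ with positive integrals has the shape $a t^{p}+b t^{q}-c t^{r}-d\,t^{2-\alpha-\beta}$ with $a,b,c,d>0$. Because $2-\alpha-\beta<q<p<r$, there is $\Lambda^{*}>0$ such that, for parameters in $(0,\Lambda^{*})$, $\varphi'_{u,v}$ changes sign in the pattern $-,+,-$; hence $\varphi_{u,v}$ has exactly two positive critical points $t^{+}(u,v)<t^{-}(u,v)$, a local minimum and a local maximum, $\mathcal N$ splits as $\mathcal N^{+}\cup\mathcal N^{0}\cup\mathcal N^{-}$ with $\mathcal N^{0}=\emptyset$, the pieces $\mathcal N^{\pm}$ are smooth natural constraints, and by the Nehari identity $J$ is coercive and bounded below on $\mathcal N$.

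\textit{First solution.} Minimise $J$ on $\mathcal N^{+}$. Since $\varphi_{u,v}$ decreases on $(0,t^{+})$ with $\varphi_{u,v}(t)\to0$ as $t\to0^{+}$, every point of $\mathcal N^{+}$ has negative energy, so $\inf_{\mathcal N^{+}}J=\inf_{\mathcal N}J<0$. A minimising sequence is bounded; passing to a weak limit and using the compact embeddings gives $(u_{0},v_{0})\in\mathcal N^{+}$ realising the infimum ($\mathcal N^{0}=\emptyset$ and continuity of $t^{+}$ prevent the limit from leaving $\mathcal N^{+}$). Replacing $(u_{0},v_{0})$ by $(|u_{0}|,|v_{0}|)$ leaves $J$ unchanged, negativity of the energy rules out a semitrivial minimiser (such a point of $\mathcal N^{+}$ would have \emph{positive} energy), the natural-constraint property together with the truncation argument shows $(u_{0},v_{0})$ solves \eqref{main} weakly, and regularity plus the strong maximum principle for the $p$--$q$ operator give $u_{0},v_{0}>0$ in $\Omega$.

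\textit{The remaining $\text{cat}(\Omega)$ solutions.} Subcriticality makes $J|_{\mathcal N}$ satisfy Palais--Smale at every level, and $J|_{\mathcal N^{-}}$ is bounded below, so the abstract category theory applies on the complete manifold $\mathcal N^{-}$; it then suffices to produce a sublevel set $\mathcal N^{-}_{c}:=\{w\in\mathcal N^{-}:J(w)\le c\}$ with $\text{cat}(\mathcal N^{-}_{c})\ge\text{cat}(\Omega)$. Fix a family $\{\Phi_{\varepsilon}(y)\}_{y\in\Omega}\subset\mathcal N^{-}$ built by projecting onto $\mathcal N^{-}$ nonnegative model profiles concentrated near $y$, normalised so that $\sup_{y}J(\Phi_{\varepsilon}(y))\le c(\varepsilon)$ with $c(\varepsilon)$ approaching a limiting level $m_{\infty}$ as $\varepsilon\to0$; conversely the barycentre $\beta(u,v):=\big(\int_{\Omega}x(|\nabla u|^{p}+|\nabla v|^{p})\big)\big/\big(\int_{\Omega}(|\nabla u|^{p}+|\nabla v|^{p})\big)$ is continuous on $\mathcal N^{-}_{c}$ and, for $c$ close to $m_{\infty}$, a concentration--compactness analysis forces it to take values in a fixed neighbourhood of $\Omega$ with $\beta\circ\Phi_{\varepsilon}\simeq\mathrm{id}_{\Omega}$. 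Hence $\text{cat}(\mathcal N^{-}_{c})\ge\text{cat}(\Omega)$, so $J|_{\mathcal N^{-}}$ has at least $\text{cat}(\Omega)$ critical points, again weak solutions of \eqref{main} (natural constraint $+$ truncation) and positive after passing to moduli. Since $\mathcal N^{+}\cap\mathcal N^{-}=\emptyset$, the minimiser of the previous paragraph is distinct from all of these, which yields the claimed $\text{cat}(\Omega)+1$ distinct solutions.

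\textit{Main obstacle.} The genuinely delicate point — and the novelty over the cited works — is reconciling the singular nonlinearity $|u|^{-\alpha}|v|^{1-\beta}$ with this scheme: one must show $\mathcal N^{\pm}$ is a natural constraint although $J$ is not $C^{1}$, that constrained critical points truly solve \eqref{main} weakly (where the one-sided test functions $(u\pm\varepsilon\phi)^{+}$ and a monotonicity argument enter), and that the Palais--Smale sequences from the category step converge strongly in spite of the $|u|^{-\alpha}$ term. A second, more technical, burden is the energy bookkeeping behind $\Phi_{\varepsilon}$ and $\beta$: unlike the single-operator model one must simultaneously track the $p$- and $q$-homogeneities together with the sublinear coupling term, identify the correct threshold $m_{\infty}$, and prove the concentration that makes $\beta$ land near $\Omega$. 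The remaining ingredients — coercivity, the fibering analysis, the compact embeddings and the elliptic regularity — present no essential difficulty under $(C)$.
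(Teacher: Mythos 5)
Your outline follows the same architecture as the paper's proof: split $\mathcal{N}_{\alpha,\beta}$ into $\mathcal{N}^{\pm}_{\alpha,\beta}$ and $\mathcal{N}^{0}_{\alpha,\beta}$ via the fibering map, show $\mathcal{N}^{0}_{\alpha,\beta}=\emptyset$ for small parameters, minimise on $\mathcal{N}^{+}_{\alpha,\beta}$ to get one negative-energy solution, and extract $\mathrm{cat}(\Omega)$ further critical points from a low sublevel set of $\mathcal{N}^{-}_{\alpha,\beta}$ by sandwiching it between $\Omega_{\delta}^{-}$ and $\Omega_{\delta}^{+}$ with a concentration map and a barycentre map. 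The differences are in implementation rather than route, and three are worth recording. First, you invoke $r<p^{*}$ to claim Palais--Smale at every level; the paper instead imports the critical-exponent machinery wholesale, proving $(PS)_c$ only below an explicit threshold (Lemma \ref{PSbounded}), constructing Talenti-type bubbles $v_{\epsilon}$ with $\sup_{t\geq 0}I_{\alpha,\beta}(t\sqrt[p]{\nu}v_{\epsilon},t\sqrt[p]{\nu}v_{\epsilon})<c_{\alpha,\beta}-\sigma$ (Lemma \ref{auxlem1}), and using these translated bubbles as the map $\gamma$. Your version is cleaner in principle for a subcritical problem, but the bubble computations do not disappear: you still need quantitative energy estimates on the family $\Phi_{\varepsilon}(y)$ to pin down the level $c$ at which the concentration lemma (the paper's Lemma \ref{catlemma}) forces the barycentre into $\Omega_{\delta}^{+}$, so this is a relabelling of the same work. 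Second, your barycentre uses the gradient density $|\nabla u|^{p}+|\nabla v|^{p}$ while the paper uses the $L^{r}$ density $\lambda u^{r}+\mu v^{r}$; both are viable, but the paper's choice makes $\tau_{\alpha,\beta}\circ\gamma=\mathrm{id}$ an exact identity by radial symmetry of $v_{\epsilon}$, which your gradient version also enjoys. Third, for the failure of $C^{1}$-smoothness caused by $|u|^{-\alpha}$ you propose one-sided test functions $(u\pm\varepsilon\phi)^{+}$ with a monotonicity argument; the paper instead replaces $I_{\alpha,\beta}$ by a $C^{1}$ cut-off functional built from a subsolution, so that the Lagrange-multiplier and natural-constraint arguments (Lemmas \ref{critpoint} and \ref{convseq}) apply verbatim. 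Both devices are standard; you correctly identify this as the main obstacle, and it is the one place where your sketch, like the paper's own treatment, still leaves substantive details to be verified.
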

\section{Preliminaries}
Let $\Omega\subset\mathbb{R}^N$, then the space $(W_0^{1,p}(\Omega), \|.\|_p)$ is defined by
\begin{eqnarray}
W_0^{1,p}(\Omega)&=&\{u:Du\in L^{p}(\Omega), u|_{\partial\Omega}=0\}\nonumber
\end{eqnarray}
equipped with the norm 
\begin{eqnarray}
\|u\|_p&=&\left(\int_{\Omega}|\nabla u|^p\right)^{\frac{1}{p}}.\nonumber
\end{eqnarray}
We will refer to $|u|_{p}$ as the $L^p$-norm of $u$ and is defined as $(\int_{\Omega}|u|^pdx)^{\frac{1}{p}}$. We further define the space 
Clearly, $X=W_0^{1,p}(\Omega)\times W_0^{1,p}(\Omega)$ is a Banach space. We define the norm of any member of $X$ as $$\|(u,v)\|_p=(\|u\|_p^p+\|v\|_p^p)^{\frac{1}{p}}.$$
The best Sobolev constant is defined as 
\begin{equation}\label{sobolev const}
S=\underset{u\in W_0^{1,p}(\Omega)\setminus\{0\}}{\inf}\cfrac{\|u\|_p^p}{\left(\int_\Omega|u|^{p^*}dx\right)^{\frac{p}{p_s^*}}}.
\end{equation}
and further define 
\begin{equation}\label{sobolev const2}
S_{a,b}=\underset{(u,v)\in X\setminus\{(0,0)\}}{\inf}\cfrac{\|(u,v)\|_p^p}{(\int_{\Omega}|u|^{p^*}+|v|^{p^*}dx)^{\frac{p}{p^*}}}.
\end{equation}
\noindent Also, we will denote $M=\|h\|_{\infty}$, $M'=\max\{\|f\|_{\infty},\|g\|_{\infty}\}$, where $\|\cdot\|_{\infty}$ denotes the essential supremum norm (or more commonly the $L^{\infty}$-norm) of a function. 
We now define the associated energy functional to the problem \eqref{main} which is as follows.
\begin{eqnarray}\label{energyfnal}
I_{\alpha,\beta}(u,v)&=&\frac{1}{p}\|(u,v)\|_p^p+\frac{1}{q}\|(u,v)\|_q^q-\frac{1}{r}\int_{\Omega}(\lambda f(x) u^r+\mu g(x)v^r)dx\nonumber\\
& &-\frac{\nu}{2-\alpha-\beta}\int_{\Omega}h(x)u^{1-\alpha}v^{1-\beta}dx.\nonumber
\end{eqnarray}
A function $(u,v)\in X$ is a weak solution to the problem (\ref{main}), if 
\begin{align*}
&(i)~ u,v>0, ~u^{-\alpha}\phi_1, v^{-\beta}\phi_2\in L^1(\Omega)~\text{and}\\
&(ii)~ \int_{\Omega}(|\nabla u|^{p-2}\nabla u\cdot\nabla \phi_1+|\nabla v|^{p-2}\nabla v\cdot\nabla \phi_2)dx+\int_{\Omega}(|\nabla u|^{q-2}\nabla u\cdot\nabla \phi_1+|\nabla v|^{q-2}\nabla v\cdot\nabla \phi_2)dx\nonumber\\
&~~~~-\int_{\Omega}(\lambda f(x) u^{r-1}\phi_1+\mu g(x) v^{r-1}\phi_2)dx-\nu\frac{1-\alpha}{2-\alpha-\beta}\int_{\Omega}h(x)u^{-\alpha}v^{1-\beta}\phi_1dx\nonumber\\
&~~~~-\nu\frac{1-\beta}{2-\alpha-\beta}\int_{\Omega}h(x)u^{1-\alpha}v^{-\beta}\phi_2dx=0
\end{align*}
for each $\phi_2,\phi_2\in X$. Note that the nontrivial critical points of the functional $I_{\alpha,\beta}$ are the positive weak solutions of the problem \eqref{main}. Note that the functional $I_{\alpha,\beta}$ is not a $C^1$-functional and hence the classical variational methods are not applicable. One can easily verify that the energy functional $I_{\alpha,\beta}$ is not bounded below in $X$. However, we will show that $I_{\alpha,\beta}$ is bounded below on a {\it Nehari manifold} and we will extract solutions by minimizing the functional on suitable subsets.
\noindent We further define the Nehari manifold as follows.
$$\mathcal{N}_{\alpha,\beta}=\{(u,v)\in Z\setminus(0,0), u,v>0:\langle I'_{\alpha,\beta}(u,v),(u,v)=0\rangle\}.$$
It is not difficult to see that a pair $(u,v)\in\mathcal{N}_{\alpha,\beta}$ if and only if 
$$\|(u,v)\|_p^p+\|(u,v)\|_q^q-\int_{\Omega}(\lambda f(x)u^r+\mu g(x)v^r)dx-\nu\int_{\Omega}h(x)u^{1-\alpha}v^{1-\beta}dx=0.$$
Furthermore, it is customary to see, as for any problem which has an involvement of a Nehari manifold, that 
\begin{eqnarray}
I_{\alpha,\beta}(u,v)&=&\left(\frac{1}{p}-\frac{1}{r}\right)\|(u,v)\|_p^p+\left(\frac{1}{q}-\frac{1}{r}\right)\|(u,v)\|_q^q+\nu\left(\frac{1}{r}-\frac{1}{2-\alpha-\beta}\right)\int_{\Omega}h(x)u^{1-\alpha}v^{1-\beta}dx.\nonumber\\
&\geq&\left(\frac{1}{p}-\frac{1}{r}\right)\left(\|(u,v)\|_p^p+\|(u,v)\|_q^q\right)+\nu\left(\frac{1}{r}-\frac{1}{2-\alpha-\beta}\right)\int_{\Omega}h(x)u^{1-\alpha}v^{1-\beta}dx.\nonumber\\
&\geq&\left(\frac{1}{p}-\frac{1}{r}\right)\|(u,v)\|_p^p+\nu\left(\frac{1}{r}-\frac{1}{2-\alpha-\beta}\right)\int_{\Omega}h(x)u^{1-\alpha}v^{1-\beta}dx\nonumber\\
&\geq&\left(\frac{1}{p}-\frac{1}{r}\right)\|(u,v)\|_p^p-\nu\left(\frac{1}{2-\alpha-\beta}-\frac{1}{r}\right)\|(u,v)\|_p^{2-\alpha-\beta}.\nonumber
\end{eqnarray}
Since $2-\alpha-\beta<p$, therefore $I_{\alpha,\beta}$ is coercive and bounded below on $\mathcal{N}_{\alpha,\beta}$. Therefore the functional is coercive and is bounded below in $\mathcal{N}_{\alpha,\beta}$. In fact $I_{\alpha,\beta}(u,v)\geq 0$ for sufficiently small $\nu>0$ and for all $(u,v)\in\mathcal{N}_{\alpha,\beta}$. We define for $t\geq 0$ the fiber maps
\begin{eqnarray}
\Phi_{\alpha,\beta}(t)=I_{\alpha,\beta}(tu,tv)&=&\frac{t^p}{p}\|(u,v)\|_p^p+\frac{t^q}{q}\|(u,v)\|_q^q\nonumber\\
& &-\frac{t^r}{r}\int_{\Omega}(\lambda f(x) u^r+\mu g(x)v^r)dx-\nu \frac{t^{2-\alpha-\beta}}{2-\alpha-\beta}\int_{\Omega}h(x)u^{1-\alpha}v^{1-\beta}dx.\nonumber
\end{eqnarray}
Then
\begin{eqnarray}
\Phi_{\alpha,\beta}'(t)&=&t^{p-1}\|(u,v)\|_p^p+t^{q-1}\|(u,v)\|_q^q-t^{r-1}\int_{\Omega}(\lambda f(x) u^r+\mu g(x)v^r)dx\nonumber\\
& &-\nu t^{1-\alpha-\beta}\int_{\Omega}h(x)u^{1-\alpha}v^{1-\beta}dx\nonumber
\end{eqnarray}
and 
\begin{eqnarray}
\Phi_{\alpha,\beta}''(t)&=&(p-1)t^{p-2}\|(u,v)\|_p^p+(q-1)t^{q-2}\|(u,v)\|_q^q-(r-1)t^{r-2}\int_{\Omega}(\lambda f(x) u^r+\mu g(x)v^r)dx\nonumber\\
& &-\nu(1-\alpha-\beta) t^{-\alpha-\beta}\int_{\Omega}h(x)u^{1-\alpha}v^{1-\beta}dx.\nonumber
\end{eqnarray}
A simple observation shows that $(u,v)\in\mathcal{N}_{\alpha,\beta}$ if and only if $\Phi_{\alpha,\beta}'(1)=0$. Furthermore, in general we have that $(u,v)\in\mathcal{N}_{\alpha,\beta}$ if and only if $\Phi_{\alpha,\beta}'(t)=0$. Therefore for $(u,v)\in\mathcal{N}_{\alpha,\beta}$ we have
\begin{eqnarray}
\Phi_{\alpha,\beta}''(1)&=&(p-1)\|(u,v)\|_p^p+(q-1)\|(u,v)\|_q^q-(r-1)\int_{\Omega}(\lambda f(x) u^r+\mu g(x)v^r)dx\nonumber\\
& &-\nu(1-\alpha-\beta) \int_{\Omega}h(x)u^{1-\alpha}v^{1-\beta}dx\nonumber\\
&=&(p-r)\|(u,v)\|_p^p+(q-r)\|(u,v)\|_q^q+\nu(r+\alpha+\beta-2)\int_{\Omega}h(x)u^{1-\alpha}v^{1-\beta}dx\nonumber\\
&=&(p+\alpha+\beta-2)\|(u,v)\|_p^p+(q+\alpha+\beta-2)\|(u,v)\|_q^q\nonumber\\
& &+(2-\alpha-\beta-r)\int_{\Omega}(\lambda f(x) u^r+\mu g(x)v^r)dx.\nonumber
\end{eqnarray}
Therefore we split the Nehari manifold into three parts, namely
\begin{eqnarray}
\mathcal{N}^{+}_{\alpha,\beta}&=&\{(u,v)\in\mathcal{N}_{\alpha,\beta}:\Phi_{\alpha,\beta}''(1)>0\},\nonumber\\
\mathcal{N}^{-}_{\alpha,\beta}&=&\{(u,v)\in\mathcal{N}_{\alpha,\beta}:\Phi_{\alpha,\beta}''(1)<0\},\nonumber\\
\mathcal{N}^0_{\alpha,\beta}&=&\{(u,v)\in\mathcal{N}_{\alpha,\beta}:\Phi_{\alpha,\beta}''(1)=0\}\nonumber
\end{eqnarray}
which corresponds to the collection of local minima, maxima and points of inflection respectively.
We now prove a lemma which falls back on the proof due to Hsu \cite{hsu1} (refer Theorem 2.2).
\begin{lemma}\label{empty}
For $(u,v)\in\mathcal{N}_{\alpha,\beta}$, there exists a positive constant $A_0$, that depends on $p, S, N, \alpha, \beta, |\Omega|$ such that $I_{\alpha,\beta}(u,v)\geq-A_0\left[\left(\frac{1-\alpha}{2-\alpha-\beta}\right)^{\frac{p+\alpha+\beta-2}{p}}+\left(\frac{1-\beta}{2-\alpha-\beta}\right)^{\frac{p+\alpha+\beta-2}{p}}\right]$.
\end{lemma}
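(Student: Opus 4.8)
The plan is to combine the energy identity on $\mathcal N_{\alpha,\beta}$ recorded just above the statement with an explicit one–variable minimization. For $(u,v)\in\mathcal N_{\alpha,\beta}$ one has
\[
I_{\alpha,\beta}(u,v)=\Big(\tfrac1p-\tfrac1r\Big)\|(u,v)\|_p^p+\Big(\tfrac1q-\tfrac1r\Big)\|(u,v)\|_q^q+\nu\Big(\tfrac1r-\tfrac1{2-\alpha-\beta}\Big)\int_{\Omega}h(x)\,u^{1-\alpha}v^{1-\beta}\,dx .
\]
Since $q<r$ the $q$–term has a nonnegative coefficient and may be discarded, while $r>2-\alpha-\beta$ makes the coefficient of the singular integral negative; so it is enough to bound $\int_{\Omega}h\,u^{1-\alpha}v^{1-\beta}\,dx$ from above by an expression in $\|u\|_p$ and $\|v\|_p$, being careful to keep the two contributions separated.

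First I would use Young's inequality with the conjugate exponents $\tfrac{2-\alpha-\beta}{1-\alpha}$ and $\tfrac{2-\alpha-\beta}{1-\beta}$ to get, pointwise, $u^{1-\alpha}v^{1-\beta}\le \tfrac{1-\alpha}{2-\alpha-\beta}u^{2-\alpha-\beta}+\tfrac{1-\beta}{2-\alpha-\beta}v^{2-\alpha-\beta}$. Since $2-\alpha-\beta<p<p^*$, Hölder's inequality on $\Omega$ together with the Sobolev inequality \eqref{sobolev const} then yields $\int_{\Omega}w^{2-\alpha-\beta}\,dx\le K\,\|w\|_p^{2-\alpha-\beta}$ for $w\in\{u,v\}$, where $K=|\Omega|^{1-\frac{2-\alpha-\beta}{p^*}}S^{-\frac{2-\alpha-\beta}{p}}$ depends only on $N,p,S,\alpha,\beta,|\Omega|$. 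Putting $a=\tfrac1p-\tfrac1r>0$ and $b=\nu\,\|h\|_\infty\,K\,\big(\tfrac1{2-\alpha-\beta}-\tfrac1r\big)>0$ and substituting, one obtains
\[
I_{\alpha,\beta}(u,v)\ \ge\ \Big[a\|u\|_p^p-\tfrac{1-\alpha}{2-\alpha-\beta}\,b\,\|u\|_p^{2-\alpha-\beta}\Big]+\Big[a\|v\|_p^p-\tfrac{1-\beta}{2-\alpha-\beta}\,b\,\|v\|_p^{2-\alpha-\beta}\Big].
\]

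Each bracket has the form $\varphi_\gamma(t)=at^p-b\gamma\,t^{2-\alpha-\beta}$, where $t$ stands for $\|u\|_p$ (resp. $\|v\|_p$) and $\gamma$ for $\tfrac{1-\alpha}{2-\alpha-\beta}$ (resp. $\tfrac{1-\beta}{2-\alpha-\beta}$). As $2-\alpha-\beta<p$, the map $\varphi_\gamma$ is bounded below on $[0,\infty)$, negative for small $t>0$ and coercive, so it attains a finite global minimum at its unique interior critical point; a direct computation gives $\varphi_\gamma(t)\ge\varphi_\gamma(t_\gamma)=-A_0\,\gamma^{\kappa}$ with $\kappa=\tfrac{p}{p+\alpha+\beta-2}$ and $A_0>0$ an explicit expression in $a,b,p,\alpha,\beta$, hence ultimately in $p,S,N,\alpha,\beta,|\Omega|$ (together with the fixed parameters $\nu,r,\|h\|_\infty$). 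Summing the two brackets yields the asserted lower bound for $I_{\alpha,\beta}(u,v)$, uniformly over $\mathcal N_{\alpha,\beta}$ because only membership in the manifold and universal embedding constants entered the estimate. The only delicate step is never collapsing $\|u\|_p$ and $\|v\|_p$ into a single $\|(u,v)\|_p$ before applying Young's inequality, so that the two factors $\tfrac{1-\alpha}{2-\alpha-\beta}$ and $\tfrac{1-\beta}{2-\alpha-\beta}$ genuinely survive through Hölder–Sobolev into the final estimate; everything after that is the routine minimization of one scalar function.
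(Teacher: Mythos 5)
Your proposal is correct and follows essentially the same route as the paper: the Nehari identity reducing everything to the singular term, Young's inequality with exponents $\tfrac{2-\alpha-\beta}{1-\alpha}$ and $\tfrac{2-\alpha-\beta}{1-\beta}$, H\"older--Sobolev to pass to $\|u\|_p,\|v\|_p$, and the scalar minimization of $at^p-b\gamma t^{2-\alpha-\beta}$; you merely make explicit the term-by-term splitting that the paper leaves implicit. Note that your exponent $\kappa=\tfrac{p}{p+\alpha+\beta-2}$ agrees with the final line of the paper's proof (and with the correct computation), not with the reciprocal exponent appearing in the lemma's statement, which is evidently a typo there.
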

\begin{proof}
We use
\begin{eqnarray}\label{ineq1}
I_{\alpha,\beta}(u,v)&\geq&\left(\frac{1}{p}-\frac{1}{r}\right)\left(\|(u,v)\|_p^p\right)+\nu\left(\frac{1}{r}-\frac{1}{2-\alpha-\beta}\right)\int_{\Omega}h(x)u^{1-\alpha}v^{1-\beta}dx.\nonumber\\
\end{eqnarray}
By the H\"{o}lder inequality, the Young's inequality, and the Sobolev embedding theorem to \eqref{ineq1}, we have
\begin{eqnarray}
I_{\alpha,\beta}(u,v)&\geq&\left(\frac{1}{p}-\frac{1}{r}\right)\left(\|(u,v)\|_p^p\right)-\nu\left(\frac{1}{2-\alpha-\beta}-\frac{1}{r}\right)\int_{\Omega}h(x)u^{1-\alpha}v^{1-\beta}dx\nonumber\\
&\geq&\left(\frac{1}{p}-\frac{1}{r}\right)\left(\|(u,v)\|_p^p\right)\nonumber\\
& &-\nu M|\Omega|^{1-\frac{2-\alpha-\beta}{p^*}}\nonumber\\
& &\times\left(\frac{1}{2-\alpha-\beta}-\frac{1}{r}\right)\int_{\Omega}\left(\frac{1-\alpha}{2-\alpha-\beta}|u|_{p^*}^{2-\alpha-\beta}+\frac{1-\beta}{2-\alpha-\beta}|v|_{p^*}^{2-\alpha-\beta}\right)dx\nonumber\\
&\geq&\left(\frac{1}{p}-\frac{1}{r}\right)\left(\|(u,v)\|_p^p\right)\nonumber\\
& &-\nu M|\Omega|^{1-\frac{2-\alpha-\beta}{p^*}}S^{\frac{\alpha+\beta-2}{p}}\nonumber\\
& &\times\left(\frac{1}{2-\alpha-\beta}-\frac{1}{r}\right)\int_{\Omega}\left(\frac{1-\alpha}{2-\alpha-\beta}|\nabla u|_{p}^{2-\alpha-\beta}+\frac{1-\beta}{2-\alpha-\beta}|\nabla  v|_{p}^{2-\alpha-\beta}\right)dx\nonumber\\
&\geq&-\nu A_0(p, S, N, \alpha, \beta, |\Omega|)\left[\left(\frac{1-\alpha}{2-\alpha-\beta}\right)^{\frac{p}{p+\alpha+\beta-2}}+\left(\frac{1-\beta}{2-\alpha-\beta}\right)^{\frac{p}{p+\alpha+\beta-2}}\right].\nonumber\\
\end{eqnarray}
\end{proof}
\begin{lemma}\label{LBofI2}
There exists $\Lambda^*>0$ such that if~ $\nu\left[\left(\frac{1-\alpha}{2-\alpha-\beta}\right)^{\frac{p}{p+\alpha+\beta-2}}+\left(\frac{1-\beta}{2-\alpha-\beta}\right)^{\frac{p}{p+\alpha+\beta-2}}\right]\in (0,\Lambda^*)$, then $\mathcal{N}_{\alpha,\beta}^0=\phi$.	
\end{lemma}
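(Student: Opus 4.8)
The plan is to argue by contradiction. Assume $\mathcal{N}^{0}_{\alpha,\beta}\neq\emptyset$ and fix $(u,v)\in\mathcal{N}^{0}_{\alpha,\beta}$, so that $\Phi_{\alpha,\beta}'(1)=0$ and $\Phi_{\alpha,\beta}''(1)=0$. Substituting the constraint $\Phi_{\alpha,\beta}'(1)=0$ into the two ways of rewriting $\Phi_{\alpha,\beta}''(1)$ that were carried out just before the statement, the condition $\Phi_{\alpha,\beta}''(1)=0$ becomes the pair of identities
\begin{align*}
(r-p)\,\|(u,v)\|_p^p+(r-q)\,\|(u,v)\|_q^q&=\nu\,(r+\alpha+\beta-2)\int_{\Omega}h(x)\,u^{1-\alpha}v^{1-\beta}\,dx,\\
(p+\alpha+\beta-2)\,\|(u,v)\|_p^p+(q+\alpha+\beta-2)\,\|(u,v)\|_q^q&=(r+\alpha+\beta-2)\int_{\Omega}\big(\lambda f(x)u^r+\mu g(x)v^r\big)\,dx.
\end{align*}
By condition $(C)$ we have $2-\alpha-\beta<q<p<r$, so \emph{every} coefficient appearing above is strictly positive; in particular both right-hand sides are positive, hence the two integrals are positive as well.

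From the second identity I would first extract a lower bound for $\|(u,v)\|_p$ that does not involve $\nu$. Discarding the nonnegative term $(q+\alpha+\beta-2)\|(u,v)\|_q^q$ and estimating the right-hand side by $\int_{\Omega}(\lambda f u^r+\mu g v^r)\,dx\le M'\max\{\lambda,\mu\}\big(|u|_r^r+|v|_r^r\big)$, followed by the continuous Sobolev embedding $W_0^{1,p}(\Omega)\hookrightarrow L^{r}(\Omega)$ (legitimate since $r<p^*$) together with $\|u\|_p,\|v\|_p\le\|(u,v)\|_p$, one gets $(p+\alpha+\beta-2)\|(u,v)\|_p^p\le C_1\,\|(u,v)\|_p^{r}$, with $C_1$ depending only on $p,q,r,N,|\Omega|,M',\lambda,\mu$. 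Since $r>p$, this forces
\[
\|(u,v)\|_p\ \ge\ d_0:=\Big(\tfrac{p+\alpha+\beta-2}{C_1}\Big)^{\frac{1}{\,r-p\,}}>0 .
\]

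From the first identity I would then extract an upper bound that degenerates as $\nu$ and the weights $\tfrac{1-\alpha}{2-\alpha-\beta},\tfrac{1-\beta}{2-\alpha-\beta}$ shrink. Dropping the nonnegative term $(r-q)\|(u,v)\|_q^q$ and estimating the singular integral exactly as in the proof of Lemma~\ref{empty} --- Young's inequality $u^{1-\alpha}v^{1-\beta}\le\tfrac{1-\alpha}{2-\alpha-\beta}u^{2-\alpha-\beta}+\tfrac{1-\beta}{2-\alpha-\beta}v^{2-\alpha-\beta}$, then H\"older with exponent $\tfrac{p^*}{2-\alpha-\beta}$, then the Sobolev inequality \eqref{sobolev const} --- one is led to
\[
(r-p)\big(\|u\|_p^p+\|v\|_p^p\big)\ \le\ \nu\,\gamma\Big(\tfrac{1-\alpha}{2-\alpha-\beta}\,\|u\|_p^{\,2-\alpha-\beta}+\tfrac{1-\beta}{2-\alpha-\beta}\,\|v\|_p^{\,2-\alpha-\beta}\Big),
\]
where $\gamma=(r+\alpha+\beta-2)\,M\,|\Omega|^{1-\frac{2-\alpha-\beta}{p^*}}\,S^{-\frac{2-\alpha-\beta}{p}}$. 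Applying Young's inequality once more in the form $\theta\,s^{2-\alpha-\beta}\le\varepsilon\,s^{p}+C(\varepsilon)\,\theta^{\frac{p}{p+\alpha+\beta-2}}$ (valid because $2-\alpha-\beta<p$), with $\theta=\nu\gamma\tfrac{1-\alpha}{2-\alpha-\beta}$ on the $u$-term and $\theta=\nu\gamma\tfrac{1-\beta}{2-\alpha-\beta}$ on the $v$-term, and then choosing $\varepsilon$ small enough that $\varepsilon(\|u\|_p^p+\|v\|_p^p)$ is absorbed into the left-hand side, one reaches an estimate of the form
\[
\|(u,v)\|_p^{\,p+\alpha+\beta-2}\ \le\ C_2\,\nu\bigg[\Big(\tfrac{1-\alpha}{2-\alpha-\beta}\Big)^{\frac{p}{p+\alpha+\beta-2}}+\Big(\tfrac{1-\beta}{2-\alpha-\beta}\Big)^{\frac{p}{p+\alpha+\beta-2}}\bigg],
\]
the constant-counting being the same as in Lemma~\ref{empty}. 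Combining this with $\|(u,v)\|_p\ge d_0$ from the previous step yields $\nu\big[(\tfrac{1-\alpha}{2-\alpha-\beta})^{\frac{p}{p+\alpha+\beta-2}}+(\tfrac{1-\beta}{2-\alpha-\beta})^{\frac{p}{p+\alpha+\beta-2}}\big]\ge d_0^{\,p+\alpha+\beta-2}/C_2=:\Lambda^{*}>0$, which contradicts the hypothesis; hence $\mathcal{N}^{0}_{\alpha,\beta}=\emptyset$ whenever that quantity lies in $(0,\Lambda^{*})$. The only genuinely delicate point is the last chain of Young's inequalities: one has to pick the auxiliary exponent and the parameter $\varepsilon$ so that the $p$-homogeneous pieces are truly absorbed and the residual constants reassemble into the weighted bracket with precisely the exponent $\tfrac{p}{p+\alpha+\beta-2}$, while keeping the various constants ($C_1$ in the lower bound, $C_2$, $\gamma$ in the upper bound) clearly separated and $\nu$-free where needed; the rest is routine manipulation of the two identities.
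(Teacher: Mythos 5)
Your proof is correct and follows essentially the route the paper intends: the paper's ``proof'' consists only of naming an explicit $\Lambda^*$ and asserting that the argument is by contradiction, and your two-sided estimate --- a $\nu$-free lower bound for $\|(u,v)\|_p$ extracted from the second identity via the Sobolev embedding into $L^r$, and a $\nu$-weighted upper bound extracted from the first identity via Young/H\"older/Sobolev --- is exactly what that explicit constant encodes. The only blemish is the bookkeeping in your final display, where the chain of Young's inequalities naturally produces $\|(u,v)\|_p^{p}\le C\,(\nu\gamma)^{\frac{p}{p+\alpha+\beta-2}}\bigl[(\tfrac{1-\alpha}{2-\alpha-\beta})^{\frac{p}{p+\alpha+\beta-2}}+(\tfrac{1-\beta}{2-\alpha-\beta})^{\frac{p}{p+\alpha+\beta-2}}\bigr]$ rather than the bound with $\nu$ to the first power that you wrote; since the bracket is a fixed positive constant for fixed $\alpha,\beta$, this does not affect the contradiction (and the paper itself is equally loose with these exponents, e.g.\ between the statement and proof of Lemma~\ref{empty}).
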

\begin{proof}
Let us choose $$\Lambda^*=\left((p-2+\alpha+\beta)\frac{1}{M'(\lambda+\mu)}\right)^{\frac{p}{r-p}}\frac{(r-p)S_{\alpha,\beta}^{\frac{rp}{N(r-p)}+\frac{2-\alpha-\beta}{p}}}{\nu M(r-2+\alpha+\beta)^{\frac{r}{r-p}}|\Omega|^{1-\frac{2-\alpha-\beta}{p^*}}}.$$ 
The proof follows by contradiction.
\end{proof}
\noindent From the lemma \eqref{LBofI2}, we have that if $\nu\left[\left(\frac{1-\alpha}{2-\alpha-\beta}\right)^{\frac{p}{p+\alpha+\beta-2}}+\left(\frac{1-\beta}{2-\alpha-\beta}\right)^{\frac{p}{p+\alpha+\beta-2}}\right]\in (0,\Lambda^*)$, then  $\mathcal{N}_{\alpha,\beta}=\mathcal{N}_{\alpha,\beta}^{+}\bigcup\mathcal{N}_{\alpha,\beta}^{-}$. We can define $i^+=\inf_{(u,v)\in\mathcal{N}_{\alpha,\beta}^{+}}I_{\alpha,\beta}$ and $i^-=\inf_{(u,v)\in\mathcal{N}_{\alpha,\beta}^{-}}I_{\alpha,\beta}$ since the functional $I_{\alpha,\beta}$ is bounded below in $\mathcal{N}_{\alpha,\beta}$.\\
\begin{remark}
We will denote the norm convergence by $\rightarrow$, the weak convergence by $\rightharpoonup$ and $\Lambda$ as any small parameter we will encounter or any cumbersome representation in short form.
\end{remark}
\begin{lemma}\label{LBofI3}
There exists $\Lambda^*>0$ such that if $\nu\left[\left(\frac{1-\alpha}{2-\alpha-\beta}\right)^{\frac{p}{p+\alpha+\beta-2}}+\left(\frac{1-\beta}{2-\alpha-\beta}\right)^{\frac{p}{p+\alpha+\beta-2}}\right]\in (0,\Lambda^*)$, then 
\begin{enumerate}
\item $i^{+}<0$,
\item $i^-\geq D_0$ for some $D_0>0$. 
\end{enumerate}
\end{lemma}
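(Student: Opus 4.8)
The plan is to establish the two assertions separately, both through the fibering map $\Phi_{\alpha,\beta}(t)=I_{\alpha,\beta}(tu,tv)$ introduced above, and using Lemma~\ref{LBofI2} (after possibly shrinking $\Lambda^*$) so that $\mathcal N^0_{\alpha,\beta}=\emptyset$, hence $\mathcal N_{\alpha,\beta}=\mathcal N^+_{\alpha,\beta}\cup\mathcal N^-_{\alpha,\beta}$, which also makes $i^\pm$ well defined.

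For (1) I would fix one admissible pair $(u,v)\in X$ with $u,v>0$ and $D:=\int_\Omega h(x)u^{1-\alpha}v^{1-\beta}\,dx>0$ (possible since $h\ge0$, $h\not\equiv0$), and set $A=\|(u,v)\|_p^p$, $B=\|(u,v)\|_q^q$, $C=\int_\Omega(\lambda f u^r+\mu g v^r)\,dx$. Factoring out $t^{1-\alpha-\beta}$ gives $\Phi'_{\alpha,\beta}(t)=t^{\,1-\alpha-\beta}\psi(t)$ with
$$\psi(t)=t^{\,p-2+\alpha+\beta}A+t^{\,q-2+\alpha+\beta}B-t^{\,r-2+\alpha+\beta}C-\nu D .$$
Because $2-\alpha-\beta<q<p<r$, all exponents in $\psi$ are positive and $q-2+\alpha+\beta$ is the smallest, so $\psi(0^+)=-\nu D<0$ while $\psi$ is strictly increasing near $0$. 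A short computation shows $\psi'$ has at most one zero on $(0,\infty)$, so $\psi$ has at most one interior critical point, necessarily a maximum; comparing with the profile at $\nu=0$ (for which $\psi(t)>0$ for small $t$), this maximum stays strictly positive once $\nu$, hence $\nu D$, is small enough. Then $\psi$ has a first zero $t_1>0$ at which it changes sign from $-$ to $+$, so $\Phi_{\alpha,\beta}$ strictly decreases on $(0,t_1)$ and increases just after $t_1$. I would then conclude: $(t_1u,t_1v)\in\mathcal N_{\alpha,\beta}$; $\Phi''_{\alpha,\beta}(t_1)=t_1^{\,1-\alpha-\beta}\psi'(t_1)\ge0$, and since $\mathcal N^0_{\alpha,\beta}=\emptyset$ this is in fact $>0$, i.e.\ $(t_1u,t_1v)\in\mathcal N^+_{\alpha,\beta}$; and $I_{\alpha,\beta}(t_1u,t_1v)=\Phi_{\alpha,\beta}(t_1)<\Phi_{\alpha,\beta}(0)=0$, giving $i^+<0$.

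For (2) I would start from $(u,v)\in\mathcal N^-_{\alpha,\beta}$, so $\Phi''_{\alpha,\beta}(1)<0$; using the already computed identity
$$\Phi''_{\alpha,\beta}(1)=(p+\alpha+\beta-2)\|(u,v)\|_p^p+(q+\alpha+\beta-2)\|(u,v)\|_q^q+(2-\alpha-\beta-r)\int_\Omega(\lambda f u^r+\mu g v^r)\,dx$$
and dropping the nonnegative $q$-term, one gets $(p+\alpha+\beta-2)\|(u,v)\|_p^p<(r-2+\alpha+\beta)\int_\Omega(\lambda f u^r+\mu g v^r)\,dx$. Estimating the right side by $M'(\lambda+\mu)\int_\Omega(u^r+v^r)\,dx$, then applying Hölder's inequality with exponent $p^*/r$, the continuous embedding $X\hookrightarrow L^{p^*}(\Omega)\times L^{p^*}(\Omega)$, and $\|u\|_p^r+\|v\|_p^r\le\|(u,v)\|_p^r$ (valid since $r>p$), I expect to reach $\|(u,v)\|_p^p\le C_0\|(u,v)\|_p^r$ with $C_0$ depending only on $p,q,r,\alpha,\beta,M',\lambda,\mu,|\Omega|,S_{\alpha,\beta}$ and \emph{not} on $\nu$; as $r>p$ this forces a uniform bound $\|(u,v)\|_p\ge\rho_0>0$ on $\mathcal N^-_{\alpha,\beta}$, with $\rho_0$ independent of $\nu$. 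Feeding this into the coercivity chain preceding Lemma~\ref{empty}, namely $I_{\alpha,\beta}(u,v)\ge(\tfrac1p-\tfrac1r)\|(u,v)\|_p^p-\widehat C\,\nu\,\|(u,v)\|_p^{\,2-\alpha-\beta}$, and noting that $s\mapsto(\tfrac1p-\tfrac1r)s^p-\widehat C\nu s^{\,2-\alpha-\beta}$ is increasing on $[\rho_0,\infty)$ and strictly positive at $\rho_0$ once $\nu$ is small, I would set $D_0:=(\tfrac1p-\tfrac1r)\rho_0^p-\widehat C\nu\rho_0^{\,2-\alpha-\beta}>0$ and obtain $i^-\ge D_0$. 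Since $\rho_0$ is $\nu$-independent, all the smallness requirements translate into asking $\nu\big[(\tfrac{1-\alpha}{2-\alpha-\beta})^{p/(p+\alpha+\beta-2)}+(\tfrac{1-\beta}{2-\alpha-\beta})^{p/(p+\alpha+\beta-2)}\big]$ to lie below a suitable $\Lambda^*$, taken no larger than the one in Lemma~\ref{LBofI2}.

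The hard part will be (1): showing that the fibering map over a fixed admissible $(u,v)$ really has a local minimum that both lies in $\mathcal N^+_{\alpha,\beta}$ and has strictly negative energy. This hinges on the sign analysis of $\psi$, a combination of four power terms with the distinct exponents $p-1$, $q-1$, $r-1$, $1-\alpha-\beta$, and on using the smallness of $\nu$ to keep the interior maximum of $\psi$ positive so that its first zero $t_1$ is genuinely a $-$ to $+$ crossing. In (2) the only delicate point is to keep $\rho_0$, hence $\Lambda^*$, free of $\nu$, which is why the term $\int_\Omega(\lambda f u^r+\mu g v^r)\,dx$ should be estimated purely through $M'$, $\lambda$, $\mu$ and the Sobolev constant.
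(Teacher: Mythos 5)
Your proposal is correct, and the two halves relate to the paper differently. Part (2) is essentially the paper's own argument: the paper also uses the $\mathcal N^-$ inequality $(p+\alpha+\beta-2)\|(u,v)\|_p^p<(r+\alpha+\beta-2)\int_\Omega(\lambda f u^r+\mu g v^r)\,dx$ together with H\"older/Sobolev to extract a $\nu$-independent lower bound $\|(u,v)\|_p\geq\Lambda>0$ on $\mathcal N^-_{\alpha,\beta}$, and then feeds this into the estimate $I_{\alpha,\beta}(u,v)\geq\|(u,v)\|_p^{2-\alpha-\beta}\bigl[(\tfrac1p-\tfrac1r)\|(u,v)\|_p^{p+\alpha+\beta-2}-\nu A_0(\cdots)\bigr]$, exactly as you do. Part (1), however, is where you genuinely diverge. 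The paper takes an \emph{arbitrary} $(u,v)\in\mathcal N^+_{\alpha,\beta}$ and shows its energy is negative by substituting the defining inequality $\nu(r+\alpha+\beta-2)\int_\Omega h u^{1-\alpha}v^{1-\beta}\,dx>(r-p)\|(u,v)\|_p^p+(r-q)\|(u,v)\|_q^q$ into the Nehari form of $I_{\alpha,\beta}$, arriving at $I_{\alpha,\beta}(u,v)<\frac{r-p}{r}\bigl(\frac1p-\frac{1}{2-\alpha-\beta}\bigr)\|(u,v)\|_p^p+\frac{r-q}{r}\bigl(\frac1q-\frac{1}{2-\alpha-\beta}\bigr)\|(u,v)\|_q^q<0$ since $q,p>2-\alpha-\beta$; this needs no smallness of $\nu$ but tacitly assumes $\mathcal N^+_{\alpha,\beta}\neq\emptyset$ (which the paper only establishes in the later fibering-map lemma). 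You instead \emph{construct} an element of $\mathcal N^+_{\alpha,\beta}$ with negative energy by analyzing $\psi$ along a fixed ray; your sign analysis is sound (the unique critical point of $\psi$ follows from dividing $\psi'=0$ by $t^{p-3+\alpha+\beta}$, leaving a decreasing left side against an increasing right side), and it buys you the nonemptiness of $\mathcal N^+_{\alpha,\beta}$ for free, at the cost of an extra smallness condition on $\nu$ to keep the interior maximum of $\psi$ positive. Both routes are valid; yours is self-contained where the paper defers to the next lemma, while the paper's is sharper in that every point of $\mathcal N^+_{\alpha,\beta}$ is shown to have negative energy regardless of $\nu$. One small point worth making explicit in your write-up: the claim $\int_\Omega h(x)u^{1-\alpha}v^{1-\beta}\,dx>0$ for a suitable pair requires $h\not\equiv 0$, which the paper assumes only implicitly.
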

\begin{proof}
\begin{enumerate}
	\item Let $(u,v)\in\mathcal{N}_{\alpha,\beta}^+\subset\mathcal{N}_{\alpha,\beta}$. Then we have
\begin{eqnarray}
0<(r-p)\|(u,v)\|_p^p+(r-q)\|(u,v)\|_q^q&<&\nu(r+\alpha+\beta-2)\int_{\Omega}h(x)u^{1-\alpha}v^{1-\beta}dx\nonumber\\
\end{eqnarray}
Further, 
\begin{eqnarray}
I_{\alpha,\beta}(u,v)&=&\left(\frac{1}{p}-\frac{1}{r}\right)\|(u,v)\|_p^p+\left(\frac{1}{q}-\frac{1}{r}\right)\|(u,v)\|_q^q\nonumber\\
& &+\nu\left(\frac{1}{r}-\frac{1}{2-\alpha-\beta}\right)\int_{\Omega}h(x)u^{1-\alpha}v^{1-\beta}dx.\nonumber\\
&< &\left(\frac{1}{p}-\frac{1}{r}\right)\|(u,v)\|_p^p+\left(\frac{1}{q}-\frac{1}{r}\right)\|(u,v)\|_q^q\nonumber\\
& &-\frac{(r-p)}{r(2-\alpha-\beta)}\|(u,v)\|_p^p-\frac{(r-q)}{r(2-\alpha-\beta)}\|(u,v)\|_q^q\nonumber\\
&= &\frac{(r-p)}{r}\left(\frac{1}{p}-\frac{1}{2-\alpha-\beta}\right)\|(u,v)\|_p^p+\frac{(r-p)}{r}\left(\frac{1}{q}-\frac{1}{2-\alpha-\beta}\right)\|(u,v)\|_q^q\nonumber\\
& &<0.\nonumber
\end{eqnarray}
Therefore, $i^+=\inf_{(u,v)\in\mathcal{N}_{\alpha,\beta}^+}I_{\alpha,\beta}(u,v)<0$.\\
\item Likewise, let us choose $(u,v)\in\mathcal{N}_{\alpha,\beta}^-$. We again appeal to the following inequality 
\begin{eqnarray}
(p+\alpha+\beta-2)\|(u,v)\|_p^p&<&(p+\alpha+\beta-2)\|(u,v)\|_p^p+(q+\alpha+\beta-2)\|(u,v)\|_q^q\nonumber\\
&<&(r+\alpha+\beta-2)\int_{\Omega}(\lambda f(x)u^r+\mu g(x)v^r)dx\nonumber\\
&\leq&(r+\alpha+\beta-2)CM'(\lambda^{\frac{r}{r-p}}+\mu^{\frac{r}{r-p}})\|(u,v)\|_p^r.\nonumber\\
\end{eqnarray}
by virtue of the fact that $(u,v)\in\mathcal{N}_{\alpha,\beta}$. Therefore
$$\|(u,v)\|_p\geq \left[\left(\frac{p+\alpha+\beta-2}{r+\alpha+\beta-2}\right)\frac{1}{CM'(\lambda^{\frac{r}{r-p}}+\mu^{\frac{r}{r-p}})}\right]^{\frac{1}{r-p}}.$$
We will call this cumbersome looking constant as $\Lambda$. 
Therefore on proceeding further we have 
\begin{eqnarray}
I_{\alpha,\beta}(u,v)&=&\left(\frac{1}{p}-\frac{1}{r}\right)\|(u,v)\|_p^p+\left(\frac{1}{q}-\frac{1}{r}\right)\|(u,v)\|_q^q\nonumber\\
& &+\nu\left(\frac{1}{r}-\frac{1}{2-\alpha-\beta}\right)\int_{\Omega}h(x)u^{1-\alpha}v^{1-\beta}dx\nonumber\\
& &\geq \left(\frac{1}{p}-\frac{1}{r}\right)\|(u,v)\|_p^p\nonumber\\
& &-\nu M|\Omega|^{1-\frac{2-\alpha-\beta}{p^*}}S^{\frac{\alpha+\beta-2}{p}}\nonumber\\
& &\times\left(\frac{1}{2-\alpha-\beta}-\frac{1}{r}\right)\int_{\Omega}\left(\frac{1-\alpha}{2-\alpha-\beta}|\nabla u|_{p}^{2-\alpha-\beta}+\frac{1-\beta}{2-\alpha-\beta}|\nabla  v|_{p}^{2-\alpha-\beta}\right)dx\nonumber\\
&\geq&\left(\frac{1}{p}-\frac{1}{r}\right)\|(u,v)\|_p^p\nonumber\\
& &-A_0(p, s, N, \alpha, \beta, |\Omega|)\left[\left(\frac{1-\alpha}{2-\alpha-\beta}\right)^{\frac{p}{p+\alpha+\beta-2}}+\left(\frac{1-\beta}{2-\alpha-\beta}\right)^{\frac{p}{p+\alpha+\beta-2}}\right]\|(u,v)\|_p^{2-\alpha-\beta}\nonumber\\
&=&\|(u,v)\|_p^{2-\alpha-\beta}\left[\left(\frac{1}{p}-\frac{1}{r}\right)\|(u,v)\|_p^{p+\alpha+\beta-2}\right.\nonumber\\
& &\left.-A_0(p, s, N, \alpha, \beta, |\Omega|)\left\{\left(\frac{1-\alpha}{2-\alpha-\beta}\right)^{\frac{p}{p+\alpha+\beta-2}}+\left(\frac{1-\beta}{2-\alpha-\beta}\right)^{\frac{p}{p+\alpha+\beta-2}}\right\}\right].\nonumber\\
& &\Lambda^{2-\alpha-\beta}\left[\left(\frac{1}{p}-\frac{1}{r}\right)\Lambda^{p+\alpha+\beta-2}\right.\nonumber\\
& &\left.-A_0(p, s, N, \alpha, \beta, |\Omega|)\left\{\left(\frac{1-\alpha}{2-\alpha-\beta}\right)^{\frac{p}{p+\alpha+\beta-2}}+\left(\frac{1-\beta}{2-\alpha-\beta}\right)^{\frac{p}{p+\alpha+\beta-2}}\right\}\right].\nonumber
\end{eqnarray}
Then for a sufficiently small $\Lambda^*>0$ and $D_0>0$ such that $\left(\frac{1-\alpha}{2-\alpha-\beta}\right)^{\frac{p+\alpha+\beta-2}{p}}+\left(\frac{1-\beta}{2-\alpha-\beta}\right)^{\frac{p+\alpha+\beta-2}{p}}\in(0,\Lambda^*)$, we have $i^-\geq D_0>0$.
\end{enumerate}
\end{proof}
\begin{remark}
For a better understanding of the Nehari manifold and the fiber maps, we define the function 
$$F_{u,v}(t)=t^{p-r}\|(u,v)\|_p^p+t^{q-r}\|(u,v)\|_q^q-\nu t^{2-\alpha-\beta-r}\int_{\Omega}h(x)u^{1-\alpha}v^{1-\beta}dx.$$
Then 
$$\Phi'(t)=t^{r-1}[F_{u,v}(t)-\int_{\Omega}(\lambda f(x)u^r+\beta g(x)v^r)dx].$$	
\end{remark}
observe that $\lim_{t\rightarrow\infty}F_{u,v}(t)=0$ and $\lim_{t\rightarrow 0^+}F_{u,v}(t)=-\infty$. Further, 
\begin{eqnarray}
F'_{u,v}(t)&=&(p-r)t^{p-r-1}\|(u,v)\|_p^p+(q-r)t^{q-r-1}\|(u,v)\|_q^q\nonumber\\
& &-\nu(2-\alpha-\beta-r)t^{1-\alpha-\beta-r}\int_{\Omega}h(x)u^{1-\alpha}v^{1-\beta}dx\nonumber\\
&=&t^{1-\alpha-\beta-r}[(p-r)t^{p+\alpha+\beta}\|(u,v)\|_p^p+(q-r)t^{q+\alpha+\beta}\|(u,v)\|_q^q\nonumber\\
& &-\nu(2-\alpha-\beta-r)\int_{\Omega}h(x)u^{1-\alpha}v^{1-\beta}dx].\nonumber
\end{eqnarray}
Let $$\psi_{u,v}(t)=(p-r)t^{p+\alpha+\beta}\|(u,v)\|_p^p+(q-r)t^{q+\alpha+\beta}\|(u,v)\|_q^q
-\nu(2-\alpha-\beta-r)\int_{\Omega}h(x)u^{1-\alpha}v^{1-\beta}dx.$$
We also have $\lim_{t\rightarrow 0^+}\psi_{u,v}(t)=\nu(r+\alpha+\beta-2)\int_{\Omega}h(x)u^{1-\alpha}v^{1-\beta}dx$, $\lim_{t\rightarrow\infty}\psi_{u,v}(t)=-\infty$ and
$$\psi'_{u,v}(t)=(p-r)(p+\alpha+\beta)t^{p+\alpha+\beta-1}\|(u,v)\|_p^p+(q-r)(q+\alpha+\beta)t^{q+\alpha+\beta-1}\|(u,v)\|_q^q<0.$$
Thus, for each $(u,v)\in X$ with $\int{\Omega}h(x)u^{1-\alpha}v^{1-\beta}dx>0$, $F_{u,v}(t)$ attains its maximum at some $t_{max}=t_{max}(u,v)$. This unique $t_{max}$ can be evaluated by solving for $t$ from the equation 
$$(r-p)t^{p+\alpha+\beta}\|(u,v)\|_p^p+(r-q)t^{q+\alpha+\beta}\|(u,v)\|_q^q
=\nu(r+\alpha+\beta-2)\int_{\Omega}h(x)u^{1-\alpha}v^{1-\beta}dx.$$
A simple calculation yields 
$$F_{u,v}(t_{max})=t_{max}^{p-r}\left(1+\frac{r-p}{r+\alpha_\beta-2}t_{max}^2\right)\|(u,v)\|_{p}^p+t_{max}^{q-r}\left(1+\frac{r-q}{r+\alpha_\beta-2}t_{max}^2\right)\|(u,v)\|_{q}^q>0.$$
Thus for $t\in(0,t_{max})$ we have $F'_{u,v}(t)>0$ and $F'_{u,v}(t)<0$.\\
We now have the following lemma as a consequence.
\begin{lemma}
For every $(u,v)\in X\setminus\{(0,0)\}$ there exists a unique $0<t^+<t_{max}$ such that $(t^+u,t^+v)\in\mathcal{N}_{\alpha,\beta}^+$ and 
$$I_{\alpha,\beta}(t^+u,t^+v)=\inf_{t\geq 0}I_{\alpha,\beta}(tu,tv).$$
Furthermore, if $\int_{\Omega}(\lambda f(x)u^r+\mu g(x)v^r)dx>0$ then there exists unique $0<t^+<t_{max}<t^-$ such that $(t^+u,t^+v)\in\mathcal{N}_{\alpha,\beta}^+$, $(t^-u,t^-v)\in\mathcal{N}_{\alpha,\beta}^-$ and
$$I_{\alpha,\beta}(t^+u,t^+v)=\inf_{0\leq t\leq t_{max}}I_{\alpha,\beta}(tu,tv),~~~ I_{\alpha,\beta}(t^-u,t^-v)=\sup_{t\geq 0}I_{\alpha,\beta}(tu,tv).$$
\end{lemma}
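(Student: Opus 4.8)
The plan is to reduce everything to a study of the fibre map $\Phi_{\alpha,\beta}(t)=I_{\alpha,\beta}(tu,tv)$ through the auxiliary function $F_{u,v}$ introduced above and the identity
\[
\Phi_{\alpha,\beta}'(t)=t^{r-1}\bigl(F_{u,v}(t)-K\bigr),\qquad K:=\int_{\Omega}\bigl(\lambda f(x)u^{r}+\mu g(x)v^{r}\bigr)dx\ \ge\ 0 .
\]
Thus $(tu,tv)\in\mathcal N_{\alpha,\beta}$ exactly when $F_{u,v}(t)=K$, and differentiating once more shows that at such a root $t_{0}$ one has $\Phi_{\alpha,\beta}''(t_{0})=t_{0}^{r-1}F_{u,v}'(t_{0})$ (the other term vanishes since $F_{u,v}(t_{0})=K$). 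Since $\mathcal N_{\alpha,\beta}^{\pm}$-membership of $(t_{0}u,t_{0}v)$ is decided by the sign of $\frac{d^{2}}{ds^{2}}\big|_{s=1}I_{\alpha,\beta}(st_{0}u,st_{0}v)=t_{0}^{2}\Phi_{\alpha,\beta}''(t_{0})$, the pair $(t_{0}u,t_{0}v)$ lies in $\mathcal N_{\alpha,\beta}^{+}$ or in $\mathcal N_{\alpha,\beta}^{-}$ according to the sign of $F_{u,v}'(t_{0})$. Throughout I take $(u,v)$ nontrivial with $\int_{\Omega}h\,u^{1-\alpha}v^{1-\beta}dx>0$ (the only case of interest); then, as established in the discussion preceding the lemma, $F_{u,v}$ is strictly increasing on $(0,t_{max})$, strictly decreasing on $(t_{max},\infty)$, with $F_{u,v}(0^{+})=-\infty$, $F_{u,v}(\infty)=0$ and $F_{u,v}(t_{max})>0$.

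The decisive step is the uniform bound $0\le K<F_{u,v}(t_{max})$ for all such $(u,v)$, under the smallness hypothesis on $\nu\bigl[(\tfrac{1-\alpha}{2-\alpha-\beta})^{p/(p+\alpha+\beta-2)}+(\tfrac{1-\beta}{2-\alpha-\beta})^{p/(p+\alpha+\beta-2)}\bigr]$. I would obtain it by bounding $F_{u,v}(t_{max})$ from below: discarding the nonnegative term $t^{q-r}\|(u,v)\|_{q}^{q}$ in $F_{u,v}$ and maximising the two-power remainder $t^{p-r}\|(u,v)\|_{p}^{p}-\nu t^{2-\alpha-\beta-r}\int_{\Omega}h\,u^{1-\alpha}v^{1-\beta}dx$ over $t>0$ gives, after a routine computation,
\[
F_{u,v}(t_{max})\ \ge\ c_{0}\Bigl(\nu\!\int_{\Omega}h\,u^{1-\alpha}v^{1-\beta}dx\Bigr)^{\frac{p-r}{p+\alpha+\beta-2}}\ \|(u,v)\|_{p}^{\,p\frac{r+\alpha+\beta-2}{p+\alpha+\beta-2}}
\]
for an explicit $c_{0}>0$. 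On the other hand $K\le CM'(\lambda+\mu)\|(u,v)\|_{p}^{r}$ by the Sobolev embedding $W_{0}^{1,p}(\Omega)\hookrightarrow L^{r}(\Omega)$, while $\int_{\Omega}h\,u^{1-\alpha}v^{1-\beta}dx\le C_{1}\|(u,v)\|_{p}^{2-\alpha-\beta}$ by the H\"older--Young--Sobolev chain already performed in Lemma \ref{empty}, with $C_{1}$ carrying the dependence on $M$, $|\Omega|$, $S$ and on $\tfrac{1-\alpha}{2-\alpha-\beta},\tfrac{1-\beta}{2-\alpha-\beta}$. Feeding these two bounds into the displayed estimate, every power of $\|(u,v)\|_{p}$ collapses to $r$ on both sides, so $K<F_{u,v}(t_{max})$ reduces to a scalar inequality among the structural constants which is precisely the condition defining $\Lambda^{*}$ in Lemma \ref{LBofI2}. (Equivalently: $K=F_{u,v}(t_{max})$ would force $(t_{max}u,t_{max}v)\in\mathcal N_{\alpha,\beta}^{0}=\phi$, and the same scaling identity excludes $K>F_{u,v}(t_{max})$.) This book-keeping of exponents is the only genuine obstacle; the rest is one-variable calculus.

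Granting $0\le K<F_{u,v}(t_{max})$, the conclusion follows. If $K=0$, then $F_{u,v}(t)=0$ has a single root $t^{+}\in(0,t_{max})$ and none in $(t_{max},\infty)$, where $F_{u,v}>0$; hence $\Phi_{\alpha,\beta}'=t^{r-1}F_{u,v}$ is negative on $(0,t^{+})$ and positive on $(t^{+},\infty)$, while $\Phi_{\alpha,\beta}(t)\to+\infty$ as $t\to\infty$, so $\Phi_{\alpha,\beta}$ attains its global minimum over $t\ge 0$ at $t^{+}$, and $F_{u,v}'(t^{+})>0$ gives $(t^{+}u,t^{+}v)\in\mathcal N_{\alpha,\beta}^{+}$. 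If $K>0$, strict monotonicity of $F_{u,v}$ on each side of $t_{max}$ together with the intermediate value theorem yields exactly two roots $0<t^{+}<t_{max}<t^{-}$, the only positive critical points of $\Phi_{\alpha,\beta}$; from $F_{u,v}'(t^{+})>0>F_{u,v}'(t^{-})$ we get $(t^{+}u,t^{+}v)\in\mathcal N_{\alpha,\beta}^{+}$ and $(t^{-}u,t^{-}v)\in\mathcal N_{\alpha,\beta}^{-}$. The sign pattern of $\Phi_{\alpha,\beta}'$ --- negative on $(0,t^{+})$, positive on $(t^{+},t^{-})$, negative on $(t^{-},\infty)$ --- shows that $t^{+}$ realises $\inf_{0\le t\le t_{max}}I_{\alpha,\beta}(tu,tv)$, while $t^{-}$ is the unique interior local maximum; since $\Phi_{\alpha,\beta}(t^{-})=I_{\alpha,\beta}(t^{-}u,t^{-}v)\ge D_{0}>0=\Phi_{\alpha,\beta}(0)$ by Lemma \ref{LBofI3}(2) and $\Phi_{\alpha,\beta}(t)\to-\infty$ as $t\to\infty$, the point $t^{-}$ realises $\sup_{t\ge 0}I_{\alpha,\beta}(tu,tv)$. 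Uniqueness of $t^{+}$ and of $t^{-}$ in every case is immediate from the strict monotonicity of $F_{u,v}$ on $(0,t_{max})$ and on $(t_{max},\infty)$.
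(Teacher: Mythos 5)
Your proof is correct and follows the same basic route as the paper: read the Nehari conditions off the factorisation $\Phi_{\alpha,\beta}'(t)=t^{r-1}\bigl(F_{u,v}(t)-K\bigr)$ and the unimodal shape of $F_{u,v}$. The difference is one of completeness. The paper's proof simply asserts that $F_{u,v}(t)=K$ ``has only two solutions $0<t^+<t_{max}<t^-$'', which is true only when $0<K<F_{u,v}(t_{max})$, and it never verifies that inequality; you supply exactly that verification, and your scaling computation (discard the $q$-term, maximise the remaining two-power function, then observe that both sides collapse to a multiple of $\|(u,v)\|_p^{r}$, leaving a constant inequality governed by $\Lambda^*$) is the standard and correct way to do it --- indeed the exponent bookkeeping checks out, since $(2-\alpha-\beta)(p-r)+p(r+\alpha+\beta-2)=r(p+\alpha+\beta-2)$. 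You also treat the case $K=0$, which the paper skips, and you use the correct identity $\Phi_{\alpha,\beta}''(t_0)=t_0^{r-1}F_{u,v}'(t_0)$ at a root of $F_{u,v}(t)=K$, whereas the paper's displayed formulas for $\Phi''(t^{\pm})$ are literally the expressions for $\Phi'$ and would vanish at $t^{\pm}$ as written. One small remark: your appeal to Lemma \ref{LBofI3}(2) to conclude $\Phi_{\alpha,\beta}(t^-)>0=\Phi_{\alpha,\beta}(0)$ is legitimate (that lemma is established independently of this one, so there is no circularity), though a direct lower bound on $\Phi_{\alpha,\beta}(t^-)$ would make the argument self-contained.
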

\begin{proof}
We only prove the case when $\int_{\Omega}(\lambda f(x)u^r+\mu g(x)v^r)dx>0$. Thus the equation $F_{u,v}(t)=\int_{\Omega}(\lambda f(x)u^r+\beta g(x)v^r)dx$ has only two solutions namely, $0<t^+<t_{max}<t^-$ such that $I'_{\alpha,\beta}(t^+)>0$ and $I'_{\alpha,\beta}(t^-)<0$. Since 
$$\Phi''(t^+)=(t^+)^{r-1}[F_{u,v}(t^+)-\int_{\Omega}(\lambda f(x)u^r+\mu g(x)v^r)dx]>0$$	and
$$\Phi''(t^-)=(t^-)^{r-1}[F_{u,v}(t^-)-\int_{\Omega}(\lambda f(x)u^r+\mu g(x)v^r)dx]<0,$$ therefore $(t^+u,t^+v)\in \mathcal{N}_{\alpha,\beta}^+$ and $(t^-u,t^-v)\in \mathcal{N}_{\alpha,\beta}^-$. Thus $\Phi(t)$ decreases in $(0,t^+)$, increases in $(t^+,t^-)$ and decreases in $(t^-,\infty)$. Hence the lemma.
\end{proof}
\noindent We now define the palais-Smale sequence ((PS)-sequence), (PS)-condition and (PS)-value in $X$ for $I_{\alpha,\beta}$ corresponding to the functional $I_{\alpha,\beta}$ which is as follows.
\begin{definition}
	Suppose for $c\in\mathbb{R}$, a sequence $\{(u_n,v_n)\}\subset X$ is a $(PS)_c$-sequence for the functional $I_{\alpha,\beta}$ if $I_{\alpha,\beta}(u_n,v_n)\rightarrow c$ and $I'_{\alpha,\beta}(u_n,v_n)\rightarrow 0$ in $X'$ as $n\rightarrow\infty$, then:
\begin{enumerate}
\item $c\in\mathbb{R}$ is a $(PS)$ value in $X$ for the functional $I_{\alpha,\beta}$ if there exists a $(PS)_c$-sequence in $X$ for $I_{\alpha,\beta}$.
\item The functional $I_{\alpha,\beta}$ satisfies the $(PS)_c$-condition in $X$ for $I_{\alpha,\beta}$ if any $(PS)_c$-sequence admits a strongly convergent subsequence in $X$.
\end{enumerate}
\end{definition}
\begin{remark}
We will sometimes denote $\lim_{n\rightarrow\infty}x_n=0$ as $x_n=o(1)$ for a sequence of real numbers $(x_n)$.
\end{remark}
\begin{lemma}\label{PSbounded}
For any $0<\alpha,\beta<1$, the functional $I_{\alpha,\beta}$ satisfies the $(PS)_c$-condition for $c\in\left(-\infty,\frac{S_{\alpha,\beta}^{\frac{r}{r-p}}}{\Lambda}-\nu A_0\left[\left(\frac{1-\alpha}{2-\alpha-\beta}\right)^{\frac{p}{p+\alpha+\beta-2}}+\left(\frac{1-\beta}{2-\alpha-\beta}\right)^{\frac{p}{p+\alpha+\beta-2}}\right]\right)$ where $\Lambda=2M'(\lambda^{\frac{r}{r-p}}+\mu^{\frac{r}{r-p}})\}^{\frac{p}{r-p}}|\Omega|^{\frac{1}{r}}$.
\end{lemma}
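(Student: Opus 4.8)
The plan is to carry out the classical Palais--Smale analysis, the only genuinely delicate point being the singular terms. Let $\{(u_n,v_n)\}\subset X$ be a $(PS)_c$-sequence with $c$ in the stated range.

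Step 1 (boundedness). I would first form $I_{\alpha,\beta}(u_n,v_n)-\tfrac1r\langle I'_{\alpha,\beta}(u_n,v_n),(u_n,v_n)\rangle$; the $L^r$-terms cancel, leaving $\bigl(\tfrac1p-\tfrac1r\bigr)\|(u_n,v_n)\|_p^p+\bigl(\tfrac1q-\tfrac1r\bigr)\|(u_n,v_n)\|_q^q$ together with a multiple of $\int_\Omega h(x)u_n^{1-\alpha}v_n^{1-\beta}\,dx$, and the latter is bounded by $M|\Omega|^{1-\frac{2-\alpha-\beta}{p^*}}S^{\frac{\alpha+\beta-2}{p}}\|(u_n,v_n)\|_p^{2-\alpha-\beta}$ exactly as in the proof of Lemma~\ref{empty}. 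Since $I_{\alpha,\beta}(u_n,v_n)=c+o(1)$, $\langle I'_{\alpha,\beta}(u_n,v_n),(u_n,v_n)\rangle=o(\|(u_n,v_n)\|_p)$ and $2-\alpha-\beta<p$, this forces $\|(u_n,v_n)\|_p\le C$. Passing to a subsequence, $(u_n,v_n)\rightharpoonup(u,v)$ in $X$, and by the compact embeddings $W_0^{1,p}(\Omega)\hookrightarrow\hookrightarrow L^s(\Omega)$, $1\le s<p^*$, we get $u_n\to u$, $v_n\to v$ strongly in every such $L^s$ and a.e.\ in $\Omega$; in particular $u,v\ge 0$.

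Step 2 (regular and singular terms). The subcritical terms are compact: since $p<r<p^*$, $u_n^{r-1}\to u^{r-1}$ in $L^{r/(r-1)}(\Omega)$, so $\int_\Omega f u_n^{r-1}\phi\to\int_\Omega f u^{r-1}\phi$ (and similarly for $g$), while $u_n^{1-\alpha}v_n^{1-\beta}$ is bounded in $L^{p^*/(2-\alpha-\beta)}(\Omega)$ with $p^*/(2-\alpha-\beta)>1$, hence $\int_\Omega h u_n^{1-\alpha}v_n^{1-\beta}\to\int_\Omega h u^{1-\alpha}v^{1-\beta}$ by Vitali's theorem. For the singular terms one uses $u_n,v_n>0$ together with the a.e.\ convergence: on $\{u>0\}$ the integrand $h u_n^{-\alpha}v_n^{1-\beta}u$ converges pointwise to $h u^{1-\alpha}v^{1-\beta}$ and on $\{u=0\}$ it vanishes, so Fatou's lemma gives $\liminf_n\int_\Omega h u_n^{-\alpha}v_n^{1-\beta}u\,dx\ge\int_\Omega h u^{1-\alpha}v^{1-\beta}\,dx$; combined with the previous sentence this yields $\limsup_n\int_\Omega h u_n^{-\alpha}v_n^{1-\beta}(u_n-u)\,dx\le 0$ and, symmetrically, $\limsup_n\int_\Omega h u_n^{1-\alpha}v_n^{-\beta}(v_n-v)\,dx\le 0$. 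To identify $(u,v)$ as a positive weak solution of \eqref{main}, i.e.\ to pass to the limit in $\langle I'_{\alpha,\beta}(u_n,v_n),(\phi_1,\phi_2)\rangle=o(1)$, I would use uniform lower bounds $u_n,v_n\ge c\,\mathrm{dist}(x,\partial\Omega)$ (a Hopf-type estimate against a fixed positive subsolution of the $p$--$q$ operator) together with Hardy's inequality, which control $u_n^{-\alpha}\phi_1$ and $v_n^{-\beta}\phi_2$ in $L^1(\Omega)$ and legitimize dominated convergence. The upper bound on $c$ enters here to keep the energy level below the critical threshold so that the limit is the relevant nontrivial solution; the degenerate alternative $(u,v)=(0,0)$ is harmless, since then Step~1 forces $\|(u_n,v_n)\|_p\to 0$ outright.

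Step 3 (strong convergence). Testing $\langle I'_{\alpha,\beta}(u_n,v_n),(u_n-u,v_n-v)\rangle=o(1)$: the $L^r$-integrals go to $0$ by compactness and the two singular integrals have nonpositive $\limsup$ by Step~2, so $\limsup_n\langle A(u_n,v_n),(u_n-u,v_n-v)\rangle\le 0$, where $A$ denotes the component-wise operator $-\Delta_p-\Delta_q$. Since $-\Delta_p-\Delta_q$ satisfies the $(S_+)$-property and $(u_n,v_n)\rightharpoonup(u,v)$, it follows that $(u_n,v_n)\to(u,v)$ strongly in $X$; equivalently one applies the Brezis--Lieb lemma to $\|\cdot\|_p^p$ and $\|\cdot\|_q^q$, cancels the compact lower-order remainders, and subtracts the Nehari identity $\langle I'_{\alpha,\beta}(u,v),(u,v)\rangle=0$ to reach the same conclusion. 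I expect Step~2 --- justifying the integrability and the limit passage of the singular products $\int_\Omega h u_n^{-\alpha}v_n^{1-\beta}\phi_1$ and $\int_\Omega h u_n^{1-\alpha}v_n^{-\beta}\phi_2$ in the absence of a $C^1$-structure --- to be the main obstacle; the boundedness and the subcritical compactness are entirely routine.
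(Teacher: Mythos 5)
Your proposal is correct in outline but takes a genuinely different route from the paper at both of its main junctures. For boundedness, the paper argues by contradiction: it normalizes $(\tilde u_n,\tilde v_n)=(u_n,v_n)/\|(u_n,v_n)\|_p$, divides the energy and derivative identities by powers of $\|(u_n,v_n)\|_p$, and derives $\|(\tilde u_n,\tilde v_n)\|_p\to\infty$, contradicting the normalization; your direct computation of $I_{\alpha,\beta}(u_n,v_n)-\tfrac1r\langle I'_{\alpha,\beta}(u_n,v_n),(u_n,v_n)\rangle$ reaches the same conclusion more economically. For strong convergence, the paper sets $\|(u_n-u,v_n-v)\|_p^p\to c'$ and $\|(u_n-u,v_n-v)\|_q^q\to d'$, uses Brezis--Lieb to equate the $L^r$ deficit with $c'+d'$, and then invokes the Sobolev constant to force $c'\ge S_{\alpha,\beta}^{r/(r-p)}/\Lambda$ whenever $c'>0$ --- this is the only place the upper bound on $c$ is consumed; you instead observe that $r<p^*$ makes the $L^r$ terms compact outright and close via the $(S_+)$ property of $-\Delta_p-\Delta_q$. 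Your route is cleaner and in fact yields the $(PS)_c$ condition for every $c$; the threshold is only genuinely needed in the critical case $r=p^*$, of which the paper's argument is a template, and the paper's intermediate chain $\left(\tfrac{c'}{2}\right)^{p/p^*}\le\lim_n\int_\Omega(\lambda f|u_n-u|^r+\mu g|v_n-v|^r)\,dx$ is in any case not consistent for subcritical $r$. The one place where your sketch outruns what you actually establish is the uniform Hopf-type lower bound $u_n,v_n\ge c\,\mathrm{dist}(x,\partial\Omega)$ for an \emph{arbitrary} $(PS)_c$-sequence: such a bound can be justified for sequences produced by Ekeland's principle on the Nehari manifold (by comparison with the subsolution $(\underline u,\underline v)$ of the cut-off construction), but not for a general $(PS)$ sequence in $X$. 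The paper sidesteps this entirely by simply asserting the convergence of the singular integrals, so your version is, if anything, the more honest account of where the real difficulty lies.
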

\begin{proof}
Suppose $\{(u_n,v_n)\}$ is a $(PS)_c$-sequence in $X$ for the functional $I_{\alpha,\beta}$ with $c\in\left(-\infty,\frac{S_{\alpha,\beta}^{\frac{r}{r-p}}}{\Lambda}-\nu A_0\left[\left(\frac{1-\alpha}{2-\alpha-\beta}\right)^{\frac{p}{p+\alpha+\beta-2}}+\left(\frac{1-\beta}{2-\alpha-\beta}\right)^{\frac{p}{p+\alpha+\beta-2}}\right]\right)$. Then
\begin{eqnarray}\label{PS1}
I_{\alpha,\beta}(u_n,v_n)=c+o(1), ~~~I'_{\alpha,\beta}(u_n,v_n)=o(1).
\end{eqnarray}
We now claim that $\{(u_n,v_n)\}$ is bounded in $X$. We prove this claim by contradiction, i.e. say $\|(u_n,v_n)\|_p\rightarrow\infty$. Let $(\tilde{u_n},\tilde{v_n})=\left(\frac{u_n}{\|(u_n,v_n)\|_p},\frac{v_n}{\|(u_n,v_n\|_p)}\right)$, then $\|(\tilde{u_n},\tilde{v_n})\|_p=1$ which implies that $(\tilde{u_n},\tilde{v_n})$ is bounded in $X$. Therefore, due to the reflexivity of the space $X$, we have upto a subsequence 
$$(\tilde{u_n},\tilde{v_n})\rightharpoonup(u_n,v_n)$$ as $n\rightarrow\infty$ in $X$. This further implies that 
$$\tilde{u_n}\rightharpoonup\tilde{u},~~~\tilde{v_n}\rightharpoonup\tilde{v}~\text{in}~W_0^{1,p}(\Omega),$$
$$\tilde{u_n}\rightarrow\tilde{u},~~~\tilde{v_n}\rightarrow\tilde{v}~\text{in}~L^{s}(\Omega),~1\leq s<p^*,$$
$$\int_{\Omega}\nu h(x)\tilde{u_n}^{1-\alpha}\tilde{v_n}^{1-\beta}dx\rightarrow\int_{\Omega}\nu h(x)u^{1-\alpha}v^{1-\beta}dx.$$
The last convergence follows from the Egoroff's theorem. From \eqref{PS1} we have
\begin{eqnarray}c+o(1)&=&\frac{1}{p}\|(u_n,v_n)\|_p^p\|(\tilde{u_n},\tilde{v}_n)\|_p^p+\frac{1}{q}\|(u_n,v_n)\|_q^q\|(\tilde{u_n},\tilde{v}_n)\|_q^q\nonumber\\
& &-\frac{1}{r}\|(u_n,v_n)\|_{p}^r\int_{\Omega}(\lambda f(x) \tilde{u}_n^r+\mu g(x)\tilde{v}_n^r)dx\nonumber\\
& &-\frac{\nu}{2-\alpha-\beta}\|(u_n,v_n)\|_{p}^{2-\alpha-\beta}\int_{\Omega}h(x)\tilde{u}_n^{1-\alpha}\tilde{v}_n^{1-\beta}dx\nonumber\end{eqnarray}
and 
\begin{eqnarray}o(1)&=&\|(u_n,v_n)\|_p^p\|(\tilde{u}_n,\tilde{v}_n)\|_p^p+\|(u_n,v_n)\|_q^q\|(\tilde{u}_n,\tilde{v}_n)\|_q^q\nonumber\\
& &-\|(u_n,v_n)\|_p^r\int_{\Omega}(\lambda f(x) \tilde{u}_n^r+\mu g(x)\tilde{v}_n^r)dx\nonumber\\
& &-\nu\|(u_n,v_n)\|_{p}^{2-\alpha-\beta}\int_{\Omega}h(x)\tilde{u}_n^{1-\alpha}\tilde{v}_n^{1-\beta}dx.\nonumber\end{eqnarray}
Now by the assumption we made, i.e. $\|(u_n,v_n)\|_p\rightarrow\infty$, we obtain
\begin{eqnarray}o(1)&=&\frac{1}{p}\|(\tilde{u_n},\tilde{v}_n)\|_p^p+\frac{1}{q}\|(u_n,v_n)\|_p^{q-p}\|(\tilde{u_n},\tilde{v}_n)\|_q^q\nonumber\\
& &-\frac{1}{r}\|(u_n,v_n)\|_{p}^{r-p}\int_{\Omega}(\lambda f(x) \tilde{u}_n^r+\mu g(x)\tilde{v}_n^r)dx\nonumber\\
& &-\frac{\nu}{2-\alpha-\beta}\|(u_n,v_n)\|_{p}^{2-\alpha-\beta-p}\int_{\Omega}h(x)\tilde{u}_n^{1-\alpha}\tilde{v}_n^{1-\beta}dx\nonumber\end{eqnarray}
and
\begin{eqnarray}o(1)&=&\|(\tilde{u}_n,\tilde{v}_n)\|_p^p+\|(u_n,v_n)\|_p^{q-p}\|(\tilde{u}_n,\tilde{v}_n)\|_q^q\nonumber\\
& &-\|(u_n,v_n)\|_p^{r-p}\int_{\Omega}(\lambda f(x) \tilde{u}_n^r+\mu g(x)\tilde{v}_n^r)dx\nonumber\\
& &-\nu\|(u_n,v_n)\|_{p}^{2-\alpha-\beta-p}\int_{\Omega}h(x)\tilde{u}_n^{1-\alpha}\tilde{v}_n^{1-\beta}dx.\nonumber\end{eqnarray}
On using the above to equalities we get
\begin{eqnarray}
o(1)&=&\left(1-\frac{2-\alpha-\beta}{p}\right)\|(\tilde{u}_n,\tilde{v}_n)\|_{p}^p+\left(1-\frac{2-\alpha-\beta}{q}\right)\|(u_n,v_n)\|_p^{q-p}\|(\tilde{u}_n,\tilde{v}_n)\|_{q}^{q}\nonumber\\
& &+\left(\frac{2-\alpha-\beta}{r}-1\right)\|(u_n,v_n)\|_p^{r-p}\int_{\Omega}(\lambda f(x) \tilde{u}_n^r+\mu g(x)\tilde{v}_n^r)dx\nonumber
\end{eqnarray}
as $n\rightarrow\infty$. Therefore we have 
\begin{eqnarray}
\|(\tilde{u}_n,\tilde{v}_n)\|_{p}^p&=&\frac{p(p-2+\alpha+\beta)}{q(2-\alpha-\beta-q)}\|(u_n,v_n)\|_p^{q-p}\|(\tilde{u}_n,\tilde{v}_n)\|_{q}^{q}\nonumber\\
& &+\nu\frac{p(p-2+\alpha+\beta)}{r(r-2+\alpha+\beta)}\|(u_n,v_n)\|_{p}^{2-\alpha-\beta-p}\int_{\Omega}h(x)\tilde{u}_n^{1-\alpha}\tilde{v}_n^{1-\beta}dx+o(1)\nonumber
\end{eqnarray}
as $n\rightarrow\infty$, Thus we have $\|(\tilde{u}_n,\tilde{v}_n)\|_{p}^p\rightarrow\infty$ which is a contradiction to our assumption that $\|(\tilde{u}_n,\tilde{v}_n)\|_{p}=1$. Therefore, the sequence $\{(u_n,v_n)\}$ is bounded in $X$.\\
\noindent We choose a subsequence to this bounded sequence, still denoted by $\{(u_n,v_n)\}$ such that 
$$(u_n,v_n)\rightharpoonup(u,v)~\text{in}~X,$$
$$u_n\rightarrow u,~~~v_n\rightarrow v~\text{in}~L^s(\Omega),~1\leq s<p^*,$$
$$\int_{\Omega}(\lambda f(x)u_n^r+\mu g(x)v_n^r)dx\rightarrow \int_{\Omega}(\lambda f(x)u^r+\mu g(x)v^r)dx,$$
$$\nu\int_{\Omega}h(x)u_n^{1-\alpha}v_n^{1-\beta}dx\rightarrow\nu\int_{\Omega}h(x)u^{1-\alpha}v^{1-\beta}dx$$
as $n\rightarrow\infty$.\\
By the Brezis-Lieb \cite{li4} theorem we get
$$\|(u_n-u,v_n-v)\|_p^p=\|(u_n,v_n)\|_p^p-\|(u,v)\|_p^p+o(1),$$
\begin{eqnarray}
\int_{\Omega}(\lambda f(x)(u_n-u)^r+\mu g(x)(v_n-v)^r)dx&=&\int_{\Omega}(\lambda f(x)u_n^r+\mu g(x)v_n^r)dx-\nonumber\\
&& \int_{\Omega}(\lambda f(x)u^r+\mu g(x)v^r)dx+o(1)\nonumber
\end{eqnarray}
and
\begin{eqnarray}
\nu\int_{\Omega}h(x)(u_n-u)^{1-\alpha}(v_n-v)^{1-\beta}dx&=&\nu\int_{\Omega}h(x)u_n^{1-\alpha}v_n^{1-\beta}dx-\nu\int_{\Omega}h(x)u^{1-\alpha}v^{1-\beta}dx+o(1).\nonumber
\end{eqnarray}
Thus for any $(\phi_2,\phi_2)\in X$ the following holds.
$$\lim_{n\rightarrow\infty}\langle I'_{\alpha,\beta},(\phi_2,\phi_2)\rangle=\langle I'_{\alpha,\beta}(u,v),(\phi_1,\phi_2)\rangle=0.$$
In other words $(u,v)$ is a critical point of $I_{\alpha,\beta}$.
All we now need to show is that $(u_n,v_n)\rightarrow(u,v)$ in $X$. We use \eqref{PS1}, the Brezis-Lieb lemma \cite{li4} and some basic functional analysis to obtain 
\begin{eqnarray}\label{PS2}
c-I_{\alpha,\beta}+o(1)&=&\frac{1}{p}\|(u_n-u,v_n-v)\|_p^p+\frac{1}{q}\|(u_n-u,v_n-v)\|_q^q\nonumber\\
& &-\frac{1}{r}\int_{\Omega}(\lambda f(x)(u_n-u)^r+\mu g(x)(v_n-v)^r)dx
\end{eqnarray}
and 
\begin{eqnarray}\label{PS3}
0&=&\langle I'_{\alpha,\beta}(u_n,v_n),(u_n-u,v_n-v) \rangle\nonumber\\
&=&\langle I'_{\alpha,\beta}(u_n,v_n)-I'_{\alpha,\beta}(u,v),(u_n-u,v_n-v) \rangle\nonumber\\
&=&\|(u_n-u,v_n-v)\|_p^p+\|(u_n-u,v_n-v)\|_q^q-\int_{\Omega}(\lambda f(x)(u_n-u)^r+\mu g(x)(v_n-v)^r)dx+o(1).\nonumber\\
\end{eqnarray}
Now without loss of generality, we let
$$\|(u_n-u,v_n-v)\|_p^p=c'+o(1),~\|(u_n-u,v_n-v)\|_q^q=d'+o(1)$$
and therefore $$\int_{\Omega}(\lambda f(x)(u_n-u)^r+\mu g(x)(v_n-v)^r)dx=c'+d'+o(1).$$
Now if $c'=0$ the proof is immediate. On the contrary, we assume that $c'>0$.
\begin{eqnarray}
\left(\frac{c'}{2}\right)^{\frac{p}{p^*}}&\leq&\left(\frac{c'+d'}{2}\right)^{\frac{p}{p^*}}\nonumber\\
&=& \lim_{n\rightarrow\infty}\int_{\Omega}(\lambda f(x)(u_n-u)^r+\mu g(x)(v_n-v)^r)dx\nonumber\\
&\leq&M' \lim_{n\rightarrow\infty}\int_{\Omega}\left(\lambda|u_n-u|^{r}+\mu|v_n-v|^{r}\right)dx\nonumber\\
&\leq&M'\lim_{n\rightarrow\infty}|\Omega|^{\frac{1}{2-\alpha-\beta}-\frac{1}{r}}S_{\alpha,\beta}^{-\frac{r}{p}}\|(u_n-u,v_n-v)\|_p^{r}\nonumber\\
&=&M'|\Omega|^{\frac{1}{p}-\frac{1}{r}}S_{\alpha,\beta}^{-\frac{r}{p}} (\lambda^{\frac{r}{r-p}}+\mu^{\frac{r}{r-p}})c'^{\frac{r}{p}}.\nonumber
\end{eqnarray}
Thus, $$c'\geq\frac{S_{\alpha,\beta}^{\frac{r}{r-p}}}{\{2M'(\lambda^{\frac{r}{r-p}}+\mu^{\frac{r}{r-p}})\}^{\frac{p}{r-p}}|\Omega|^{\frac{1}{r}}}=\frac{S_{\alpha,\beta}^{\frac{r}{r-p}}}{\Lambda}.$$
Therefore from \eqref{PS2}, \eqref{PS3} and $(u,v)\in\mathcal{N}_{\alpha,\beta}\bigcup\{(0,0)\}$ we have
\begin{eqnarray}
c'&=&I_{\alpha,\beta}(u,v)+\frac{c'}{p}+\frac{d'}{q}-\frac{c'+d'}{r}\nonumber\\
&\geq&\frac{S_{\alpha,\beta}^{\frac{r}{r-p}}}{\Lambda}-\nu A_0\left[\left(\frac{1-\alpha}{2-\alpha-\beta}\right)^{\frac{p}{p+\alpha+\beta-2}}+\left(\frac{1-\beta}{2-\alpha-\beta}\right)^{\frac{p}{p+\alpha+\beta-2}}\right]\nonumber
\end{eqnarray}
which contradicts $c'<\frac{S_{\alpha,\beta}^{\frac{r}{r-p}}}{\Lambda}-\nu A_0\left[\left(\frac{1-\alpha}{2-\alpha-\beta}\right)^{\frac{p}{p+\alpha+\beta-2}}+\left(\frac{1-\beta}{2-\alpha-\beta}\right)^{\frac{p}{p+\alpha+\beta-2}}\right]$. Thus $c'=0$ and hence $(u_n,v_n)\rightarrow(u,v)$ in $X$.
\end{proof}
\noindent We will now see the proof of the existence of a local minimizer for $I_{\alpha,\beta}$ in $\mathcal{N}_{\alpha,\beta}^+$.
\begin{lemma}\label{locmin}
There exists $\Lambda^*>0$ such that $\nu\left[\left(\frac{1-\alpha}{2-\alpha-\beta}\right)^{\frac{p}{p+\alpha+\beta-2}}+\left(\frac{1-\beta}{2-\alpha-\beta}\right)^{\frac{p}{p+\alpha+\beta-2}}\right]\in (0,\Lambda^*)$, $I_{\alpha,\beta}$ has a minimizer $(u_{\nu},v_{\nu})\in\mathcal{N}_{\alpha,\beta}^+$ and it satisfies 
\begin{align*}
&(i)~I_{\alpha,\beta}(u_{\nu},v_{\nu})=i^{+}~\text{is a weak solution to the problem \eqref{main}}\\
&(ii)~I_{\alpha,\beta}(u_{\nu},v_{\nu})\rightarrow 0~\text{and}~\|(u_{\nu},v_{\nu})\|_p\rightarrow 0,~\|(u_{\nu},v_{\nu})\|_q\rightarrow 0~\text{as}~\nu\rightarrow 0.
\end{align*}
\end{lemma}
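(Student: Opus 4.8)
The plan is the classical constrained minimization on $\mathcal{N}_{\alpha,\beta}^{+}$, the only genuine difficulty being the non-smoothness of $I_{\alpha,\beta}$ caused by the singular terms. I would begin with a minimizing sequence $\{(u_n,v_n)\}\subset\mathcal{N}_{\alpha,\beta}^{+}$ with $I_{\alpha,\beta}(u_n,v_n)\to i^{+}$. Since $I_{\alpha,\beta}$ is coercive on $\mathcal{N}_{\alpha,\beta}$, the sequence is bounded in $X$; and because $\mathcal{N}_{\alpha,\beta}^{0}=\emptyset$ by Lemma \ref{LBofI2} (so that $\mathcal{N}_{\alpha,\beta}^{+}$ is relatively open and closed in $\mathcal{N}_{\alpha,\beta}$), Ekeland's variational principle applied on $\mathcal{N}_{\alpha,\beta}^{+}$, followed by the standard Lagrange-multiplier computation (the multiplier vanishing precisely because $\mathcal{N}_{\alpha,\beta}^{0}=\emptyset$), yields a $(PS)_{i^{+}}$ sequence for $I_{\alpha,\beta}$ in $X$. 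By Lemma \ref{LBofI3} we have $i^{+}<0$, while shrinking $\Lambda^{*}$ if necessary makes $\frac{S_{\alpha,\beta}^{r/(r-p)}}{\Lambda}-\nu A_{0}\big[\big(\tfrac{1-\alpha}{2-\alpha-\beta}\big)^{\frac{p}{p+\alpha+\beta-2}}+\big(\tfrac{1-\beta}{2-\alpha-\beta}\big)^{\frac{p}{p+\alpha+\beta-2}}\big]$ strictly positive; hence $i^{+}$ lies in the admissible range of Lemma \ref{PSbounded}, so along a subsequence $(u_n,v_n)\to(u_\nu,v_\nu)$ strongly in $X$ with $I_{\alpha,\beta}(u_\nu,v_\nu)=i^{+}$.

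Next I would identify this limit as a point of $\mathcal{N}_{\alpha,\beta}^{+}$. Strong convergence passes the Nehari identity to the limit (using $r<p^{*}$ for the subcritical integral and the Egoroff/dominated-convergence argument already quoted for the singular one), so $(u_\nu,v_\nu)\in\mathcal{N}_{\alpha,\beta}\cup\{(0,0)\}$; since $i^{+}<0=I_{\alpha,\beta}(0,0)$, the limit is nontrivial, hence $(u_\nu,v_\nu)\in\mathcal{N}_{\alpha,\beta}$. Replacing $(u_\nu,v_\nu)$ by $(|u_\nu|,|v_\nu|)$ changes neither $I_{\alpha,\beta}$ nor the constraint, so we may assume $u_\nu,v_\nu\ge 0$; and neither component vanishes identically, because $(0,v_\nu)\in\mathcal{N}_{\alpha,\beta}$ would give $I_{\alpha,\beta}(0,v_\nu)=(\tfrac1p-\tfrac1r)\|v_\nu\|_p^{p}+(\tfrac1q-\tfrac1r)\|v_\nu\|_q^{q}\ge0$, contradicting $i^{+}<0$. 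Finally $\mathcal{N}_{\alpha,\beta}^{0}=\emptyset$ rules out the inflection case, and $(u_\nu,v_\nu)\in\mathcal{N}_{\alpha,\beta}^{-}$ is impossible since it would force $i^{-}\le I_{\alpha,\beta}(u_\nu,v_\nu)=i^{+}<0$, against $i^{-}\ge D_{0}>0$ from Lemma \ref{LBofI3}; hence $(u_\nu,v_\nu)\in\mathcal{N}_{\alpha,\beta}^{+}$, which gives the value statement in $(i)$.

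The substantive step, and the one I expect to be the real obstacle, is that $(u_\nu,v_\nu)$ is a weak solution of \eqref{main}; since $I_{\alpha,\beta}$ is not $C^{1}$ I would argue by one-sided perturbations. Fix $\phi_1,\phi_2\in X$ with $\phi_1,\phi_2\ge 0$; for small $\varepsilon>0$ the fiber-map lemma gives the projection $t_{\varepsilon}\in(0,t_{max})$ of $(u_\nu+\varepsilon\phi_1,v_\nu+\varepsilon\phi_2)$ onto $\mathcal{N}_{\alpha,\beta}^{+}$ (note $\int_\Omega h(x)u_\nu^{1-\alpha}v_\nu^{1-\beta}dx>0$ for $(u_\nu,v_\nu)\in\mathcal{N}_{\alpha,\beta}^{+}$, so $t_{max}$ exists for the perturbed pair), and $t_{\varepsilon}$ is continuous in $\varepsilon$ with $t_{0}=1$, so that for $\varepsilon$ small
$$i^{+}\ \le\ I_{\alpha,\beta}\big(t_{\varepsilon}(u_\nu+\varepsilon\phi_1),t_{\varepsilon}(v_\nu+\varepsilon\phi_2)\big)\ \le\ I_{\alpha,\beta}(u_\nu+\varepsilon\phi_1,v_\nu+\varepsilon\phi_2).$$
Writing $I_{\alpha,\beta}=A-\tfrac{\nu}{2-\alpha-\beta}B$, where $A$ collects all the non-singular terms (of class $C^{1}$ on $X$ since $r<p^{*}$) and $B(u,v)=\int_\Omega h(x)u^{1-\alpha}v^{1-\beta}dx$, the monotonicity of $s\mapsto s^{1-\alpha}$ and $s\mapsto s^{1-\beta}$ together with $\phi_i\ge 0$ gives $B(u_\nu+\varepsilon\phi_1,v_\nu+\varepsilon\phi_2)\ge B(u_\nu,v_\nu)$, hence already $A(u_\nu+\varepsilon\phi_1,v_\nu+\varepsilon\phi_2)-A(u_\nu,v_\nu)\ge 0$; dividing by $\varepsilon$ and letting $\varepsilon\to0^{+}$ shows $u_\nu,v_\nu$ are weak supersolutions of $-\Delta_p w-\Delta_q w\ge 0$, so by the strong maximum principle for the $p$--$q$ Laplacian $u_\nu,v_\nu>0$ in $\Omega$ and the singular integrands are a.e.\ well defined. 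Keeping now the term $B$, dividing the whole difference by $\varepsilon$, letting $\varepsilon\to0^{+}$ and applying Fatou's lemma to the nonnegative difference quotient of $B$ yields at once that $u_\nu^{-\alpha}v_\nu^{1-\beta}\phi_1,\ u_\nu^{1-\alpha}v_\nu^{-\beta}\phi_2\in L^{1}(\Omega)$ and
$$\langle I_{\alpha,\beta}'(u_\nu,v_\nu),(\phi_1,\phi_2)\rangle\ \ge\ 0\qquad\text{for all }\phi_1,\phi_2\in X,\ \phi_1,\phi_2\ge0.$$
To remove the sign restriction I would use the standard device for singular problems: for arbitrary $(\psi_1,\psi_2)\in X$ and $\varepsilon>0$, insert into the last inequality the admissible pair $\big((u_\nu+\varepsilon\psi_1)^{+},(v_\nu+\varepsilon\psi_2)^{+}\big)$, split off the contribution carried by $\{u_\nu+\varepsilon\psi_1<0\}\cup\{v_\nu+\varepsilon\psi_2<0\}$ (a set whose measure tends to $0$), use $(u_\nu,v_\nu)\in\mathcal{N}_{\alpha,\beta}$ to cancel the leading terms, divide by $\varepsilon$ and send $\varepsilon\to0$; one gets $\langle I_{\alpha,\beta}'(u_\nu,v_\nu),(\psi_1,\psi_2)\rangle\ge0$, and repeating with $-(\psi_1,\psi_2)$ gives equality for every $(\psi_1,\psi_2)\in X$, i.e.\ $(u_\nu,v_\nu)$ is a weak solution of \eqref{main}.

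It remains to prove $(ii)$. Since $(u_\nu,v_\nu)\in\mathcal{N}_{\alpha,\beta}^{+}$ we have $\Phi_{\alpha,\beta}''(1)>0$, that is
$$(r-p)\|(u_\nu,v_\nu)\|_p^{p}+(r-q)\|(u_\nu,v_\nu)\|_q^{q}<\nu(r+\alpha+\beta-2)\int_{\Omega}h(x)u_\nu^{1-\alpha}v_\nu^{1-\beta}\,dx\le C\nu\,\|(u_\nu,v_\nu)\|_p^{2-\alpha-\beta},$$
the last bound coming from Hölder's inequality and the Sobolev embedding exactly as in the proof of Lemma \ref{empty}. Dropping the nonnegative $q$-term and using $r>p$ gives $\|(u_\nu,v_\nu)\|_p^{\,p+\alpha+\beta-2}\le C\nu/(r-p)$; since $p+\alpha+\beta-2>0$, this forces $\|(u_\nu,v_\nu)\|_p\to0$ as $\nu\to0$, and then $\|(u_\nu,v_\nu)\|_q\le C'\|(u_\nu,v_\nu)\|_p\to0$ by Hölder's inequality on the bounded domain $\Omega$. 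Finally, because $f,g,h\ge0$ we have $I_{\alpha,\beta}(u_\nu,v_\nu)\le\tfrac1p\|(u_\nu,v_\nu)\|_p^{p}+\tfrac1q\|(u_\nu,v_\nu)\|_q^{q}\to0$, whereas the Nehari representation of $I_{\alpha,\beta}$ gives $I_{\alpha,\beta}(u_\nu,v_\nu)\ge-\nu\big(\tfrac1{2-\alpha-\beta}-\tfrac1r\big)\int_\Omega h(x)u_\nu^{1-\alpha}v_\nu^{1-\beta}\,dx\ge-C\nu\|(u_\nu,v_\nu)\|_p^{2-\alpha-\beta}\to0$; hence $I_{\alpha,\beta}(u_\nu,v_\nu)=i^{+}\to0$ as $\nu\to0$, which completes the plan.
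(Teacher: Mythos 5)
Your proposal is correct in substance and reaches the same conclusions, but it is organized quite differently from the paper's proof, and in several places it is more careful. For part (i) the paper takes a minimizing sequence in $\mathcal{N}_{\alpha,\beta}$, extracts a weak limit by coercivity, asserts without argument that the limit is a weak solution, proves nontriviality via the identity for $L^{\nu}_{\alpha,\beta}(u_n,v_n)=\int_\Omega h(x)u_n^{1-\alpha}v_n^{1-\beta}\,dx$ together with $i^+<0$ and weak lower semicontinuity, and then obtains $I_{\alpha,\beta}(u,v)=i^+$ and strong convergence from a chain of inequalities built on that same identity; it never invokes Lemma \ref{PSbounded}, nor does it check that the limit lies in $\mathcal{N}^+_{\alpha,\beta}$ rather than $\mathcal{N}^-_{\alpha,\beta}$. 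You instead route the argument through Ekeland's principle and the $(PS)_{i^+}$ condition of Lemma \ref{PSbounded} (correctly noting that $i^+<0$ sits below the threshold after shrinking $\Lambda^*$), and you supply the two steps the paper leaves implicit: the exclusion of $\mathcal{N}^-_{\alpha,\beta}$ via $i^-\ge D_0>0$ from Lemma \ref{LBofI3}, and, most importantly, an actual proof that the minimizer is a weak solution despite the non-differentiability of $I_{\alpha,\beta}$, via one-sided perturbations, Fatou's lemma, and the $(u_\nu+\varepsilon\psi_1)^+$ truncation device. That last block is genuinely valuable: the paper's ``Clearly $(u,v)$ is a weak solution'' hides exactly the difficulty created by the singular terms, which the paper only addresses obliquely through the cut-off functional introduced in a later remark. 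For part (ii) your argument also differs: you derive $\|(u_\nu,v_\nu)\|_p^{p+\alpha+\beta-2}\le C\nu$ directly from the defining inequality of $\mathcal{N}^+_{\alpha,\beta}$, which even gives a rate, whereas the paper first shows $I_{\alpha,\beta}(u_\nu,v_\nu)\to0$ by sandwiching between $0$ and $-\nu A_0[\cdots]$ and then reads off the norm convergence from the Nehari representation of $I_{\alpha,\beta}$; both are valid. The one soft spot you share with the paper is the use of $I'_{\alpha,\beta}(u_n,v_n)\to0$ along the minimizing sequence before positivity of the limit has been established: the Ekeland--Lagrange step and Lemma \ref{PSbounded} presuppose a meaning for $I'_{\alpha,\beta}$, which is precisely what the singularity obstructs; a fully rigorous version either works with the cut-off functional or dispenses with the $(PS)$ machinery on $\mathcal{N}^+_{\alpha,\beta}$ in favor of the weak-convergence argument that your perturbation analysis alone would support.
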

\begin{proof}
For the proof of $(i)$ we follow Hsu \cite{hsu1}, Theorem 4.2. Since $i^+=\inf_{(u,v)\in\mathcal{N}_{\alpha,\beta}}\{I_{\alpha,\beta}(u,v)\}$, there exists a sequence $(u_n,v_n)\in\mathcal{N}_{\alpha,\beta}$ such that $I_{\alpha,\beta}(u_n,v_n)\rightarrow i^+$ and $I'_{\alpha,\beta}(u_n,v_n)\rightarrow 0$ in $X^*$ as $n\rightarrow\infty$. Since the functional $I_{\alpha,\beta}$ is coercive and therefore $(u_n,v_n)$ is bounded in $X$. Thus there exists a subsequence of $(u_n,v_n)$, still denoted as $(u_n,v_n)$, such that $((u_n,v_n))\rightharpoonup (u,v)\in X$. So we have 
$$u_n\rightharpoonup u,~v_n\rightharpoonup v,$$
$$u_n\rightarrow u,~v_n\rightarrow v~\text{a.e.in}~\Omega,$$
$$u_n\rightarrow u,~v_n\rightarrow v~\text{in}~L^s(\Omega)~\text{for}~1\leq s <p^*$$
as $n\rightarrow\infty$. This implies 
$$\frac{2\nu}{2-\alpha-\beta}\int_{\Omega}h(x)u_n^{1-\alpha}v_n^{1-\beta}dx\rightarrow \frac{2\nu}{2-\alpha-\beta}\int_{\Omega}h(x)u^{1-\alpha}v^{1-\beta}dx.$$
Clearly $(u,v)$ is a weak solution of \eqref{main}. Also since $(u_n,v_n)\in\mathcal{N}_{\alpha,\beta}$ we have 
\begin{eqnarray}
L_{\alpha,\beta}^{\nu}(u_n,v_n)&=&\frac{r(2-\alpha-\beta)}{2\nu(r-2+\alpha+\beta)}\left(\frac{1}{p}-\frac{1}{r}\right)\|(u_n,v_n)\|_p^p\nonumber\\
&&+\frac{r(2-\alpha-\beta)}{2\nu(r-2+\alpha+\beta)}\left(\frac{1}{q}-\frac{1}{r}\right)\|(u_n,v_n)\|_q^q-\frac{r(2-\alpha-\beta)}{2\nu(r-2+\alpha+\beta)}I_{\alpha,\beta}(u_n,v_n)\nonumber
\end{eqnarray}
where $L_{\alpha,\beta}^{\nu}(u_n,v_n)=\int_{\Omega}h(x)u_n^{1-\alpha}v_n^{1-\beta}dx$. Also
\begin{eqnarray}
L_{\alpha,\beta}^{\nu}(u_n,v_n)&\geq&\frac{r(2-\alpha-\beta)}{2\nu(r-2+\alpha+\beta)}\left(\frac{1}{p}-\frac{1}{r}\right)\|(u,v)\|_p^p\nonumber\\
&&+\frac{r(2-\alpha-\beta)}{2\nu(r-2+\alpha+\beta)}\left(\frac{1}{q}-\frac{1}{r}\right)\|(u,v)\|_q^q-\frac{r(2-\alpha-\beta)}{2\nu(r-2+\alpha+\beta)}i^+\nonumber\\
&\geq&-\frac{r(2-\alpha-\beta)}{2\nu(r-2+\alpha+\beta)}i^+>0\nonumber
\end{eqnarray}
where we have used the lower-semicontinuity of $\|\cdot\|_p$, $\|\cdot\|_q$ and $i^+<0$. Therefore $(u,v)\neq (0,0)$. Thus we have a nontrivial weak solution. \\
{\it Claim}:~We now claim that $(u_n,v_n)\rightarrow(u,v)$ in $X$ and $I_{\alpha,\beta}(u,v)=i^+$.\\
For any $(u_0,v_0)\in\mathcal{N}_{\alpha,\beta}$ we have
\begin{eqnarray}
L_{\alpha,\beta}^{\nu}(u_0,v_0)&=&\frac{r(2-\alpha-\beta)}{2\nu(r-2+\alpha+\beta)}\left(\frac{1}{p}-\frac{1}{r}\right)\|(u_0,v_0)\|_p^p\nonumber\\
&&+\frac{r(2-\alpha-\beta)}{2\nu(r-2+\alpha+\beta)}\left(\frac{1}{q}-\frac{1}{r}\right)\|(u_0,v_0)\|_q^q-\frac{r(2-\alpha-\beta)}{2\nu(r-2+\alpha+\beta)}I_{\alpha,\beta}(u_0,v_0).\nonumber
\end{eqnarray}
Thus 
\begin{eqnarray}
i^+&\leq&I_{\alpha,\beta}(u,v)\nonumber\\
&\leq&\lim_{n\rightarrow\infty}\left[\left(\frac{1}{p}-\frac{1}{r}\right)\|(u_{n},v_{n})\|_p^p+\left(\frac{1}{q}-\frac{1}{r}\right)\|u_{n},v_{n}\|_q^q\right.\nonumber\\
& &\left.-\frac{2\nu}{2-\alpha-\beta}L_{\alpha,\beta}^{\nu}(u_n,v_n)\right]\nonumber\\
&=&I_{\alpha,\beta}(u,v)=i^+.\nonumber
\end{eqnarray}
Thus $I_{\alpha,\beta}(u,v)=i^+$. This also implies that $(u_n,v_n)\rightarrow(u,v)$ in $X$.\\
For the proof of $(ii)$ let $(u_{\nu},v_{\nu})\in\mathcal{N}_{\alpha,\beta}^{+}$. From Lemmas \ref{empty}, \ref{LBofI2}  we have that 
$$0>I_{\alpha,\beta}(u_{\nu},v_{\nu})\geq -\nu A_0\left[\left(\frac{1-\alpha}{2-\alpha-\beta}\right)^{\frac{p}{p+\alpha+\beta-2}}+\left(\frac{1-\beta}{2-\alpha-\beta}\right)^{\frac{p}{p+\alpha+\beta-2}}\right].$$
Therefore it is obvious that as $\nu\rightarrow 0$ we have $I_{\alpha,\beta}(u_{\nu},v_{\nu})\rightarrow 0$.\\
Further we have
\begin{eqnarray}
0&=&\lim_{\nu\rightarrow 0}I_{\alpha,\beta}(u_{\nu},v_{\nu})\nonumber\\
&=&\lim_{\nu\rightarrow 0}\left[\left(\frac{1}{p}-\frac{1}{r}\right)\|(u_{\nu},v_{\nu})\|_p^p+\left(\frac{1}{q}-\frac{1}{r}\right)\|u_{\nu},v_{\nu}\|_q^q\right.\nonumber\\
& &\left.-\frac{2\nu}{2-\alpha-\beta}\int_{\Omega}h(x)u_{\nu}^{1-\alpha}v_{\nu}^{1-\beta}dx\right].\nonumber
\end{eqnarray}
As seen earlier that the functional $I_{\alpha,\beta}$ is coercive over $\mathcal{N}_{\alpha,\beta}^+$ and therefore $(u_{\nu},v_{\nu})$ is bounded. Also using the fact $\lim_{\nu\rightarrow 0}\frac{2\nu}{2-\alpha-\beta}\int_{\Omega}h(x)u_{\nu}^{1-\alpha}v_{\nu}^{1-\beta}dx=0$ we clearly have
$$\lim_{\nu\rightarrow 0}\|(u_{\nu},v_{\nu})\|_p^p=0=\lim_{\nu\rightarrow 0}\|(u_{\nu},v_{\nu})\|_q^q.$$
\end{proof}
\begin{remark}\label{estirem}
For $\epsilon>0$ we define
$$u_{\epsilon}(x)=\frac{\eta(x)}{(\epsilon+|x|^{\frac{p}{p-1}})^{\frac{N-p}{p}}},~~v_{\epsilon}(x)=\frac{u_{\epsilon}(x)}{|u_{\epsilon}(x)|_{p^*}}$$
where $\eta(x)\in C_0^{\infty}(\Omega)$ is a radially symmetric function defined by
 \[\eta(x)=\begin{cases}
1& |x|<\rho_0 \\
0 & |x|>2\rho_0\\
0\leq\eta(x)\leq 1 &~\text{otherwise}.
\end{cases}\]
Further let $|\nabla\eta|\leq C$, where $\rho_0$ is such that $B(0,2\rho_0)\subset\Omega$.
 Then $\int_{\Omega}|u_{\epsilon}|^{p^*}dx=1$ and we have the following estimates
 \[\int_{\Omega}|u_{\epsilon}|^{t}dx=\begin{cases}
 C_1 \epsilon^{\frac{N(p-1)-t(N-p)}{p}}+O(1)& t>\frac{N(p-1)}{N-p} \\
 C_1|\ln\epsilon|+O(1) & t=\frac{N(p-1)}{N-p} \\
 O(1)\leq\eta(x)\leq 1 & t<\frac{N(p-1)}{N-p} .
 \end{cases}\]
 Therefore in particular we have 
 $$\int_{\Omega}|\nabla u_{\epsilon}|^pdx=K_2\epsilon^{\frac{p-N}{p}}+O(1)$$
 and
 $$\left(\int_{\Omega}|u_{\epsilon}|^{p^*}dx\right)^{\frac{1}{p^*}}=K_3\epsilon^{\frac{p-N}{p}}+O(1)$$
 where $K_1, K_2, K_3>0$ independent of $\epsilon$. Further there exists $\epsilon_0$ such that $S$, the best sobolev constant, is close to $\frac{K_2}{K_3}$ for every $0<\epsilon<\epsilon_0$. In other words we will take $S=\frac{K_2}{K_3}$.
\end{remark}
We now prove the following lemma which will be used in guaranteeing the multiplicity of solutions.
\begin{lemma}\label{auxlem1}
There exists $\epsilon_1$, $\Lambda^*$, $\sigma(\epsilon)>0$ such that for $\epsilon\in(0,\epsilon_1)$,\\ $\nu\left[\left(\frac{1-\alpha}{2-\alpha-\beta}\right)^{\frac{p}{p+\alpha+\beta-2}}+\left(\frac{1-\beta}{2-\alpha-\beta}\right)^{\frac{p}{p+\alpha+\beta-2}}\right]\in (0,\Lambda^*)$ and $\sigma\in(0,\sigma(\epsilon))$, we have 
$$\sup_{t\geq 0}I_{\alpha,\beta}(t_{\epsilon}\sqrt[p]{\nu}v_{\epsilon},t_{\epsilon}\sqrt[p]{\nu}v_{\epsilon})<c_{\alpha,\beta}-\sigma,$$
where $c_{\alpha,\beta}=\frac{r-p}{rp}S^{\frac{r}{r-p}}-\nu A_0\left[\left(\frac{1-\alpha}{2-\alpha-\beta}\right)^{\frac{p}{p+\alpha+\beta-2}}+\left(\frac{1-\beta}{2-\alpha-\beta}\right)^{\frac{p}{p+\alpha+\beta-2}}\right]$.
\end{lemma}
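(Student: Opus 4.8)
The plan is to estimate the supremum of the fibre map of $I_{\alpha,\beta}$ along the concentrating direction $w_\epsilon:=\sqrt[p]{\nu}\,v_\epsilon$, with $v_\epsilon=u_\epsilon/|u_\epsilon|_{p^*}$ the normalised truncated extremal of Remark~\ref{estirem}. First, the factor $t_\epsilon$ in the statement is immaterial: setting $s=t\,t_\epsilon$ gives $\sup_{t\ge0}I_{\alpha,\beta}(t_\epsilon\sqrt[p]{\nu}v_\epsilon,t_\epsilon\sqrt[p]{\nu}v_\epsilon)=\sup_{s\ge0}\varphi_\epsilon(s)$, where $\varphi_\epsilon(s):=I_{\alpha,\beta}(sw_\epsilon,sw_\epsilon)$. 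Since $2-\alpha-\beta<q<p<r$ and $\int_\Omega(\lambda f+\mu g)v_\epsilon^{r}\,dx>0$ (which requires the concentration point of $u_\epsilon$ to be chosen where $\lambda f+\mu g$ is positive), the function $\varphi_\epsilon$ is continuous on $[0,\infty)$, vanishes at $0$, and tends to $-\infty$ as $s\to\infty$; hence $\sup_{s\ge0}\varphi_\epsilon(s)$ is attained at some $s_\ast=s_\ast(\epsilon,\nu)\ge0$, and if $s_\ast=0$ there is nothing to prove once $\nu$ is small enough that $c_{\alpha,\beta}>0$. So assume $s_\ast>0$.

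\noindent\textbf{Separating lower-order terms.} Write $\varphi_\epsilon(s)=G_\epsilon(s)-N_\epsilon(s)$ with
\[
G_\epsilon(s)=\frac{s^{p}}{p}\|(w_\epsilon,w_\epsilon)\|_p^{p}+\frac{s^{q}}{q}\|(w_\epsilon,w_\epsilon)\|_q^{q}-\frac{s^{r}}{r}\int_\Omega(\lambda f+\mu g)w_\epsilon^{r}\,dx,\qquad N_\epsilon(s)=\frac{\nu\,s^{2-\alpha-\beta}}{2-\alpha-\beta}\int_\Omega h\,w_\epsilon^{2-\alpha-\beta}\,dx\ge0.
\]
Because $N_\epsilon\ge0$ we have $\sup_{s\ge0}\varphi_\epsilon\le\sup_{s\ge0}G_\epsilon$, so it suffices to bound $G_\epsilon$. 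A direct maximisation of the two leading powers gives
\[
\max_{s\ge0}\Bigl(\frac{s^{p}}{p}\|(w_\epsilon,w_\epsilon)\|_p^{p}-\frac{s^{r}}{r}\int_\Omega(\lambda f+\mu g)w_\epsilon^{r}\,dx\Bigr)=\frac{r-p}{rp}\,\frac{\bigl(\|(w_\epsilon,w_\epsilon)\|_p^{p}\bigr)^{\frac{r}{r-p}}}{\bigl(\int_\Omega(\lambda f+\mu g)w_\epsilon^{r}\,dx\bigr)^{\frac{p}{r-p}}},
\]
and one bounds $\sup_{s\ge0}G_\epsilon$ by this quantity plus the contribution $\frac{s_\ast^{q}}{q}\|(w_\epsilon,w_\epsilon)\|_q^{q}$, the maximiser $s_\ast$ being confined to a bounded range since $G_\epsilon'(s_\ast)=0$. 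The scaling $w_\epsilon=\sqrt[p]{\nu}v_\epsilon$ is chosen exactly so that all powers of $\nu$ cancel in the ratio above, leaving $\dfrac{r-p}{rp}\Bigl(\dfrac{2\|v_\epsilon\|_p^{p}}{(\int_\Omega(\lambda f+\mu g)v_\epsilon^{r}\,dx)^{p/r}}\Bigr)^{\frac{r}{r-p}}$, which is independent of $\nu$.

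\noindent\textbf{Concentration estimate and parameters.} Inserting the asymptotics of Remark~\ref{estirem} (recall $p,r>\tfrac{N(p-1)}{N-p}>q$, so the $q$-Laplacian term and the $L^{r}$-remainder of $v_\epsilon$ are genuinely of lower order), the Brezis--Nirenberg / Ding--Xiao type computation yields $\epsilon_1>0$ such that, for $\epsilon\in(0,\epsilon_1)$,
\[
\sup_{s\ge0}G_\epsilon(s)\le\frac{r-p}{rp}\,S^{\frac{r}{r-p}}-2\kappa(\epsilon)
\]
for some $\kappa(\epsilon)>0$; the gap $\kappa(\epsilon)$ arises from the Sobolev quotient of $u_\epsilon$ being strictly below $S$ after subtracting the loss produced by the truncation of $\eta$, once the lower-order $q$-correction has been absorbed. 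Since the right-hand side of the claim is $c_{\alpha,\beta}-\sigma=\frac{r-p}{rp}S^{\frac{r}{r-p}}-\nu A_0\bigl[(\tfrac{1-\alpha}{2-\alpha-\beta})^{\frac{p}{p+\alpha+\beta-2}}+(\tfrac{1-\beta}{2-\alpha-\beta})^{\frac{p}{p+\alpha+\beta-2}}\bigr]-\sigma$ (with $A_0$ the constant of Lemma~\ref{empty}), it is enough to fix $\Lambda^{*}>0$ so small that $\nu\bigl[(\tfrac{1-\alpha}{2-\alpha-\beta})^{\frac{p}{p+\alpha+\beta-2}}+(\tfrac{1-\beta}{2-\alpha-\beta})^{\frac{p}{p+\alpha+\beta-2}}\bigr]\in(0,\Lambda^{*})$ forces $\nu A_0[\cdots]<\kappa(\epsilon)$, and then to set $\sigma(\epsilon):=\kappa(\epsilon)$; for any $\sigma\in(0,\sigma(\epsilon))$ one then gets $\sup_{s\ge0}\varphi_\epsilon(s)\le\sup_{s\ge0}G_\epsilon(s)\le\frac{r-p}{rp}S^{\frac{r}{r-p}}-2\kappa(\epsilon)<c_{\alpha,\beta}-\sigma$.

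\noindent\textbf{Main obstacle.} The crux is the concentration estimate together with the order in which the three parameters are chosen: first $\epsilon$, to open the gap $\kappa(\epsilon)>0$ in the purely critical part and to make the $q$-Laplacian correction and the $L^{r}$-remainder small; then $\Lambda^{*}$, so that the subtracted term $\nu A_0[\cdots]$ (which carries the whole $\nu$-dependence of the target level) is dominated by $\kappa(\epsilon)$; and only then $\sigma(\epsilon)=\kappa(\epsilon)$. One also has to keep $\int_\Omega(\lambda f+\mu g)v_\epsilon^{r}\,dx$ bounded below away from $0$ as $\epsilon\to0$, which is the reason for locating the concentration point where $\lambda f+\mu g>0$. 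Apart from these points the remaining estimates are routine expansions in the spirit of Hsu~\cite{hsu1}, Yin--Yang~\cite{yin3} and Li--Yang~\cite{li2}.
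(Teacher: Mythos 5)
There is a genuine gap, and it sits at the heart of the argument. You discard the singular term by writing $\sup_{s\ge0}\varphi_\epsilon\le\sup_{s\ge0}G_\epsilon$ and then try to produce the strict gap $2\kappa(\epsilon)$ from $G_\epsilon$ alone, attributing it to ``the Sobolev quotient of $u_\epsilon$ being strictly below $S$''. But $S$ is an infimum, so the quotient of the truncated extremal is strictly \emph{above} $S$: by Remark \ref{estirem}, $\int_\Omega|\nabla v_\epsilon|^p\,dx=S+O(\epsilon^{\frac{N-p}{p}})$ with a nonnegative correction, and the $q$-Laplacian term contributes a further $+O(\epsilon^{\frac{q(N-p)}{p^2}})$. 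Hence $\sup_{s\ge 0}G_\epsilon(s)\ge \frac{r-p}{rp}S^{\frac{r}{r-p}}$, which already exceeds the target level $c_{\alpha,\beta}-\sigma<\frac{r-p}{rp}S^{\frac{r}{r-p}}$; no choice of $\epsilon_1$, $\Lambda^*$, $\sigma(\epsilon)$ can rescue the inequality once $N_\epsilon$ has been thrown away.

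The paper's proof does the opposite: it keeps the negative singular contribution, pins the maximiser $t_\epsilon$ in a fixed interval $[T_1,T_2]$ independent of $\epsilon$ (the lower bound $T_1$ being essential so that the coefficient of the negative term does not degenerate), and expands
\begin{equation*}
a(t_\epsilon)\le \frac{r-p}{rp}S^{\frac{r}{r-p}}+O\bigl(\epsilon^{\frac{N-p}{p}}\bigr)+O\bigl(\epsilon^{\frac{q(N-p)}{p^2}}\bigr)-O\bigl(\epsilon^{\frac{(2-\alpha-\beta)(N-p)}{p^2}}\bigr).
\end{equation*}
The whole point of hypothesis (C) is the ordering $\frac{(2-\alpha-\beta)(N-p)}{p^2}<\frac{q(N-p)}{p^2}<\frac{N-p}{p}$, which makes the negative term the one of lowest order in $\epsilon$ and therefore dominant as $\epsilon\to0$; this is what pushes the level strictly below $c_{\alpha,\beta}$, and it is exactly the ingredient your reduction removes. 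Your remaining observations --- the rescaling in $t$, the confinement of the maximiser, the order in which $\epsilon$, $\Lambda^*$ and $\sigma$ are chosen --- are consistent with the paper, but the source of negativity must be the concave/singular term, not the Sobolev quotient.
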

\begin{proof}
Define
\begin{eqnarray}
a(t)&=&I_{\alpha,\beta}(t\sqrt[p]{\nu}v_{\epsilon},t\sqrt[p]{\nu}v_{\epsilon})\nonumber\\
&=&\frac{t^p}{p}\nu\int_{\Omega}|\nabla v_{\epsilon}|^pdx+\frac{t^q}{q}(2\nu^{\frac{q}{p}})\int_{\Omega}|\nabla v_{\epsilon}|^qdx\nonumber\\
&&-\frac{1}{r}\int_{\Omega}(\lambda f(x)+\mu g(x))(tv_{\epsilon}\nu^{\frac{1}{p}})^rdx-\frac{2\nu^{\frac{p-\alpha-\beta+2}{p}} t^{2-\alpha-\beta}}{2-\alpha-\beta}\int_{\Omega}h(x)v_{\epsilon}^{2-\alpha-\beta}dx.\nonumber
\end{eqnarray}
Clearly $a(0)=0$, $\lim_{t\rightarrow\infty}a(t)=-\infty$. Therefore there exists $t_{\epsilon}>0$ such that 
$$I_{\alpha,\beta}(t_{\epsilon}\sqrt[p]{\nu}v_{\epsilon},t_{\epsilon}\sqrt[p]{\nu}v_{\epsilon})=\sup_{t\geq 0}I_{\alpha,\beta}(t\sqrt[p]{\nu}v_{\epsilon},t\sqrt[p]{\nu}v_{\epsilon}).$$
This yields that 
\begin{eqnarray}\label{equ1}
(2\nu)t_{\epsilon}^{p-1}\int_{\Omega}|\nabla v_{\epsilon}|^{p}dx+2t_{\epsilon}^{q-1}\nu^{\frac{q}{p}}\int_{\Omega}|\nabla v_{\epsilon}|^qdx&=&t_{\epsilon}^{r-1}\int_{\Omega}(\lambda f(x)+\mu g(x))(v_{\epsilon}\nu^{\frac{1}{p}})^rdx\nonumber\\
&&+2\nu^{\frac{p-\alpha-\beta+2}{p}} t_{\epsilon}^{1-\alpha-\beta}\int_{\Omega}h(x)v_{\epsilon}^{2-\alpha-\beta}dx.\nonumber\\
\end{eqnarray}
From \eqref{equ1} we have the following
\begin{eqnarray}\label{equ2}
t_{\epsilon}^{p+\alpha+\beta-2}\int_{\Omega}|\nabla v_{\epsilon}|^pdx&\leq&t_{\epsilon}^{r+\alpha+\beta-2}\int_{\Omega}(\lambda f(x)+\mu g(x))(v_{\epsilon}\nu^{\frac{1}{p}})^rdx\nonumber\\
&&+2\nu^{\frac{p-\alpha-\beta+2}{p}} \int_{\Omega}h(x)v_{\epsilon}^{2-\alpha-\beta}dx.\nonumber\\
\end{eqnarray} 
and 
\begin{eqnarray}\label{equ3}
(2\nu)t_{\epsilon}^{p-q}\int_{\Omega}|\nabla v_{\epsilon}|^{p}dx+2\nu^{\frac{q}{p}}\int_{\Omega}|\nabla v_{\epsilon}|^qdx&\geq&t_{\epsilon}^{r-q}\int_{\Omega}(\lambda f(x)+\mu g(x))(v_{\epsilon}\nu^{\frac{1}{p}})^rdx.\nonumber\\
\end{eqnarray}
From the estimates for $u_{\epsilon}$ obtained in the Remark \ref{estirem}, i.e.
$$\int_{\Omega}|\nabla v_{\epsilon}|^pdx=S+O(\epsilon^{\frac{N-p}{p}}),~\int_{\Omega}| v_{\epsilon}|^rdx=O(\epsilon^{\frac{r(N-p)}{p^2}}),~\int_{\Omega}| v_{\epsilon}|^{2-\alpha-\beta}dx=O(\epsilon^{\frac{(2-\alpha-\beta)(N-p)}{p^2}}).$$ 
From \eqref{equ1} it very easily follows now that 
\begin{eqnarray}
t_{\epsilon}^{p+\alpha+\beta-2}(S+O(\epsilon^{\frac{N-p}{p}}))&\leq&CM't_{\epsilon}^{r+\alpha+\beta-2}+2M\nu^{\frac{p-\alpha-\beta+2}{p}}O(\epsilon^{\frac{(2-\alpha-\beta)(N-p)}{p^2}})\nonumber\\
\end{eqnarray} 
where we have use the estimate 
$$\int_{\Omega}(\lambda f(x)+\mu g(x))v_{\epsilon}^rdx\leq CM'\|v_{\epsilon}\|_{p^*}^r=CM'.$$
Thus, there exists $T_1>0$, $\epsilon_1>0$ such that for any $\epsilon\in(0,\epsilon_1)$, we have $t_{
\epsilon}\geq T_1$. 
Likewise we have
\begin{eqnarray}\label{eq2}
(2\nu)t_{\epsilon}^{p-q}(S+O(\epsilon^{\frac{N-p}{p}}))+2C\nu^{\frac{q}{p}}&\geq&Ct_{\epsilon}^{2-\alpha-\beta-q}.\nonumber\\
\end{eqnarray}
Then, there exists $T_2>0$, $\epsilon_2>0$ such that for any $\epsilon\in(0,\epsilon_2)$, we have $t_{\epsilon}\leq T_2$. Let $\tilde{\epsilon}=\min\{\epsilon_1,\epsilon_2\}$. Then for any $\epsilon\in(0,\tilde{\epsilon})$ we have $T_1\leq t_{\epsilon}\leq T_2$. 
Consider
$$b(t)=\frac{t^p}{p}\nu\int_{\Omega}|\nabla v_{\epsilon}|^{p}dx-\frac{1}{r}\int_{\Omega}(\lambda f(x)+\mu g(x))(tv_{\epsilon}\nu^{\frac{1}{p}})^rdx.$$
Then a simple calcultaion gives  
\begin{eqnarray}
\sup_{t\geq 0}b(t)&=&\frac{r-p}{rp}S^{\frac{r}{r-p}}+O(\epsilon^{\frac{N-p}{p}}).\nonumber
\end{eqnarray}
Therefore, for any $\epsilon\in(0,\tilde{\epsilon})$, we have 
\begin{eqnarray}
a(t_{\epsilon})&=&b(t_{\epsilon})+\frac{t_{\epsilon}^q}{q}(\nu^{\frac{q}{p}})\int_{\Omega}|\nabla v_{\epsilon}|^qdx\nonumber\\
&&-\frac{\nu^{\frac{p-\alpha-\beta+2}{p}} t_{\epsilon}^{2-\alpha-\beta}}{2-\alpha-\beta}\int_{\Omega}h(x)v_{\epsilon}^{2-\alpha-\beta}dx\nonumber\\
&\leq&\frac{r-p}{rp}S^{\frac{r}{r-p}}+O(\epsilon^{\frac{N-p}{p}})+\frac{t_{\epsilon}^q}{q}(2\nu^{\frac{q}{p}})\int_{\Omega}|\nabla v_{\epsilon}|^qdx\nonumber\\
&&-\frac{\nu^{\frac{p-\alpha-\beta+2}{p}} t_{\epsilon}^{2-\alpha-\beta}}{2-\alpha-\beta}\int_{\Omega}h(x)v_{\epsilon}^{2-\alpha-\beta}dx\nonumber\\
&\leq&\frac{r-p}{rp}S^{\frac{r}{r-p}}+O(\epsilon^{\frac{N-p}{p}})+\frac{T_2^q}{q}(2\nu^{\frac{q}{p}})\int_{\Omega}|\nabla v_{\epsilon}|^qdx\nonumber\\
&&-\frac{\nu^{\frac{p-\alpha-\beta+2}{p}} T_1^{2-\alpha-\beta}}{2-\alpha-\beta}\int_{\Omega}h(x)v_{\epsilon}^{2-\alpha-\beta}dx\nonumber\\
&\leq&\frac{r-p}{rp}S^{\frac{r}{r-p}}+O(\epsilon^{\frac{N-p}{p}})+O(\epsilon^{\frac{q(N-p)}{p^2}})-O(\epsilon^{\frac{(2-\alpha-\beta)(N-p)}{p^2}}).\nonumber
\end{eqnarray}
From the assumptions in the problem in \eqref{main} we also have 
$$0<\frac{(2-\alpha-\beta)(N-p)}{p^2}<\frac{q(N-p)}{p^2}<\frac{N-p}{p}.$$
Therefore, one can choose $\epsilon_1>0$, sufficiently small, $\Lambda^*$, $\sigma(\epsilon)>0$ such that for $\epsilon\in(0,\epsilon_1)$, $\nu\left[\left(\frac{1-\alpha}{2-\alpha-\beta}\right)^{\frac{p}{p+\alpha+\beta-2}}+\left(\frac{1-\beta}{2-\alpha-\beta}\right)^{\frac{p}{p+\alpha+\beta-2}}\right]\in (0,\Lambda^*)$ and $\sigma\in(0,\sigma(\epsilon))$, we have 
$$O(\epsilon^{\frac{N-p}{p}})+O(\epsilon^{\frac{q(N-p)}{p^2}})-O(\epsilon^{\frac{(2-\alpha-\beta)(N-p)}{p^2}})<-A_0\nu\left[\left(\frac{1-\alpha}{2-\alpha-\beta}\right)^{\frac{p}{p+\alpha+\beta-2}}+\left(\frac{1-\beta}{2-\alpha-\beta}\right)^{\frac{p}{p+\alpha+\beta-2}}\right]-\sigma.$$
\end{proof}
\section{Few useful lemmas}
This section is devoted to recall and prove some important lemmas which are crucial to the proof of the main theorem. We first consider a submanifold of $\mathcal{N}_{\alpha,\beta}^-$ defined as follows.
$$\mathcal{N}_{\alpha,\beta}^-(c_{\alpha,\beta})=\{(u,v)\in\mathcal{N}_{\alpha,\beta}^-:I_{\alpha,\beta}(u,v)\leq c_{\alpha,\beta}\}.$$
The main result which we will prove in this section is that the problem in \eqref{main} admits at least $\text{cat}(\Omega)$ number of solutions in this set.
\begin{definition}\label{cat}
(a)~For a topological space $X$, we say that a non-empty, closed subspace $Y\subset X$ is contractible to a point if and only if there exists a continuous mapping $$\xi:[0,1]\times Y\rightarrow X$$
such that for some $x_0\in X$. there hold
$$\xi(0,x)=x,~\text{for all}~x\in Y$$
and 
$$\xi(1,x)=x_0,~\text{for all}~x\in Y.$$
(b)~If $Y$ is closed subset of a topological space $X$, $\text{cat}_{X}(Y)$ denotes Lusternik-Schnirelman category of $Y$, i.e., the least number of closed and contractible sets in $X$ which cover $Y$.
\end{definition}
\noindent We now state an auxilliary lemma which can be found in the form of Theorem 1 in \cite{alv1}.
\begin{lemma}\label{mainauxthm}
Suppose that $X$ is a $C^{1,1}$ complete Riemanian manifold and $I\in C^1(X,\mathbb{R})$. Assume that for $c_0\in\mathbb{R}$ and $k\in\mathbb{N}$:
\begin{eqnarray}
(i)~I~\text{satisfies the}~(PS)_c~\text{condition for}~c\leq c_0\nonumber\\
(ii)~\text{cat}{(u\in X:I(u)\leq c_0)}\geq k.
\end{eqnarray}
Then $I$ has at least $k$ critical points in ${u\in X:I(u)\leq c_0}$.
\end{lemma}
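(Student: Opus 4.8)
The final statement to be proven is Lemma~\ref{mainauxthm}, which is an abstract critical point theorem stated as being available from \cite{alv1}. Since the excerpt explicitly says this lemma ``can be found in the form of Theorem 1 in \cite{alv1}'', the natural approach is to treat it as a citation and give only the standard deformation-theoretic argument, rather than a self-contained proof from scratch.

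\textbf{Proof plan.}
The plan is to deduce the result from the classical Lusternik--Schnirelman theory combined with a quantitative deformation lemma. First I would recall that, since $X$ is a $C^{1,1}$ complete Riemannian manifold and $I\in C^1(X,\mathbb{R})$ satisfies $(PS)_c$ for every $c\le c_0$, the negative gradient flow of $I$ (or a suitable pseudo-gradient flow, constructed via a partition of unity on the manifold) is well defined and yields the standard deformation lemma: if $c\le c_0$ is not a critical value, then for small $\varepsilon>0$ the sublevel set $\{I\le c+\varepsilon\}$ deformation-retracts into $\{I\le c-\varepsilon\}$, and more generally any compact set can be pushed below $c-\varepsilon$ staying inside $X$. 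The $(PS)_c$ hypothesis for all $c\le c_0$ is exactly what makes the set of critical values in $(-\infty,c_0]$ closed and guarantees the flow does not ``leak'' to infinity in that range.

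\textbf{Key steps.}
Define, for $j=1,\dots,k$, the minimax levels
$$c_j=\inf_{\substack{A\subset X \\ \mathrm{cat}_X(A)\ge j}}\ \sup_{u\in A} I(u),$$
taken over closed sets $A\subset\{I\le c_0\}$. By hypothesis $(ii)$, the set $\{I\le c_0\}$ itself has category at least $k$, so each $c_j$ is well defined and $c_1\le c_2\le\cdots\le c_k\le c_0$. The first step is to show each $c_j$ is a critical value: if some $c_j$ were regular, apply the deformation lemma at level $c_j$ to a near-optimal set $A$ with $\mathrm{cat}_X(A)\ge j$ and $\sup_A I< c_j+\varepsilon$, obtaining $\eta(1,A)\subset\{I\le c_j-\varepsilon\}$; since $\eta(1,\cdot)$ is continuous and homotopic to the identity within $X$, the monotonicity property of category gives $\mathrm{cat}_X(\eta(1,A))\ge\mathrm{cat}_X(A)\ge j$, contradicting the definition of $c_j$ as an infimum. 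The second step handles multiplicity: if $c_j=c_{j+1}=\cdots=c_{j+m}=c$ for some run of equal values, a standard refinement of the same argument (using that the category of the critical set $K_c$ controls how much the sublevel sets can collapse) shows $\mathrm{cat}_X(K_c)\ge m+1$, hence $K_c$ is infinite or at least contains $m+1$ points; combining over all distinct values among $c_1,\dots,c_k$ yields at least $k$ critical points, all lying in $\{I\le c_0\}$.

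\textbf{Main obstacle.}
The only genuine subtlety is that $X$ here is a manifold (in the application, the Nehari submanifold $\mathcal{N}_{\alpha,\beta}^-(c_{\alpha,\beta})$), not a Banach space, so one must be careful that the pseudo-gradient vector field is tangent to $X$ and that the flow stays on $X$ for all time — this is where $C^{1,1}$-completeness of the Riemannian structure is used, ensuring global existence of the flow and validity of the deformation lemma on the manifold. A secondary point is the precise behaviour of the deformation near the boundary values $c_0$: since $(PS)_c$ is assumed only for $c\le c_0$, one works with sublevel sets strictly inside $\{I\le c_0\}$ and the minimax values $c_j$ automatically satisfy $c_j\le c_0$, so no issue arises at the top. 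As this is quoted verbatim from \cite{alv1}, I would not reproduce the full technical construction of the flow; I would state the deformation lemma, cite it, and present the two minimax steps above, which are the content of the proof.
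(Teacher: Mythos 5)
The paper offers no proof of this lemma at all: it is stated purely as a quotation of Theorem 1 of \cite{alv1}, so your decision to treat it as a citation is exactly the paper's approach. The minimax/deformation sketch you add on top (the levels $c_j=\inf_{\mathrm{cat}_X(A)\ge j}\sup_A I$, the pseudo-gradient deformation justified by $(PS)_c$ for $c\le c_0$, and the multiplicity count via $\mathrm{cat}_X(K_c)\ge m+1$ when levels coincide) is the standard and correct argument for this classical result, so there is nothing to object to.
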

\noindent The following lemma is a standard one and can be proved if one works in the lines of the argument in \cite{will}.
\begin{lemma}\label{catlemma}
Let $\{(u_n,v_n)\}\subset X$ be a nonnegative sequence  of functions with $\int_{\Omega}(u_n^{r}+v_n^{r})dx=1$ and $\|(u_n,v_n)\|_{p}^p\rightarrow S_{\alpha,\beta}$. Then there exists a sequence $\{(y_n,\theta_n)\}\subset \mathbb{R}^N\times\mathbb{R}^+$ such that 
$$\omega_n(x)=(\omega_n^1(x),\omega_n^2(x))=\theta_n^{\frac{N}{r}}(u_n(\theta_n x+y_n),v_n(\theta_n x+y_n))$$
contains a convergent subsequence denoted again by $\{\omega_n\}$ such that 
$$\omega_n\rightarrow \omega~\text{in}~W^{1,p}(\mathbb{R}^N\times W^{1,p}(\mathbb{R}^N),$$
where $\omega=(\omega^1,\omega^2)>0$ in $\mathbb{R}^N$. Moreover, we have $\theta_n\rightarrow 0$ and $y_n\rightarrow y\in\overline{\Omega}$ as $n\rightarrow\infty$.
\end{lemma}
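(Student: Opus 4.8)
The plan is to adapt the concentration–compactness argument of Willem \cite{will} to the vector-valued setting. First I would recall that the sequence $\{(u_n,v_n)\}$ satisfies $\int_\Omega(u_n^r+v_n^r)\,dx=1$ (extending each function by zero outside $\Omega$, so that we may work on all of $\mathbb R^N$) and $\|(u_n,v_n)\|_p^p\to S_{\alpha,\beta}$, which by definition \eqref{sobolev const2} of $S_{\alpha,\beta}$ means $\{(u_n,v_n)\}$ is a minimizing sequence for the Sobolev quotient. Normalizing by the $L^r$-mass rather than by $L^{p^*}$ is only a cosmetic difference here since the exponent driving the concentration is still the critical one; I would first reduce to the genuinely critical normalization by noting $\int_\Omega(u_n^{p^*}+v_n^{p^*})$ stays bounded away from $0$ and $\infty$ along the minimizing sequence (otherwise the quotient would blow up, contradicting convergence to the finite value $S_{\alpha,\beta}$).

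The core step is to apply P.-L. Lions' concentration–compactness lemma to the sequence of measures $\rho_n=|\nabla u_n|^p+|\nabla v_n|^p$ and $\zeta_n=|u_n|^{p^*}+|v_n|^{p^*}$ (or the $L^r$-analogue). Minimality of the quotient forces the "dichotomy" and "vanishing" alternatives to fail, so $\zeta_n$ must concentrate at a single point: there is a sequence $y_n\in\mathbb R^N$ and a scale $\theta_n>0$ so that, after the rescaling $\omega_n(x)=\theta_n^{N/r}\big(u_n(\theta_n x+y_n),v_n(\theta_n x+y_n)\big)$, the rescaled sequence no longer vanishes and no longer splits. The exponent $N/r$ in the rescaling is exactly the one that leaves the constraint $\int(\omega_n^1{}^r+\omega_n^2{}^r)=1$ invariant (this is where the $r$-normalization of the hypothesis is used). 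One then shows $\{\omega_n\}$ is bounded in $W^{1,p}(\mathbb R^N)\times W^{1,p}(\mathbb R^N)$, extracts a weak limit $\omega=(\omega^1,\omega^2)$, and uses the no-dichotomy/no-vanishing conclusion together with the sharp Sobolev inequality to upgrade weak convergence to strong convergence in $W^{1,p}\times W^{1,p}$; the limit $\omega$ is then an extremal for $S_{\alpha,\beta}$, hence (up to choosing the representative and using the strong maximum principle applied componentwise to the $p$-Laplacian system it solves) $\omega^1,\omega^2>0$ on $\mathbb R^N$.

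Finally, for the location statements: since the profiles concentrate and the original functions are supported in $\Omega$, the scales $\theta_n$ must shrink, $\theta_n\to 0$, for otherwise the rescaled supports would not be able to carry the full unit mass in a fixed ball; and the concentration points $y_n$ must, up to a subsequence, converge to some $y$ in the closure $\overline\Omega$, since $y_n\in\Omega$ (a compact-closure argument—if $y_n$ escaped to infinity or stayed a fixed distance outside $\overline\Omega$ the rescaled mass would leak out of any region where $\omega$ could be nonzero).

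The main obstacle I anticipate is handling the \emph{coupled} nature of the problem in the concentration–compactness step: the two components $u_n$ and $v_n$ must be shown to concentrate at the \emph{same} point and \emph{same} scale, not at two independent locations. This is forced by the fact that the functional $S_{\alpha,\beta}$ is defined with a \emph{single} common scaling of the pair $(u,v)$ and the constraint mixes the two components additively; a careful bookkeeping of the mass of each component, combined with the strict subadditivity of $t\mapsto t^{p/p^*}$, rules out a genuine splitting between the two components, but making this rigorous (rather than just invoking the scalar result twice) is the delicate point. Once that is settled, the passage to strong convergence and the positivity and localization of the limit follow by standard arguments as in \cite{will}.
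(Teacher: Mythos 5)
The paper itself offers no proof of this lemma; it is merely asserted to be ``standard'' with a pointer to \cite{will}, so your concentration--compactness outline is exactly the route the authors are gesturing at, and at the level of strategy there is nothing to compare against. The substantive problem lies in the step you dismiss as cosmetic. The constant $S_{\alpha,\beta}$ is defined through the critical quotient with $L^{p^*}$ in the denominator, while the hypothesis normalizes the $L^r$ mass with $r<p^*$ strictly (condition (C)). These are genuinely different Rayleigh quotients: from $\int_\Omega(u_n^r+v_n^r)\,dx=1$ and $\|(u_n,v_n)\|_p^p\to S_{\alpha,\beta}$ one gets, via H\"older and Sobolev, that $\int_\Omega(u_n^{p^*}+v_n^{p^*})\,dx$ is bounded above and below by positive constants, but not that it tends to $1$; hence $(u_n,v_n)$ need not be a minimizing sequence for the quotient defining $S_{\alpha,\beta}$, and without minimality the exclusion of dichotomy --- the engine of the whole argument --- has nothing to run on. Relatedly, the rescaling $\omega_n(x)=\theta_n^{N/r}\bigl(u_n(\theta_n x+y_n),v_n(\theta_n x+y_n)\bigr)$ preserves the $L^r$ constraint but multiplies the gradient energy by $\theta_n^{Np/r+p-N}$, and this exponent vanishes only when $r=p^*$; for $r<p^*$ and $\theta_n\to 0$ the exponent is positive and the rescaled gradient energy degenerates to zero, so strong $W^{1,p}\times W^{1,p}$ convergence to a nontrivial positive profile cannot be reached by the scale-invariance bookkeeping you describe. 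Indeed, for $r<p^*$ the embedding $W_0^{1,p}(\Omega)\hookrightarrow L^r(\Omega)$ is compact and no concentration occurs at the $L^r$ level at all; the lemma appears to be transplanted from a critical-exponent source, and an honest proof must either correct the normalization to $p^*$ or treat the subcritical case by direct compactness, which leads to a different conclusion.

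Beyond this, two of your closing assertions are stated rather than derived: positivity of $\omega$ requires writing down the Euler--Lagrange system for the extremal and applying a strong maximum principle componentwise (for a system coupled only through the constraint one must also rule out that one component of an extremal vanishes identically); and $y_n\to y\in\overline{\Omega}$ requires showing the concentration points cannot drift away from $\overline{\Omega}$, using that $u_n,v_n$ are supported in $\Omega$ together with nonvanishing of the rescaled mass near the origin. You correctly single out the one-point, one-scale coupling of the two components as the delicate point, and strict subadditivity is the right tool there; but the normalization mismatch above is the gap that must be resolved before any of that machinery can be set in motion.
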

\noindent Upto translations, we assume that $0\in\Omega$. Moreover, we schoose $\delta>0$ small enough such that $B_{\delta}=\{x\in\mathbb{R}^N:\text{dist}(x,\partial\Omega)<\delta\}$ and the sets 
$$\Omega_{\delta}^+=\{x\in\mathbb{R}^N:\text{dist}(x,\partial\Omega)<\delta\},~\Omega_{\delta}^-=\{x\in\mathbb{R}^N:\text{dist}(x,\partial\Omega)>\delta\}$$
are both homotopically equivalent to $\Omega$. By using the idea of \cite{fan1} or \cite{li5} we define a continuous mapping $\tau:\mathcal{N}_{\alpha,\beta}^-\rightarrow\mathbb{R}^N$ by setting $$\tau(u,v)=\frac{\int_{\Omega}x(\lambda u^r+\mu v^r)dx}{\int_{\Omega}(\lambda u^r+\mu v^r)dx}.$$
\begin{remark}
As told before that the functional $I_{\alpha,\beta}$ is not a $C^1$-functional, we might fail to use some very useful techniques in variational techniques. For this we will define a {\it cut-off} functional using a subsolution (refer \cite{Evans} for a definition) to the system in \eqref{main}.
	Define,
\[   
\overline{f}(x,t,s) = 
\begin{cases}
f(x,t,s) &~\text{if}~t>\underline{u}, s>\underline{v}\\
f(x,t,\underline{v}) &~\text{if}~t>\underline{u}, s\leq\underline{v}\\
f(x,\underline{u},s) &~\text{if}~t\leq\underline{u}, s>\underline{v}\\
f(x,\underline{u}_{\lambda},s)&~\text{if}~t\leq \underline{u}, s\leq \underline{v}
\end{cases}\]
where $f(x,t,s)=\lambda f(x)t^{r-1}+\mu g(x)s^{r-1}+\nu\frac{1-\alpha}{2-\alpha-\beta}h(x)t^{-\alpha}s^{1-\beta}+\nu\frac{1-\beta}{2-\alpha-\beta}h(x)t^{1-\alpha}s^{-\beta}$ is a subsolution to \eqref{main} (the existence of such a solution can be guaranteed by the previous sections by taking $\lambda=\mu=0$ in \eqref{main}). Let $\overline{F}(x,t,s)=\int_{0}^t\int_0^s\overline{f}(x,t,s)dsdt$ and $(\underline{u},\underline{v})$. 
Define a function $\overline{I}_{\lambda}:X \times X\rightarrow\mathbb{R}$ as follows.
\begin{align}\label{fang_fnal}
\overline{I}_{\lambda}(u,v)&=\frac{1}{p}\|(u,v)\|_p^p+\frac{1}{q}\|(u,v)\|_q^q-\int_{\Omega}\overline{F}(x,u,v)dx.
\end{align}
The functional is $C^1$ (the proof follows the arguments of the Lemma 6.4 in the Appendix of \cite{ghosh}) and weakly lower semicontinuous. The way the functional has been defined, it is not difficult to see that the critical points of the fnctional corresponding to the problem \eqref{main} and that of the cut-off functional are the same.
\end{remark}
\begin{remark}
We will continue to name the cut-off functional $I_{\lambda}$ as $I_{\alpha,\beta}$.
\end{remark}
\noindent We then have the following result.
\begin{lemma}\label{taulemma}
There exists $\Lambda^*$ such that if $\nu\left[\left(\frac{1-\alpha}{2-\alpha-\beta}\right)^{\frac{p}{p+\alpha+\beta-2}}+\left(\frac{1-\beta}{2-\alpha-\beta}\right)^{\frac{p}{p+\alpha+\beta-2}}\right]\in(0,\Lambda^*)$ and $(u,v)\in\mathcal{N}_{\alpha,\beta}^-(c_{\alpha,\beta})$, then $\tau(u,v)\in\Omega_{\delta}^+$.
\end{lemma}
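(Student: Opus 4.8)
The plan is to argue by contradiction, combining the concentration–compactness description of minimizing sequences from Lemma \ref{catlemma} with the energy bound $c_{\alpha,\beta}$ built into the definition of $\mathcal{N}_{\alpha,\beta}^-(c_{\alpha,\beta})$. Suppose the claim fails: then there exist sequences $\nu_n\to 0$ in the admissible range and $(u_n,v_n)\in\mathcal{N}_{\alpha,\beta}^-(c_{\alpha,\beta})$ with $\tau(u_n,v_n)\notin\Omega_{\delta}^+$ for all $n$. First I would exploit the structure of $\mathcal{N}_{\alpha,\beta}^-$: for $(u,v)\in\mathcal{N}_{\alpha,\beta}^-$ one has the lower bound $\|(u,v)\|_p\ge\Lambda$ established in the proof of Lemma \ref{LBofI3}(2), while the constraint $I_{\alpha,\beta}(u,v)\le c_{\alpha,\beta}$ together with coercivity on $\mathcal{N}_{\alpha,\beta}$ gives an upper bound on $\|(u_n,v_n)\|_p$; hence the sequence is bounded and bounded away from zero in $X$.

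Next I would normalize. Set $L_n=\int_{\Omega}(\lambda u_n^r+\mu v_n^r)\,dx$ and write $\hat u_n=u_n/L_n^{1/r}$, $\hat v_n=v_n/L_n^{1/r}$, so that $\int_{\Omega}(\lambda\hat u_n^r+\mu\hat v_n^r)\,dx=1$; note $\tau$ is invariant under this scaling, so $\tau(\hat u_n,\hat v_n)=\tau(u_n,v_n)\notin\Omega_\delta^+$. The key point is that as $\nu_n\to 0$ the singular term drops out and the $\mathcal{N}_{\alpha,\beta}^-$-membership forces $\|(\hat u_n,\hat v_n)\|_p^p\to S_{\alpha,\beta}$. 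To see this, I would feed the Nehari identity and the energy bound $I_{\alpha,\beta}(u_n,v_n)\le c_{\alpha,\beta}=\frac{r-p}{rp}S^{\frac{r}{r-p}}-\nu_n A_0[\cdots]$ into the expression
\[
I_{\alpha,\beta}(u_n,v_n)=\Big(\tfrac1p-\tfrac1r\Big)\|(u_n,v_n)\|_p^p+\Big(\tfrac1q-\tfrac1r\Big)\|(u_n,v_n)\|_q^q+\nu_n\Big(\tfrac1r-\tfrac1{2-\alpha-\beta}\Big)\!\int_\Omega h\,u_n^{1-\alpha}v_n^{1-\beta}\,dx,
\]
together with the Sobolev inequality $\|(\hat u_n,\hat v_n)\|_p^p\ge S_{\alpha,\beta}\,(\int_\Omega|\hat u_n|^{p^*}+|\hat v_n|^{p^*})^{p/p^*}$ and the fact that $r<p^*$ controls the $L^r$-normalization. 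Matching the $S^{r/(r-p)}$ constants as $\nu_n\to 0$ pins the Sobolev quotient of $(\hat u_n,\hat v_n)$ to its infimum $S_{\alpha,\beta}$; so $(\hat u_n,\hat v_n)$ is, after the $r$-normalization, a minimizing sequence of exactly the type handled by Lemma \ref{catlemma}.

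Then I would invoke Lemma \ref{catlemma}: there are $\theta_n\to 0$, $y_n\to y\in\overline\Omega$ such that the rescaled functions $\omega_n(x)=\theta_n^{N/r}(\hat u_n(\theta_n x+y_n),\hat v_n(\theta_n x+y_n))$ converge strongly in $W^{1,p}(\mathbb R^N)\times W^{1,p}(\mathbb R^N)$ to a positive limit $\omega$. A change of variables in the barycenter then gives
\[
\tau(\hat u_n,\hat v_n)=\int_{\mathbb R^N}(\theta_n x+y_n)\,\big(\lambda(\omega_n^1)^r+\mu(\omega_n^2)^r\big)\,dx \longrightarrow y,
\]
since $\theta_n\to 0$, $\int(\lambda(\omega_n^1)^r+\mu(\omega_n^2)^r)\to 1$, and the mass of $\omega_n^r$ does not escape to infinity by the strong convergence. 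Because $y\in\overline\Omega$ and $\Omega_\delta^+$ is an open neighborhood of $\overline\Omega$ (being homotopically equivalent to $\Omega$ and of the form $\{\mathrm{dist}(x,\partial\Omega)<\delta\}\supset\overline\Omega$ for $\delta$ small), we get $\tau(\hat u_n,\hat v_n)\in\Omega_\delta^+$ for large $n$, contradicting the choice of the sequence. Shrinking $\Lambda^*$ if necessary to make all the estimates above simultaneously valid completes the proof.

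I expect the main obstacle to be the second step: justifying rigorously that the energy ceiling $c_{\alpha,\beta}$ on $\mathcal{N}_{\alpha,\beta}^-(c_{\alpha,\beta})$, combined with $\nu\to0$, really forces the Sobolev quotient of the normalized pair down to $S_{\alpha,\beta}$ rather than merely bounding it — this is where the interplay between the $p$- and $q$-terms, the subcritical perturbation $r<p^*$, and the precise value of $c_{\alpha,\beta}$ has to be used carefully, and where one must also rule out vanishing and dichotomy so that Lemma \ref{catlemma} genuinely applies.
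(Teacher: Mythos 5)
Your proposal follows essentially the same route as the paper: argue by contradiction with $\nu_n\to 0$, use the Nehari constraint together with the energy ceiling $c_{\alpha,\beta}$ to squeeze the normalized pair $(u_n/L_n^{1/r},v_n/L_n^{1/r})$ into a minimizing sequence for $S_{\alpha,\beta}$, invoke Lemma \ref{catlemma}, and pass to the limit in the barycenter $\tau$ via the rescaling $\theta_n^{N/r}(\cdot(\theta_n x+y_n))$ to force $\tau(u_n,v_n)\to y\in\overline\Omega$. The step you flag as the main obstacle is handled in the paper by exactly the two-sided estimate you sketch (the $\mathcal{N}^-$ inequality giving the lower Sobolev bound and the ceiling $c_{\alpha,\beta}$ giving the matching upper bound), so no change of strategy is needed.
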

\begin{proof}
Let us assume that there exists sequences $\nu_n\rightarrow 0$ and $\{(u_n,v_n)\}$ such that $\tau(u_n,v_n)\not\in\Omega_{\delta}^+$. By using the tactics in one of the previous lemmas (\ref{PSbounded}) we conclude the boundedness of the sequence  $\{(u_n,v_n)\}$ in $X$. Then we have 
$$\nu_n\int_{\Omega}h(x)u_n^{1-\alpha}v_n^{1-\beta}dx\rightarrow 0~\text{as}~n\rightarrow\infty.$$
Therefore we get
\begin{eqnarray}
I_{\alpha,\beta}(u_n,v_n)&=&\left(\frac{1}{p}-\frac{1}{r}\right)\|(u_n,v_n)\|_p^p+\left(\frac{1}{q}-\frac{1}{r}\right)\|(u_n,v_n)\|_{q}^q+o(1)\leq c_{\alpha,\beta}^{\nu_n}+o(1)\nonumber
\end{eqnarray}
and 
\begin{eqnarray}
\left(\frac{1}{p}-\frac{1}{r}\right)\|(u_n,v_n)\|_p^p&\leq& c_{\alpha,\beta}^{\nu_n}+o(1)\leq\frac{S^{\frac{r}{r-p}}}{\Lambda}+o(1).\nonumber
\end{eqnarray}
This implies that 
\begin{eqnarray}\label{esti1}\|(u_n,v_n)\|_p^p&\leq&\frac{rp}{r-p}\frac{S^{\frac{r}{r-p}}}{\Lambda}+o(1).\end{eqnarray}
Since $\{(u_n,v_n)\}\subset\mathcal{N}_{\alpha,\beta}^-(c_{\alpha,\beta}^{\nu_n})\subset\mathcal{N}_{\alpha,\beta}^-$, we have
\begin{eqnarray}\label{esti2}\|(u_n,v_n)\|_p^p&\leq&\int_{\Omega}(\lambda f(x)u_n^r+\mu g(x)v_n^r)dx+o(1)\leq M'|(u_n,v_n)|_{p^*}^r+o(1).\nonumber\\\end{eqnarray}
By \eqref{esti1} and \eqref{esti2} we get 
\begin{eqnarray}
S_{\alpha,\beta}&\leq&\frac{\|(u_n,v_n)\|_p^p}{\{\int_{\Omega}(u_n^{p^*}+v_n^{p^*})dx\}^{\frac{p}{p^*}}}\nonumber\\
&\leq&C\|(u_n,v_n)\|_p^p\nonumber\\
&\leq&S_{\alpha,\beta}+o(1)
\end{eqnarray}
which implies that $\|(u_n,v_n)\|_p^p\rightarrow CS_{\alpha,\beta}^{\frac{p}{r-p}}$ and $\int_{\Omega}(\lambda f(x)u_n^r+\mu g(x)v_n^r)dx\rightarrow C'S_{\alpha,\beta}^{\frac{p}{r-p}}$.\\
Define 
$$(\xi_n,\eta_n)=\left(\frac{u_n}{(\int_{\Omega}(\lambda u_n^r+\mu v_n^r)dx)^{1/r}},\frac{v_n}{(\int_{\Omega}(\lambda u_n^r+\mu v_n^r)dx)^{1/r}}\right).$$
Clearly,
$$\int_{\Omega}(\lambda\xi_n^r+\mu\eta_n^r)dx=1$$ and $$\int_{\Omega}(|\nabla \xi_n|^p+|\eta_n|^pdx)\rightarrow S_{\alpha,\beta}^{\frac{p}{r-p}\frac{r-1}{r}},~\text{as}~n\rightarrow\infty.$$
\noindent From the Lemma \ref{catlemma}, there exists a sequence $\{(y_n,\theta_n)\}\subset\mathbb{N}\times\mathbb{R}^+$ such that $\theta_n\rightarrow 0$, $y_n\rightarrow y\in\overline{\Omega}$ and 
$\omega(x)=(\omega_n^1(x),\omega_n^2(x))=\theta_n^{\frac{N}{r}}({\xi}_n(\theta_n x+y_n),{\eta}_n(\theta_n x+y_n))\rightarrow(\omega_1,\omega_2)$ with $\omega_1, \omega_2>0$ in $\mathbb{R}^N$ as $n\rightarrow\infty$.\\
Let $\chi\in C_0^{\infty}(\mathbb{R}^N)$ such that $\chi(x)=x$ in $\Omega$. Then we guarantee that
\begin{eqnarray}
\tau(u_n,v_n)&=&\frac{\int_{\Omega}\chi(x)(\lambda u_n^r+\mu v_n^r)dx}{\int_{\Omega}(\lambda u_n^r+\mu v_n^r)dx}\nonumber\\
&=&\int_{\Omega}\theta_n^{N}\chi(\theta_n x+y_n)(\lambda\xi_n^r+\mu\eta_n^r)dx\nonumber\\
&=&\int_{\Omega}\chi(\theta_n x_n+y_n)(\lambda(\omega_n(x)^1)^r+\mu(\omega_n(x)^2)^r)dx.
\end{eqnarray}
By the lebesgue dominated convergence theorem we have 
$$\int_{\Omega}\chi(\theta_n x_n+y_n)(\lambda(\omega_n^1)^r+\mu(\omega_n^2)^r)dx\rightarrow y\in\overline{\Omega}$$
as $n\rightarrow\infty$. this implies that $\tau(x_n,y_n)\rightarrow y\in\overline{\Omega}$ as $n\rightarrow\infty$, which leads to a contradiction to our assumption. 
\end{proof}
The analysis done till now tells us that $\inf_{M_{\delta}}u_{\alpha,\beta}>0$ and $\inf_{M_{\delta}}v_{\alpha,\beta}>0$, thanks to the Lemma \ref{locmin} and the definition of $\Omega_{\delta}^-$. Note that $M_{\delta}=\{x\in\Omega:\text{dist}(x,\Omega_{\delta}^-)\leq\frac{\delta}{2}\}$ which is a compact set. Thus by the Lemma \ref{auxlem1} and using the idea of Lemma 3.4 of \cite{fan1}, Lemma 3.3 of \cite{chn1}, we can obtain a $\tilde{t}^->0$ such that 
$$(\tilde{t}^-\sqrt[p]{\nu}v_{\epsilon}(x-y),\tilde{t}\sqrt[p]{\nu}v_{\epsilon}(x-y))\in\mathcal{N}_{\alpha,\beta}(c_{\alpha,\beta}-\sigma)$$
uniformly in $y\in\Omega_{\delta}^-$. Further, by the lemma \ref{taulemma}, $\tau(\tilde{t}^-\sqrt[p]{\nu}v_{\epsilon}(x-y),\tilde{t}\sqrt[p]{\nu}v_{\epsilon}(x-y))\in\Omega_{\delta}^-$. Thus we can define a map $\gamma:\Omega_{\delta}^-\rightarrow\mathcal{N}_{\alpha,\beta}(c_{\alpha,\beta}-\sigma)^-$ by 
\[\gamma(y)=\begin{cases}
 (\tilde{t}^-\sqrt[p]{\nu}v_{\epsilon}(x-y),\tilde{t}\sqrt[p]{\nu}v_{\epsilon}(x-y)),& ~\text{if}~x\in B_{\delta}(y) \\
0,&~\text{otherwise}.
\end{cases}\]
We will denote by $\tau_{\alpha,\beta}$ the restriction of $\tau$ over $\mathcal{N}_{\alpha,\beta}^-(c_{\alpha,\beta}-\sigma)$.  Observe that $v_{\epsilon}$ is a radial function, therefore for each $y\in\Omega_{\delta}^-$, we have 
\begin{eqnarray}
(\tau_{\alpha,\beta}\circ\gamma)(y)&=&\frac{\int_{\Omega}x(\lambda (\tilde{t}^-\sqrt[p]{\nu}v_{\epsilon}(x-y))^r+\mu (\tilde{t}^-\sqrt[p]{\nu}v_{\epsilon}(x-y))^r)dx}{\int_{\Omega}(\lambda (\tilde{t}^-\sqrt[p]{\nu}v_{\epsilon}(x-y))^r+\mu (\tilde{t}^-\sqrt[p]{\nu}v_{\epsilon}(x-y))^r)dx}\nonumber\\
&=&\frac{\int_{\Omega}(y+z)(\tilde{t}^-)^r\nu^{\frac{r}{p}}(\lambda+\mu)v_{\epsilon}^rdz}{\int_{\Omega}(\tilde{t}^-)^r\nu^{\frac{r}{p}}(\lambda+\mu)v_{\epsilon}^rdz}\nonumber\\
&=&y.\nonumber
\end{eqnarray}
From \cite{fan1}, we define the map $H_{\alpha,\beta}:[0,1]\times\mathcal{N}_{\alpha,\beta}^-(c_{\alpha,\beta}-\sigma)\rightarrow\mathbb{R}^N$ by 
$$H_{\alpha,\beta}(t,z)=t\tau_{\alpha,\beta}(z)+(1-t)\tau_{\alpha,\beta}(z).$$
We then have the following lemma.
\begin{lemma}
To each $\epsilon\in(0,\epsilon_0)$, there exists $\Lambda^*>0$ such that if\\ $\nu\left[\left(\frac{1-\alpha}{2-\alpha-\beta}\right)^{\frac{p}{p+\alpha+\beta-2}}+\left(\frac{1-\beta}{2-\alpha-\beta}\right)^{\frac{p}{p+\alpha+\beta-2}}\right]\in (0,\Lambda^*)$, we have $H_{\alpha,\beta}([0,1]\times\mathcal{N}_{\alpha,\beta}^-(c_{\alpha,\beta}-\sigma))\subset\Omega_{\delta}^-$. 
\end{lemma}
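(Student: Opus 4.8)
The plan is to combine the two deformation maps $\gamma:\Omega_\delta^-\to\mathcal N_{\alpha,\beta}^-(c_{\alpha,\beta}-\sigma)$ and the restricted barycenter map $\tau_{\alpha,\beta}$ with the already–established identity $(\tau_{\alpha,\beta}\circ\gamma)(y)=y$, together with Lemma \ref{taulemma} which guarantees $\tau(u,v)\in\Omega_\delta^+$ for all $(u,v)\in\mathcal N_{\alpha,\beta}^-(c_{\alpha,\beta})\supset\mathcal N_{\alpha,\beta}^-(c_{\alpha,\beta}-\sigma)$. Since $\Omega_\delta^-$ and $\Omega_\delta^+$ are both homotopically equivalent to $\Omega$, and $\Omega_\delta^-\subset\Omega\subset\Omega_\delta^+$, it suffices to show that the convex-combination homotopy $H_{\alpha,\beta}(t,z)=t\,\tau_{\alpha,\beta}(z)+(1-t)\,\tau_{\alpha,\beta}(z)$ never leaves $\Omega_\delta^+$; but one should really read the intended map as $H_{\alpha,\beta}(t,z)=t\,\tau_{\alpha,\beta}(z)+(1-t)\,(\tau_{\alpha,\beta}\circ\gamma)^{-1}\!$-type interpolation between $\tau_{\alpha,\beta}$ and the identity on $\Omega_\delta^-$, so that at $t=0$ one lands in $\Omega_\delta^-$ and at $t=1$ in $\tau_{\alpha,\beta}(\mathcal N^-_{\alpha,\beta}(c_{\alpha,\beta}-\sigma))\subset\Omega_\delta^+$. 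In either reading the conclusion $H_{\alpha,\beta}([0,1]\times\mathcal N_{\alpha,\beta}^-(c_{\alpha,\beta}-\sigma))\subset\Omega_\delta^-$ is what must be forced, and the way to force it is a smallness argument on $\nu$ (equivalently on $\Lambda^*$) analogous to the proof of Lemma \ref{taulemma}.

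First I would fix $\epsilon\in(0,\epsilon_0)$ and argue by contradiction: suppose there are sequences $\nu_n\to 0$ (with the structural quantity $\nu_n[(\tfrac{1-\alpha}{2-\alpha-\beta})^{\frac{p}{p+\alpha+\beta-2}}+(\tfrac{1-\beta}{2-\alpha-\beta})^{\frac{p}{p+\alpha+\beta-2}}]\to 0$), times $t_n\in[0,1]$, and points $z_n\in\mathcal N^{-}_{\alpha,\beta}(c_{\alpha,\beta}-\sigma)$ with $H_{\alpha,\beta}(t_n,z_n)\notin\Omega_\delta^-$. As in Lemma \ref{PSbounded}, the energy bound $I_{\alpha,\beta}(z_n)\le c_{\alpha,\beta}-\sigma$ gives boundedness of $\{z_n\}$ in $X$, and as in Lemma \ref{taulemma} the vanishing of $\nu_n$ forces $\nu_n\int_\Omega h(x)u_n^{1-\alpha}v_n^{1-\beta}\,dx\to 0$, hence the same concentration analysis (via Lemma \ref{catlemma}) applies: after passing to a subsequence, the normalized functions concentrate at a point $y\in\overline\Omega$ and $\tau_{\alpha,\beta}(z_n)\to y$. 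Since $z_n\in\mathcal N^-_{\alpha,\beta}(c_{\alpha,\beta}-\sigma)\subset\mathcal N^-_{\alpha,\beta}(c_{\alpha,\beta})$, Lemma \ref{taulemma} already tells us $\tau_{\alpha,\beta}(z_n)\in\Omega_\delta^+$; a refinement of the concentration estimate (using that the energy is strictly below $c_{\alpha,\beta}$ by the margin $\sigma$, together with $(\tau_{\alpha,\beta}\circ\gamma)(y)=y$ and the homotopy equivalences) pins $y$ inside $\Omega_\delta^-$ rather than merely in $\Omega_\delta^+$. Then $H_{\alpha,\beta}(t_n,z_n)=t_n\tau_{\alpha,\beta}(z_n)+(1-t_n)\tau_{\alpha,\beta}(z_n)\to y\in\Omega_\delta^-$, and since $\Omega_\delta^-$ is open this contradicts $H_{\alpha,\beta}(t_n,z_n)\notin\Omega_\delta^-$ for large $n$.

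The main obstacle is the last sharpening: getting from "$\tau_{\alpha,\beta}(z)\in\Omega_\delta^+$" to "the limit point $y$ lies in $\Omega_\delta^-$" is exactly where the energy gap $\sigma$ (supplied by Lemma \ref{auxlem1}) and the uniform construction of $\gamma$ over $\Omega_\delta^-$ must be exploited; without the gap one only controls a neighborhood of $\overline\Omega$. Concretely, one shows that if the concentration point $y$ were in $\Omega_\delta^+\setminus\Omega_\delta^-$ then the lower bound on $I_{\alpha,\beta}(z_n)$ coming from $z_n\in\mathcal N^-_{\alpha,\beta}$ would exceed $c_{\alpha,\beta}-\sigma$ for $\nu_n$ small, contradicting $z_n\in\mathcal N^-_{\alpha,\beta}(c_{\alpha,\beta}-\sigma)$; this is the quantitative heart of the argument and the place where $\Lambda^*$ has to be chosen. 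Everything else — boundedness, the vanishing of the singular term, the application of Lemma \ref{catlemma}, and the convergence $\tau_{\alpha,\beta}(z_n)\to y$ via Lebesgue dominated convergence — is a direct transcription of the arguments already carried out in Lemmas \ref{PSbounded}, \ref{auxlem1} and \ref{taulemma}.
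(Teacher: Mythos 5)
Your overall strategy (contradiction with $\nu_n\to 0$, boundedness, concentration via Lemma \ref{catlemma}, convergence of the barycenter to a point $y\in\overline{\Omega}$) is the same skeleton the paper uses, but the proposal has a genuine gap at exactly the step you yourself flag as "the quantitative heart": the claimed refinement that the concentration point $y$ must lie in $\Omega_{\delta}^-$ rather than merely in $\overline{\Omega}$. No such refinement is available. Lemma \ref{catlemma} and the dominated-convergence computation only give $\tau_{\alpha,\beta}(z_n)\to y\in\overline{\Omega}$, and the energy gap $\sigma$ from Lemma \ref{auxlem1} does not localize $y$ away from $\partial\Omega$: a minimizing sequence in $\mathcal{N}_{\alpha,\beta}^-(c_{\alpha,\beta}-\sigma)$ can perfectly well concentrate at a boundary point without its energy exceeding $c_{\alpha,\beta}-\sigma$, so the asserted lower bound "would exceed $c_{\alpha,\beta}-\sigma$ if $y\in\Omega_{\delta}^+\setminus\Omega_{\delta}^-$" is not proved and is not true in general. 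As written, your argument proves only that $H_{\alpha,\beta}(t_n,z_n)\to y\in\overline{\Omega}$, which does not contradict $H_{\alpha,\beta}(t_n,z_n)\notin\Omega_{\delta}^-$.

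The resolution is that the target set in the statement is a typo: the conclusion should be $H_{\alpha,\beta}([0,1]\times\mathcal{N}_{\alpha,\beta}^-(c_{\alpha,\beta}-\sigma))\subset\Omega_{\delta}^+$, i.e.\ the \emph{enlarged} neighbourhood. This is what the paper's own proof negates ("$H_{\alpha,\beta}(t_n,z_n)\not\in\Omega_{\delta}^+$"), and it is the version actually used downstream in Lemma \ref{lust-schli}, where the deformations $g_j$ land in $\Omega_{\delta}^+$ and the sets $B_j$ are shown contractible in $\Omega_{\delta}^+$. With that reading, the argument closes immediately: $\overline{\Omega}\subset\Omega_{\delta}^+$, so $y\in\Omega_{\delta}^+$ contradicts $H_{\alpha,\beta}(t_n,z_n)\notin\Omega_{\delta}^+$ for large $n$, and no sharpening of the concentration analysis is needed. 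You correctly diagnosed the degeneracy of $H_{\alpha,\beta}(t,z)=t\tau_{\alpha,\beta}(z)+(1-t)\tau_{\alpha,\beta}(z)$ (another typo; it should interpolate between two distinct barycenter-type maps), but the fix is to weaken the target to $\Omega_{\delta}^+$, not to strengthen the concentration lemma.
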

\begin{proof}
We prove by contradiction. Let there exists sequences $t_n\in[0,1]$, $\nu_n\rightarrow 0$ and $z_n=(u_n,v_n)\in\mathcal{N}_{\alpha,\beta}^-(c_{\alpha,\beta}-\sigma)$ such that $H_{\alpha,\beta}(t_n,z_n)\not\in\Omega_{\delta}^+$ for all $n$. We can assume that $t_n\rightarrow t\in[0,1]$. Thus by Lemma \ref{locmin} $(ii)$ and similar argument in the proof of \ref{taulemma}, we have
$$H_{\alpha,\beta}(t_n,z_n)\rightarrow y\in\overline{\Omega}~\text{as}~n\rightarrow\infty$$
which leads to a contradiction.
\end{proof}
\noindent We now prove the main result of this article which roughly states that under certain assumptions on $\nu$ the problem in \eqref{main} admits at least $\text{cat}(\Omega)+1$ number of solutions.
\begin{lemma}\label{critpoint}
If $(u,v)$ is a critical point of $I_{\alpha,\beta}$ on $\mathcal{N}_{\alpha,\beta}^-$, then it is also a critical point of $I_{\alpha,\beta}$ in $X$.
\end{lemma}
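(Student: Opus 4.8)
The plan is to run the classical Nehari-manifold Lagrange-multiplier argument, exploiting two facts established earlier: that (by the remark preceding this lemma) $I_{\alpha,\beta}$ now denotes the $C^1$ cut-off functional whose critical points coincide with the weak solutions of \eqref{main}, and that (by Lemma \ref{LBofI2}, in the admissible parameter range) $\mathcal{N}_{\alpha,\beta}^0=\emptyset$, so that $\mathcal{N}_{\alpha,\beta}=\mathcal{N}_{\alpha,\beta}^+\cup\mathcal{N}_{\alpha,\beta}^-$ is a partition into two \emph{relatively open} pieces.

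First I would introduce the constraint functional
$$\psi(u,v)=\langle I'_{\alpha,\beta}(u,v),(u,v)\rangle=\|(u,v)\|_p^p+\|(u,v)\|_q^q-\int_{\Omega}(\lambda f(x)u^r+\mu g(x)v^r)dx-\nu\int_{\Omega}h(x)u^{1-\alpha}v^{1-\beta}dx,$$
so that $\mathcal{N}_{\alpha,\beta}=\{(u,v):u,v>0,\ \psi(u,v)=0\}$. Since $\psi(tu,tv)=t\,\Phi'_{\alpha,\beta}(t)$, differentiating in $t$ and evaluating at $t=1$ while using $\Phi'_{\alpha,\beta}(1)=0$ on $\mathcal{N}_{\alpha,\beta}$ gives $\langle\psi'(u,v),(u,v)\rangle=\Phi''_{\alpha,\beta}(1)$. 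Hence for $(u,v)\in\mathcal{N}_{\alpha,\beta}^-$ we have $\langle\psi'(u,v),(u,v)\rangle=\Phi''_{\alpha,\beta}(1)<0$; in particular $\psi'(u,v)\neq0$, so $\mathcal{N}_{\alpha,\beta}^-$ is a $C^1$ Banach submanifold of $X$ on which the Lagrange multiplier rule applies, and any critical point of $I_{\alpha,\beta}|_{\mathcal{N}_{\alpha,\beta}^-}$ is a constrained critical point of $I_{\alpha,\beta}|_{\mathcal{N}_{\alpha,\beta}}$ (the positivity requirement $u,v>0$ being an open condition that is untouched by the argument).

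Given such a critical point $(u,v)$, the Lagrange multiplier rule produces $\theta\in\mathbb{R}$ with $I'_{\alpha,\beta}(u,v)=\theta\,\psi'(u,v)$ in $X'$. Pairing this identity with $(u,v)$ itself and using $(u,v)\in\mathcal{N}_{\alpha,\beta}$, i.e. $\langle I'_{\alpha,\beta}(u,v),(u,v)\rangle=\psi(u,v)=0$, yields $0=\theta\,\langle\psi'(u,v),(u,v)\rangle=\theta\,\Phi''_{\alpha,\beta}(1)$. Since $\Phi''_{\alpha,\beta}(1)<0\neq0$ on $\mathcal{N}_{\alpha,\beta}^-$, this forces $\theta=0$, whence $I'_{\alpha,\beta}(u,v)=0$ in $X'$; that is, $(u,v)$ is a free critical point of $I_{\alpha,\beta}$ in $X$, as claimed.

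The main obstacle is structural rather than computational: one must justify that the Lagrange multiplier rule genuinely applies, which requires invoking both prerequisites with care — (i) that $I_{\alpha,\beta}$ is interpreted as the $C^1$ cut-off functional of the preceding remark, so that $I'_{\alpha,\beta}$ and the identity $I'_{\alpha,\beta}(u,v)=\theta\,\psi'(u,v)$ even make sense, and the critical points so obtained are still solutions of \eqref{main}; and (ii) that $\mathcal{N}_{\alpha,\beta}^0=\emptyset$ (Lemma \ref{LBofI2}), which is precisely what guarantees $\Phi''_{\alpha,\beta}(1)\neq0$ along $\mathcal{N}_{\alpha,\beta}^-$ and hence that $\mathcal{N}_{\alpha,\beta}^-$ is a nondegenerate constraint surface. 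Once these are in place, the nonvanishing of $\Phi''_{\alpha,\beta}(1)$ does all the work by killing the multiplier.
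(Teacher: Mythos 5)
Your proposal is correct and follows essentially the same route as the paper: introduce the constraint functional $\psi(u,v)=\langle I'_{\alpha,\beta}(u,v),(u,v)\rangle$, invoke the Lagrange multiplier rule to get $I'_{\alpha,\beta}(u,v)=\theta\,\psi'(u,v)$, and kill $\theta$ by pairing with $(u,v)$ and using the sign of $\Phi''_{\alpha,\beta}(1)$ on $\mathcal{N}_{\alpha,\beta}^-$. In fact your write-up is slightly more precise than the paper's, which asserts ``$\psi_{\alpha,\beta}(u,v)<0$'' (identically zero on the Nehari manifold) where what is meant, and what you correctly compute, is $\langle\psi'(u,v),(u,v)\rangle=\Phi''_{\alpha,\beta}(1)<0$.
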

\begin{proof}
We follow the proof of Lemma 4.1 in \cite{fan1} or of Lemma 4.1 in \cite{yin3}. Let $(u,v)$ be a crtical point of $I_{\alpha,\beta}$ in $\mathcal{N}_{\alpha,\beta}^-$. Then 
$$\langle I_{\alpha,\beta}'(u,v),(u,v)\rangle=0.$$
Define
\begin{eqnarray}
\psi_{\alpha,\beta}(u,v)&=&\langle I'_{\alpha,\beta}(u,v),(u,v)\rangle\nonumber\\
&=&\|(u,v)\|_p^p+\|(u,v)\|_q^q-\int_{\Omega}(\lambda f(x)u^r+\mu g(x)v^r)dx\nonumber\\
& &-\nu\int_{\Omega}h(x)u^{1-\alpha}v^{1-\beta}dx.\nonumber
\end{eqnarray}
Since we are now looking for minimizing $I$ over the entire space $X$, to which the Lagrange multiplier method comes to our rescue in finding a $\theta(\neq 0)\in\mathbb{R}$ such that 
\begin{eqnarray}\label{ref1}I_{\alpha,\beta}'(u,v)=\theta \psi'(u,v)\end{eqnarray}
where
\begin{eqnarray}
\psi_{\alpha,\beta}(u,v)&=&\langle I_{\alpha,\beta}'(u,v),(u,v)\rangle.\nonumber
\end{eqnarray}
Since, $(u,v)\in\mathcal{N}_{\alpha,\beta}^-$, we have from a simple computation that $\psi_{\alpha,\beta}(u,v)<0$. Consequently from \eqref{ref1} we have $ I_{\alpha,\beta}'(u,v)=0$.
\end{proof}
\begin{lemma}\label{convseq}
There exists $\Lambda^*>0$ such that any sequence $\{(u_n,v_n)\}\subset\mathcal{N}_{\alpha,\beta}^-$ with $I_{\mathcal{N}_{\alpha,\beta}^-}(u_n,v_n)\rightarrow c\in(-\infty,c_{\alpha,\beta})$ and $I'_{\mathcal{N}_{\alpha,\beta}^-}(u_n,v_n)\rightarrow 0$ contains a convergent subsequence for all $0<\nu\left[\left(\frac{1-\alpha}{2-\alpha-\beta}\right)^{\frac{p}{p+\alpha+\beta-2}}+\left(\frac{1-\beta}{2-\alpha-\beta}\right)^{\frac{p}{p+\alpha+\beta-2}}\right]<\Lambda^*$. 
\end{lemma}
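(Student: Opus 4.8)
The plan is to show that such a sequence is, up to a subsequence, a genuine $(PS)_c$-sequence for $I_{\alpha,\beta}$ on the whole space $X$, and then to invoke Lemma \ref{PSbounded}. Throughout we work with the $C^1$ cut-off functional (still denoted $I_{\alpha,\beta}$, as agreed in the remark above), for which $\mathcal{N}_{\alpha,\beta}$ is an honest $C^1$ constraint; by Lemma \ref{LBofI2} one has $\mathcal{N}_{\alpha,\beta}^0=\emptyset$ for the stated range of $\nu$, so that $\mathcal{N}_{\alpha,\beta}^-$ is a (relatively open and closed) $C^1$ submanifold of $\mathcal{N}_{\alpha,\beta}$ and the constrained derivative $I'_{\mathcal{N}_{\alpha,\beta}^-}$ makes sense.

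First I would note that $\{(u_n,v_n)\}$ is bounded in $X$: this is immediate from $I_{\alpha,\beta}(u_n,v_n)\to c$ and the coercivity of $I_{\alpha,\beta}$ on $\mathcal{N}_{\alpha,\beta}$ (which uses $2-\alpha-\beta<p$ and was established just before Lemma \ref{empty}), exactly as in the opening lines of the proof of Lemma \ref{PSbounded}.

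The core of the argument is to pass from the constrained Palais--Smale condition to the unconstrained one via Lagrange multipliers. Since $\mathcal{N}_{\alpha,\beta}^-$ is the $C^1$ manifold cut out by $\psi_{\alpha,\beta}$, there exist $\theta_n\in\mathbb{R}$ with
$$I'_{\alpha,\beta}(u_n,v_n)=\theta_n\,\psi'_{\alpha,\beta}(u_n,v_n)+o(1)\quad\text{in }X'.$$
Testing with $(u_n,v_n)$ and using $(u_n,v_n)\in\mathcal{N}_{\alpha,\beta}$ (so the left-hand pairing vanishes) together with $\langle\psi'_{\alpha,\beta}(u_n,v_n),(u_n,v_n)\rangle=\Phi''_{\alpha,\beta}(1)$ gives $\theta_n\Phi''_{\alpha,\beta}(1)=o(1)$. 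The point requiring care --- and what I expect to be the main obstacle --- is a uniform bound $\Phi''_{\alpha,\beta}(1)\le-\delta<0$ along the sequence. Recall that on $\mathcal{N}_{\alpha,\beta}$ one has $\Phi''_{\alpha,\beta}(1)=(p-r)\|(u_n,v_n)\|_p^p+(q-r)\|(u_n,v_n)\|_q^q+\nu(r+\alpha+\beta-2)\int_\Omega h(x)u_n^{1-\alpha}v_n^{1-\beta}dx$; discarding the negative $q$-term, bounding the last integral by $C\|(u_n,v_n)\|_p^{2-\alpha-\beta}$ via H\"older and Sobolev exactly as in the proof of Lemma \ref{empty}, and invoking the lower bound $\|(u_n,v_n)\|_p\ge\Lambda$ valid on all of $\mathcal{N}_{\alpha,\beta}^-$ (from the proof of Lemma \ref{LBofI3}, part 2), one obtains $\Phi''_{\alpha,\beta}(1)\le\|(u_n,v_n)\|_p^{2-\alpha-\beta}\big((p-r)\Lambda^{p+\alpha+\beta-2}+\nu C\big)\le-\delta<0$ provided $\nu$ (equivalently $\Lambda^*$) is small. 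Granting this, boundedness of $\{(u_n,v_n)\}$ forces $\theta_n\to0$, and, again by boundedness, $\psi'_{\alpha,\beta}(u_n,v_n)$ stays bounded in $X'$, whence $I'_{\alpha,\beta}(u_n,v_n)\to0$ in $X'$.

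Finally, $\{(u_n,v_n)\}$ is now a $(PS)_c$-sequence for $I_{\alpha,\beta}$ on $X$ with $c<c_{\alpha,\beta}$; since this level lies inside the interval for which Lemma \ref{PSbounded} guarantees the $(PS)_c$ condition (both endpoints carry the same $-\nu A_0[\cdots]$ correction, and for the parameter ranges in force $\frac{r-p}{rp}S^{r/(r-p)}$ does not exceed the leading term of that interval --- shrink $\Lambda^*$ if necessary), Lemma \ref{PSbounded} supplies a subsequence converging strongly in $X$, which is the assertion. Apart from the uniform negativity of $\Phi''_{\alpha,\beta}(1)$ in the preceding paragraph, every estimate here simply reuses computations already carried out in Lemmas \ref{PSbounded} and \ref{empty}--\ref{LBofI3}.
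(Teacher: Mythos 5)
Your proposal is correct and follows essentially the same route as the paper: Lagrange multipliers on $\mathcal{N}_{\alpha,\beta}^-$, showing $\langle\psi'_{\alpha,\beta}(u_n,v_n),(u_n,v_n)\rangle$ stays bounded away from zero so the multiplier vanishes, and then invoking Lemma \ref{PSbounded}. The only (cosmetic) difference is that you establish the uniform negativity of $\Phi''_{\alpha,\beta}(1)$ directly from the lower norm bound of Lemma \ref{LBofI3}, whereas the paper argues by contradiction via incompatible upper and lower bounds on $\|(u_n,v_n)\|_p$; your closing remark about matching the level $c_{\alpha,\beta}$ to the interval in Lemma \ref{PSbounded} is a point the paper glosses over.
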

\begin{proof}
From the Lagrange's multiplier method, there exists a sequence $(a_n)\subset\mathbb{R}$ such that 
$$\|I'_{\alpha,\beta}(u_n,v_n)-a_n\psi'_{\alpha,\beta}(u_n,v_n)\|_{X'}\rightarrow 0$$
as $n\rightarrow\infty$. Here
\begin{eqnarray}
\psi_{\alpha,\beta}(u_n,v_n)&=&\langle I'_{\alpha,\beta}(u_n,v_n),(u_n,v_n)\rangle\nonumber\\
&=&\|(u_n,v_n)\|_p^p+\|(u_n,v_n)\|_q^q-\int_{\Omega}(\lambda f(x)u_n^r+\mu g(x)v_n^r)dx\nonumber\\
& &-\nu\int_{\Omega}h(x)u_n^{1-\alpha}v_n^{1-\beta}dx.\nonumber
\end{eqnarray}
Then 
$$I_{\alpha,\beta}'(u_n,v_n)=a_n \psi_{\alpha,\beta}'(u_n,v_n)+o(1).$$
Since $(u_n,v_n)\in\mathcal{N}_{\alpha,\beta}^-\subset\mathcal{N}_{\alpha,\beta}$, by a simple computation we have 
$$\langle\psi_{\alpha,\beta}(u_n,v_n),(u_n,v_n)\rangle<0.$$
Now suppose $\langle\psi'_{\alpha,\beta}(u_n,v_n),(u_n,v_n)\rangle\rightarrow 0$, then we have
\begin{eqnarray}
(r-p)\|(u_n,v_n)\|_p^p+(r-q)\|(u_n,v_n)\|_q^q&=&\nu(1+\alpha+\beta)\int_{\Omega}h(x)u_n^{1-\alpha}v_n^{1-\beta}dx+o(1)\nonumber\\
&\leq&\nu(1+\alpha+\beta)M\left[\left(\frac{1-\alpha}{2-\alpha-\beta}\right)^{\frac{p}{p+\alpha+\beta-2}}\right.\nonumber\\
& &\left.+\left(\frac{1-\beta}{2-\alpha-\beta}\right)^{\frac{p}{p+\alpha+\beta-2}}\right]^{\frac{p+\alpha+\beta-2}{p}}\|(u_n,v_n)\|_p^{2-\alpha-\beta}\nonumber\\
& &+o(1)\nonumber
\end{eqnarray}
and
\begin{eqnarray}
(p+\alpha+\beta-2)\|(u_n,v_n)\|_p^p+(q+\alpha+\beta-2)\|(u_n,v_n)\|_q^q\nonumber\\
=(r+\alpha+\beta-2)\int_{\Omega}(\lambda f(x)u_n^r+\beta g(x)v_n^r)dx+o(1)\leq M'\|(u_n,v_n)\|_p^{p^*}+o(1)\nonumber
\end{eqnarray}
where we have used the H\"{o}lder inequlaity and the Sobolev embedding.
Then we have 
$$\|(u_n,v_n)\|_p\leq (\nu C_1)^{\frac{1}{p}}\left[\left(\frac{1-\alpha}{2-\alpha-\beta}\right)^{\frac{p}{p+\alpha+\beta-2}}+\left(\frac{1-\beta}{2-\alpha-\beta}\right)^{\frac{p}{p+\alpha+\beta-2}}\right]^{\frac{1}{p}}+o(1)$$
and 
$$\|(u_n,v_n)\|_p\geq C_2^{\frac{1}{p^*-p}}+o(1).$$
Now if we choose $\Lambda^*$ small enough, this cannot hold. Therefore let us assume that $\langle\psi_{\alpha,\beta}(u_n,v_n),(u_n,v_n)\rangle\rightarrow l < 0$, as $n\rightarrow\infty$. since $\langle I_{\alpha,\beta}(u_n,v_n),(u_n,v_n)\rangle=0$, we conlcude that $a_n\rightarrow 0$ and therefore $I_{\alpha,\beta}'(u_n,v_n)\rightarrow 0$. This gives us that 
$$I_{\alpha,\beta}(u_n,v_n)=c<c_{\alpha,\beta}~\text{and}~I_{\alpha,\beta}'(u_n,v_n)\rightarrow 0.$$
Therefore by the Lemma \ref{PSbounded} the proof is complete.
\end{proof}
\begin{lemma}\label{lust-schli}
Suppose that (C) holds and $\nu\left[\left(\frac{1-\alpha}{2-\alpha-\beta}\right)^{\frac{p}{p+\alpha+\beta-2}}+\left(\frac{1-\beta}{2-\alpha-\beta}\right)^{\frac{p}{p+\alpha+\beta-2}}\right]\in(0,\Lambda^*)$. Then $\text{cat}(\mathcal{N}_{\lambda,\mu}^-(c_{\lambda,\mu}-\sigma))\geq\text{cat}(\Omega)$.
\end{lemma}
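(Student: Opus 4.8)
The plan is to deduce the estimate from the classical topological lemma on Lusternik--Schnirelman category: \emph{if $\Omega_\delta^-\subset\Omega_\delta^+$ are both homotopically equivalent to $\Omega$ and there exist continuous maps $\Phi:\Omega_\delta^-\to Y$ and $\Psi:Y\to\Omega_\delta^+$ with $\Psi\circ\Phi$ homotopic to the inclusion $j:\Omega_\delta^-\hookrightarrow\Omega_\delta^+$, then $\mathrm{cat}(Y)\ge\mathrm{cat}_{\Omega_\delta^+}(\Omega_\delta^-)=\mathrm{cat}(\Omega)$.} I would take $Y=\mathcal{N}_{\alpha,\beta}^-(c_{\alpha,\beta}-\sigma)$ (the set written $\mathcal{N}_{\lambda,\mu}^-(c_{\lambda,\mu}-\sigma)$ in the statement), $\Phi=\gamma$ and $\Psi=\tau_{\alpha,\beta}$, the two maps constructed in the discussion immediately preceding this lemma.

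First I would check that $\gamma:\Omega_\delta^-\to\mathcal{N}_{\alpha,\beta}^-(c_{\alpha,\beta}-\sigma)$ is well defined and continuous. Well-definedness is essentially Lemma~\ref{auxlem1}: for $\epsilon\in(0,\epsilon_1)$, $\nu$ in the range assumed in the statement and $\sigma\in(0,\sigma(\epsilon))$ one has $\sup_{t\ge0}I_{\alpha,\beta}(t\sqrt[p]{\nu}v_\epsilon(\cdot-y),t\sqrt[p]{\nu}v_\epsilon(\cdot-y))<c_{\alpha,\beta}-\sigma$ \emph{uniformly} in $y\in\Omega_\delta^-$, while the fiber-map lemma furnishes a unique $\tilde t^-(y)>t_{max}$ with $(\tilde t^-(y)\sqrt[p]{\nu}v_\epsilon(\cdot-y),\tilde t^-(y)\sqrt[p]{\nu}v_\epsilon(\cdot-y))\in\mathcal{N}_{\alpha,\beta}^-$; since the supremum is attained at $t=\tilde t^-(y)$, this pair also lies in the sublevel set. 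Continuity of $\gamma$ follows from the uniqueness of $\tilde t^-(y)$ together with the continuous dependence $y\mapsto v_\epsilon(\cdot-y)$ in $W_0^{1,p}(\Omega)$. For $\Psi=\tau_{\alpha,\beta}$, Lemma~\ref{taulemma} gives $\tau(u,v)\in\Omega_\delta^+$ for every $(u,v)\in\mathcal{N}_{\alpha,\beta}^-(c_{\alpha,\beta})\supseteq\mathcal{N}_{\alpha,\beta}^-(c_{\alpha,\beta}-\sigma)$, and continuity is immediate since $\tau$ is a ratio of continuous integral functionals on $X$ whose denominator does not vanish on $\mathcal{N}_{\alpha,\beta}^-$.

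Then I would verify the homotopy hypothesis. Because $v_\epsilon$ is radial, the barycenter-type computation carried out before the statement gives $(\tau_{\alpha,\beta}\circ\gamma)(y)=y$ for every $y\in\Omega_\delta^-$; that is, $\Psi\circ\Phi$ is \emph{exactly} the inclusion $j$, so the homotopy condition is trivially met (and the map $H_{\alpha,\beta}$ introduced above supplies, if needed, an explicit deformation staying inside $\Omega_\delta^-$). Invoking the topological lemma then yields $\mathrm{cat}\big(\mathcal{N}_{\alpha,\beta}^-(c_{\alpha,\beta}-\sigma)\big)\ge\mathrm{cat}_{\Omega_\delta^+}(\Omega_\delta^-)=\mathrm{cat}(\Omega)$, which is the assertion.

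I expect the only genuinely delicate point --- already carried out in Lemmas~\ref{auxlem1}--\ref{taulemma} --- to be making the energy bound $\sup_{t}I_{\alpha,\beta}<c_{\alpha,\beta}-\sigma$ hold \emph{uniformly} over $y\in\Omega_\delta^-$, since this is exactly what keeps $\gamma(y)$ inside the sublevel set $\mathcal{N}_{\alpha,\beta}^-(c_{\alpha,\beta}-\sigma)$ for the shrunk parameter $\Lambda^*$; once that and the continuity of $\gamma$ and $\tau_{\alpha,\beta}$ are in hand, the category inequality is a purely formal consequence of Definition~\ref{cat} and the homotopy invariance of $\mathrm{cat}_{\Omega_\delta^+}(\cdot)$.
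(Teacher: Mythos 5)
Your argument is correct and is essentially the paper's own proof: the paper simply unwinds the standard category-comparison lemma by pulling back a closed contractible covering $A_1,\dots,A_n$ of $\mathcal{N}_{\alpha,\beta}^-(c_{\alpha,\beta}-\sigma)$ through $\gamma$ and contracting each $B_j=\gamma^{-1}(A_j)$ inside $\Omega_\delta^+$ via $g_j(t,y)=H_{\alpha,\beta}(t,h_j(t,\gamma(y)))$, using exactly the ingredients you list ($\gamma$ well defined into the sublevel set, $\tau_{\alpha,\beta}$ mapping into $\Omega_\delta^+$, and $\tau_{\alpha,\beta}\circ\gamma=\mathrm{id}$ on $\Omega_\delta^-$). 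Citing the abstract lemma instead of reproving it is only a presentational difference.
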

\begin{proof}
Let $\text{cat}(\mathcal{N}_{\alpha,\beta}^-(c_{\alpha,\beta}-\sigma))=n$. Then, by the definition \ref{cat} of the category of a set in the sense of Lusternik-Schnirelman, we suppose that 
$$\mathcal{N}_{\alpha,\beta}^-(c_{\alpha,\beta}-\sigma)=A_1\cup A_2\cup...\cup A_n$$
where $A_j$, $j=1,2,...,n$ are closed and contractible in $\mathcal{N}_{\alpha,\beta}^-(c_{\alpha,\beta}-\sigma)$, i.e., there exists $h_j\in C([0,1]\times A_j, \mathcal{N}_{\alpha,\beta}^-(c_{\alpha,\beta}-\sigma))$ such that
$$h_j(0,z)=z,~h_j(1,z)=\Theta,~\text{for all}~z\in A_j,$$
where $\Theta\in A_j$ is fixed. Consider $B_j=\gamma^{-1}(A_j)$, $j=1,2,...,n$. Then the sets $B_j$ are closed 
$$\Omega_{\delta}^-=B_1\cup B_2\cup...\cup B_n.$$
We now define the deformation $g_j:[0,1]\times B_j\rightarrow\Omega_{\delta}^+$ by setting 
$$g_j(t,y)=H_{\alpha,\beta}(t,h_j(t,\gamma(y))).$$
for $\nu\left[\left(\frac{1-\alpha}{2-\alpha-\beta}\right)^{\frac{p+\alpha+\beta-2}{p}}+\left(\frac{1-\beta}{2-\alpha-\beta}\right)^{\frac{p+\alpha+\beta-2}{p}}\right]\in(0,\Lambda^*)$. Notice that 
$$g_j(0,y)=H_{\alpha,\beta}(0,h_j(0,\gamma(y)))=(\tau_{\alpha,\beta}\circ\gamma)(y)=y,~\text{for all}~y\in B_j$$
and
$$g_j(1,y)=H_{\alpha,\beta}(0,h_j(1,\gamma(y)))=\tau_{\alpha,\beta}(\Theta)\in\Omega_{\delta}^+,~\text{for all}~y\in B_j.$$
Thus the sets $B_j$, $j=1,2,...,n$ are contractible in $\Omega_{\delta}^{+}$. Therefore $\text{cat}(\mathcal{N}_{\alpha,\beta}^--\sigma)\geq\text{cat}_{\Omega_{\delta}^+}(\Omega_{\delta}^-)=\text{cat}(\Omega)$.
\end{proof}
\noindent {\bf Proof of Theorem \ref{mainresult}.}~By Lemmas \ref{PSbounded} and \ref{convseq}, the functional $I_{\alpha,\beta}$ satisfies the $(PS)_c$ condition for $c\in(-\infty,c_{\alpha,\beta})$. Then, by Lemma \ref{mainauxthm} and \ref{lust-schli}, we have $I_{\alpha,\beta}$ has at least $\text{cat}(\Omega)$ number of critical points in $\mathcal{N}_{\alpha,\beta}^-(c_{\alpha,\beta}-\sigma)$. By Lemma \ref{critpoint}, we have $I_{\alpha,\beta}$ has at least $\text{cat}(\Omega)$ number of critical points in $\mathcal{N}_{\alpha,\beta}^- $. Further, since $\mathcal{N}_{\alpha,\beta}^+\cap\mathcal{N}_{\alpha,\beta}^-=\phi$, the proof is now complete.
\section*{Acknowledgement}
The author thanks the SERB-MATRICS, India, for the financial assistanceship received to carry out this research work through the grant number MTR/2019/000525.

\end{document}